\renewcommand{\todo}[1]{\vspace{5 mm}\par \noindent
\marginpar{\textsc{ToDo}}
\framebox{\begin{minipage}[c]{.99 \textwidth}
\tt #1 \end{minipage}}\vspace{5 mm}\par}
\def\unprotectedboldentry#1{\textcolor{Red}{\textbf{#1}}}
\def\boldentry{\protect\unprotectedboldentry}
\newcommand{\tikztableau}[2][scale=0.6,every node/.style={font=\small}]{
    \def\newtableau{#2}
    \begin{array}{c}
    \begin{tikzpicture}[#1]
    \coordinate (x) at (-0.5,0.5);
    \coordinate (y) at (-0.5,0.5);
    \foreach \row in \newtableau {
        \coordinate (x) at ($(x)-(0,1)$);
        \coordinate (y) at (x);
        \draw ($(y)+(0.5,0.5)$) rectangle +(0,-1);
        \foreach \entry in \row {
            \ifthenelse{\equal{\entry}{X}}
            {
                \node (y) at ($(y) + (1,0)$) {};
                \fill[color=gray!10] ($(y)-(0.5,0.5)$) rectangle +(1,1);
                \draw[color=gray] ($(y)-(0.5,0.5)$) rectangle +(1,1);
            }
            {
                \ifthenelse{\equal{\entry}{\boldentry X}}
                {
                    \node (y) at ($(y) + (1,0)$) {};
                    \fill[color=gray] ($(y)-(0.5,0.5)$) rectangle +(1,1);
                    \draw ($(y)-(0.5,0.5)$) rectangle +(1,1);
                 }
                 {
                    \node (y) at ($(y) + (1,0)$) {\entry};
                    \draw ($(y)-(0.5,0.5)$) rectangle +(1,1);
                 }
            }
            }
        }
    \end{tikzpicture}
    \end{array}}
\newcommand{\tikztableausmall}[1]{\tikztableau[scale=0.45,every node/.style={font=\rm\small}]{#1}}
\newcommand{\tikztableautiny}[1]{\tikztableau[scale=0.35,every node/.style={font=\rm\tiny}]{#1}}
\newcommand{\tikztableaunano}[1]{\tikztableau[scale=0.30,every node/.style={font=\rm\tiny}]{#1}}
\def\ZZ{\mathbb{Z}}
\def\BB{\mathbb{B}}
\def\la{{\lambda}}
\def\Des{\operatorname{Des}}
\def\op{{[}}
\def\opp{{[}} 
\def\cl{{]}}
\def\clp{{]}} 
\def\read{\operatorname{\rm{read}}}
\def\sym{\operatorname{\mathsf{Sym}}}
\def\Qsym{\operatorname{\mathsf{QSym}}}
\def \fS{{\mathfrak S}}
\def \HH{{H}}
\def\Nsym{\operatorname{\mathsf{NSym}}}
\def\Ys{{Y}}
\def\sort{\operatorname{sort}}
\def\Tal{{\mathfrak{T}_\alpha^\lambda}}
\newcommand{\polytope}{I(\alpha,\beta,\nu)}
\newtheorem{Theorem}{Theorem}[section]
\newtheorem{Proposition}[Theorem]{Proposition}
\newtheorem{Corollary}[Theorem]{Corollary}
\newtheorem{Lemma}[Theorem]{Lemma}
\newtheorem{Example}[Theorem]{Example}
\theoremstyle{definition}
\newtheorem{Remark}[Theorem]{Remark}
\newtheorem{Definition}[Theorem]{Definition}
\newtheorem{Conjecture}[Theorem]{Conjecture}
\begin{document}

\title[Multiplicative structures of the immaculate basis]{Multiplicative structures of the immaculate basis of non-commutative symmetric functions}
\author[C. Berg \and N. Bergeron \and F. Saliola \and L. Serrano \and M. Zabrocki]{Chris Berg \and Nantel Bergeron \and Franco Saliola \and Luis Serrano \and Mike Zabrocki}
\date{\today}
 
\begin{abstract}
We continue our development of a new basis for the algebra of non-commutative symmetric functions.
This basis is analogous to the Schur basis for the algebra of symmetric functions, and it shares many of its wonderful properties.
For instance, in this article we describe non-commutative versions of the Littlewood-Richardson rule and the Murnaghan-Nakayama rule. 
A surprising relation develops among non-commutative Littlewood-Richardson coefficients, which has implications to the commutative case. Finally, we interpret these new coefficients geometrically as the number of integer points inside a certain polytope.
\end{abstract}

\maketitle
\setcounter{tocdepth}{3}
 
\section{Introduction} The Schur functions appear throughout mathematics:\ as the representatives for the Schubert
classes in the cohomology of the Grassmannian; as the characters for the irreducible representations of
the symmetric group and of the general linear group; and as an orthonormal basis for the algebra of symmetric functions.
The ubiquitousness of the Schur basis makes it an object of central importance in the theory of symmetric functions.
Among the characteristic properties of Schur functions 
there are combinatorial formulae using tableaux,
orthogonality relations, algebraic formulae and each of these
may be taken either as the relation defining the basis or a consequence of the definition.

If the algebra of symmetric functions, $\sym$, is viewed as an algebra with one
commutative generator at each degree, then the algebra of non-commutative symmetric
functions \cite{GKLLRT}, $\Nsym$, is an analogous algebra with one non-commutative
generator at each degree.
Until relatively recently, 
the only clear proposal for elements of $\Nsym$ that are analogous to the Schur functions
was the ribbon basis.

While this is a natural proposition, researchers
have begun to question the assumption that the ribbon basis is the best possible Schur-analogue
for $\Nsym$ and have proposed other bases (each with a different rule for
computing the commutative image) \cite{BBSSZ, HLMvW11, CFLSX}.  Each are worth exploring as
potential tools for resolving some of the positivity, combinatorial and
representation theoretical questions in symmetric functions.  For instance, Hall-Littlewood
symmetric functions seem to have interesting analogues using these bases \cite{HLMvW11,BBSSZ}
and these are a potential avenue for answering open questions about $q,t$-Kostka \cite{Mac}
and generalized Kostka coefficients \cite{SW}.

In \cite{BBSSZ}, the authors proposed a basis, called the immaculate basis, 
for the algebra of $\Nsym$.
This basis is analogous to the Schur basis of $\sym$ through a Jacobi-Trudi-like defining formula as well as
a combinatorial formula using composition tableaux, and because the immaculate functions
indexed by a partition project onto the Schur functions indexed by the same partition under the natural
map from $\Nsym$ to $\sym$.
In \cite{BBSSZ2}, the authors constructed indecomposable modules for the $0$-Hecke 
algebra whose characters correspond to the immaculate functions under the duality 
between $\Nsym$ and $\Qsym$. This provides further evidence of the virtuousness of the immaculate basis.
 
The goal of this paper is to further develop the immaculate basis of $\Nsym$. The paper is organized as follows. In Section~\ref{catfish} we review the symmetric function theory that we intend to emulate in $\Nsym$. In Section~\ref{oldcookie} we present a brief introduction to $\Nsym$ and the immaculate basis. In Section~\ref{bananapeel} we describe an extension of the Pieri rule for immaculate functions (Theorem \ref{thm:Pieri}) to ribbon shapes (Theorem \ref{slime}). In Section~\ref{sec:mnrule} we prove a version of the Murnaghan-Nakayama rule (Theorem \ref{waffle}) for the immaculate functions. In Section~\ref{sec:dualmnrule} we deduce a dual version of our Murnaghan-Nakayama rule (Corollary 6.2). In Section~\ref{sec:LRimm} we formulate and prove an analogue of the Littlewood-Richardson rule for immaculate functions (Theorem \ref{ImmLRrule}) and discover a relation amongst Littlewood-Richardson coefficients for Schur functions (Corollary \ref{cor:newsym}) that is not
easily deduced from the theory of symmetric functions. Finally, in Section~\ref{niceass} we provide a geometric interpretation of the immaculate Littlewood-Richardson coefficients as the number of integer points inside certain polytopes.

 \subsection{Acknowledgements}
This work is supported in part by  NSERC and FRQNT grants.
It is partially the result of a working session at the Algebraic
Combinatorics Seminar at the Fields Institute with the active
participation of C. Benedetti, C. Ceballos
(especially for his help on Corollary~\ref{cor:hives}),  
J. S\'anchez-Ortega, O. Yacobi, E. Ens,  H. Heglin, D. Mazur and T. MacHenry.
 
This research was facilitated by computer exploration using the open-source
mathematical software \texttt{Sage}~\cite{sage} and its algebraic
combinatorics features developed by the \texttt{Sage-Combinat}
community~\cite{sage-co}.

The authors are very grateful to Darij Grinberg for a meticulous reading of an
earlier version of this article, which led to numerous improvements in the
text.

\section{Symmetric Function Background}\label{catfish}
In this section, we  build 
notation in order to state the classical version of the results we emulate on the immaculate basis of $\Nsym$.
For more details, we refer the reader to \cite{Mac}, \cite{Sagan}, or \cite{Sta}.
\subsection{Partitions}
 
A \textit{partition} of a non-negative integer $n$ is a sequence of positive integers
$\lambda = \op \lambda_1, \lambda_2, \dots, \lambda_m \cl$ which sum to $n$ and satisfy
$\lambda_1 \geq \lambda_2 \geq \cdots \geq \lambda_m$; it is denoted
$\lambda \vdash n$. Partitions are of central importance to
algebraic combinatorics; among other things, partitions of $n$ index a basis
for the symmetric functions of degree $n$.
As such, the notation and terminology for partitions is fairly standard, thus we adhere to the notation in the aforementioned references.

\subsection{Symmetric functions}
We let $\sym$ denote the ring of symmetric functions.
As an algebra, $\sym$ is the commutative algebra over $\mathbb{Q}$ freely generated by elements
$\{h_1, h_2, \dots\}$. The algebra $\sym$ is graded, where $h_i$ has degree $i$. A natural basis for the degree $n$ component of $\sym$ is formed by the complete homogeneous symmetric functions of degree $n$, defined by
$h_\lambda := h_{\lambda_1} h_{\lambda_2} \cdots h_{\lambda_m}$.
The algebra $\sym$ is usually identified with a subalgebra of the polynomial algebra
$\mathbb{Q}\llbracket x_1, x_2, \dots\rrbracket$.
Under this identification, $h_i$ corresponds to the sum of all monomials of degree $i$.
 
\subsection{Schur functions}
\label{ssec:schurfunctions}
We define the basis of \emph{Schur functions} via the relationship to the complete homogeneous basis $\{ h_\lambda : \lambda \vdash n\}$
specified by the \emph{Jacobi-Trudi formula}:
for a sequence of integers
$\alpha = \op \alpha_1, \alpha_2, \ldots, \alpha_\ell\cl \vdash n$, we define
\begin{equation}\label{sweatpotato} s_\alpha := \det \begin{bmatrix}
h_{\alpha_1}&h_{\alpha_1+1}&\cdots&h_{\alpha_1+\ell-1}\\
h_{\alpha_2-1}&h_{\alpha_2}&\cdots&h_{\alpha_2+\ell-2}\\
\vdots&\vdots&\ddots&\vdots\\
h_{\alpha_\ell-\ell+1}& h_{\alpha_\ell-\ell+2}&\cdots&h_{\alpha_\ell}\\
\end{bmatrix} =
\det \left[h_{\alpha_i + j - i}\right]_{1 \leq i,j \leq \ell},
\end{equation}
where we use the convention that $h_0 = 1$ and $h_{-m} = 0$ for $m>0$.  In particular
the Schur functions indexed by partitions form a basis for the ring of symmetric functions.

\subsection{Power sum symmetric functions} The \emph{power sum} symmetric function $p_k$ is defined as:
\[p_k = \sum_i x_i^k.\] The collection $\{ p_\lambda:= p_{\lambda_1}p_{\lambda_2}\cdots p_{\lambda_m}\}_{\lambda \vdash n}$
forms a basis for the degree $n$ component of $\sym$.
 
\subsection{The Murnaghan-Nakayama rule}
The \emph{Murnaghan-Nakayama rule} is an explicit combinatorial expansion for the product of a Schur function with a power sum $p_k$.
For two partitions $\lambda$ and $\mu$ with $\mu \subset \lambda$, we say that $\lambda/\mu$
is a \emph{border strip} if the diagram consisting of $\lambda$ without $\mu$ is connected and contains no $2 \times 2$ square.
The \emph{height} of a border strip is defined as the number of rows it contains.
\begin{Theorem}
For $k>0$ and a partition $\mu$, \[ s_\mu p_k = \sum (-1)^{ht(\lambda/\mu)-1} s_\lambda,\]
the sum over partitions $\lambda$ for which $\lambda/\mu$ is a border strip of size $k$.
\end{Theorem}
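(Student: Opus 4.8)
The plan is to prove the identity by realizing each Schur function through the Bernstein creation operators and sweeping multiplication by $p_k$ past them. Recall that $\BB_m = \sum_{i \geq 0} (-1)^i h_{m+i}\, e_i^\perp$, where $e_i^\perp$ is the skewing operator adjoint to multiplication by $e_i$, and that $s_\lambda = \BB_{\lambda_1}\BB_{\lambda_2}\cdots\BB_{\lambda_\ell}(1)$ for any partition $\lambda$. The engine is the commutation relation
\[ p_k\, \BB_m = \BB_m\, p_k + \BB_{m+k}, \]
where on the left $p_k$ denotes the operator ``multiply by $p_k$''. Granting this, I would expand $p_k s_\mu = p_k \BB_{\mu_1}\cdots\BB_{\mu_\ell}(1)$ by moving the $p_k$ rightward through each factor, producing the swept terms $\sum_{a=1}^{\ell} \BB_{\mu_1}\cdots\BB_{\mu_a + k}\cdots\BB_{\mu_\ell}(1)$ together with a single leftover term $\BB_{\mu_1}\cdots\BB_{\mu_\ell}(p_k)$.

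First I would establish the commutation relation. Since each $h_{m+i}$ commutes with $p_k$, it suffices to compute the commutator of multiplication by $p_k$ with $e_i^\perp$. Using that skewing is compatible with the coproduct, that $p_k$ is primitive, and that $\langle e_j, p_k\rangle = (-1)^{k-1}\delta_{j,k}$, one finds $e_i^\perp(p_k f) = p_k\, e_i^\perp f + (-1)^{k-1} e_{i-k}^\perp f$, whence $[\,p_k,\, e_i^\perp\,] = (-1)^k e_{i-k}^\perp$; substituting into the definition of $\BB_m$ and reindexing collapses the sum to $\BB_{m+k}$. For the leftover term I would substitute the hook expansion $p_k = \sum_{r=0}^{k-1}(-1)^r s_{(k-r,1^r)} = \sum_{r=0}^{k-1}(-1)^r \BB_{k-r}\BB_1^{\,r}(1)$, which is itself the $\mu = \varnothing$ instance of the theorem and which I would prove directly from the generating-function identity $\sum_n n h_n t^n = \big(\sum_{j\geq 1} p_j t^j\big)\big(\sum_n h_n t^n\big)$.

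After this substitution, every summand is a Bernstein monomial $\BB_{\gamma_1}\cdots\BB_{\gamma_p}(1)$ whose index sequence need not be weakly decreasing. I would then straighten each one to a Schur function using the relation $\BB_m\BB_n = -\BB_{n-1}\BB_{m+1}$: sorting the beta-set $\beta_i = \gamma_i + (p-i)$ into strictly decreasing order either meets a repeat, in which case the term vanishes, or returns $\operatorname{sgn}(w)\, s_\lambda$ for the sorting permutation $w$.

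The main obstacle, and the combinatorial heart of the argument, is to match this algebraic output with the stated border-strip sum. Here I would pass to the abacus/beta-set picture, in which bumping one index by $k$ corresponds to moving a single bead up $k$ positions: the move is legal (no repeated beta) exactly when the added cells contain no $2\times 2$ block, i.e.\ when $\lambda/\mu$ is a genuine border strip of size $k$, while the number of beads jumped equals $\operatorname{ht}(\lambda/\mu)-1$, which is precisely the number of adjacent transpositions contributing to $\operatorname{sgn}(w)$. Verifying that these legal moves biject with the partitions $\lambda$ for which $\lambda/\mu$ is a border strip of size $k$, and that the resulting sign is $(-1)^{\operatorname{ht}(\lambda/\mu)-1}$, completes the proof; carrying the sign bookkeeping uniformly across both the swept terms and the hook-expansion leftover is the step I expect to require the most care.
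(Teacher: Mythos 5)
The paper does not actually prove this statement: it appears in Section~\ref{catfish} as classical background, with the reader sent to \cite{Sta}, \cite{Sagan}, \cite{Mac}. So there is no in-paper argument to measure yours against; the nearest relative is the proof of the non-commutative analogue, Theorem~\ref{waffle}, which works by expanding $\Psi_k=\sum_i(-1)^iR_{1^i,k-i}$, applying the ribbon product rule of Theorem~\ref{slime}, and killing the unwanted shapes with the cancellation $\sum_i(-1)^i\binom{m-1}{i}=0$. Your route is genuinely different: it is the vertex-operator proof, commuting multiplication by $p_k$ past the Bernstein operators. I checked your key identities and they are correct: primitivity of $p_k$ together with $\langle e_j,p_k\rangle=(-1)^{k-1}\delta_{j,k}$ gives $e_i^\perp(p_kf)=p_k\,e_i^\perp f+(-1)^{k-1}e_{i-k}^\perp f$, and reindexing $i\mapsto i+k$ in $\BB_m=\sum_i(-1)^ih_{m+i}e_i^\perp$ does collapse the commutator to $\BB_{m+k}$, so $p_k\BB_m=\BB_mp_k+\BB_{m+k}$ holds. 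The straightening relation $\BB_m\BB_n=-\BB_{n-1}\BB_{m+1}$, the hook expansion of $p_k$, and the abacus dictionary (a single $\beta$-value increased by $k$; a collision exactly when $\lambda/\mu$ would contain a $2\times2$ square; the number of values jumped equal to $\mathrm{ht}(\lambda/\mu)-1$) are all standard and correct. I also ran your scheme on the paper's example $s_{222}p_3$: the three swept terms give $s_{522}$, $-s_{432}$ (one transposition), $+s_{333}$ (two transpositions), and the leftover $\BB_2\BB_2\BB_2(p_3)$ gives $s_{(2,2,2,3)}-s_{(2,2,2,2,1)}+s_{(2,2,2,1,1,1)}$, whose first term dies to a repeated $\beta$-value; this reproduces the stated answer exactly. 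What each approach buys: the paper's ribbon-expansion mechanism transfers verbatim to $\Nsym$ (where there is no analogue of your commutation relation in this simple form), while your argument is shorter in the commutative setting and makes the sign $(-1)^{\mathrm{ht}-1}$ conceptually transparent as the sign of a sorting permutation. Interestingly, your creation-operator viewpoint is closer in spirit to how the immaculate basis itself is defined via the $\BB_m$ in \eqref{operdef}.

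The one place you are still waving your hands is the step you yourself flag: you must check that the swept terms and the hook-expansion leftover together hit each border strip exactly once and with the right sign. This does work out --- the swept term for index $a$ accounts precisely for the strips whose bottom row is $a\le\ell(\mu)$, and the term $(-1)^r\BB_{\mu_1}\cdots\BB_{\mu_\ell}\BB_{k-r}\BB_1^r(1)$ accounts for the strips extending $r+1$ rows below row $\ell(\mu)$, with the extra $(-1)^r$ absorbed into the length of the sorting permutation --- but this bookkeeping is the actual content of the theorem and should be written out rather than asserted. As it stands your proposal is a correct and essentially complete strategy with a standard, verifiable final step left as a sketch.
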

 
\begin{Example}
For $\lambda = \op 2,2,2 \cl$ and $k = 3$:
\[ s_{222} p_3 = s_{222111} - s_{22221} + s_{333} - s_{432} + s_{522}.\]
 \end{Example}
 
\subsection{The Littlewood-Richardson rule}
The \emph{Littlewood-Richardson rule} is an explicit combinatorial expansion for the product of two Schur functions.
 
A word $w$ over the alphabet $\{1, 2, \dots \}$
is said to be \textit{Yamanouchi} if in every prefix of $w$,
the number of occurrences of $j$ is greater than or equal to the number of occurrences of $j+1$, for all $j$.
The \textit{reading word} of a skew tableau is the word formed by reading each row from right to left, starting from the top row and moving down.

\begin{Theorem}\label{LRrule}
For partitions $\lambda$ and $\mu$,
\[ s_\lambda s_\mu = \sum_\nu c_{\lambda, \mu}^\nu s_\nu,\]
where $\nu$ is a partition of $|\lambda| + |\mu|$ and $c_{\lambda,\mu}^\nu$ is the number of skew tableaux of shape
$\nu/\lambda$ whose reading word is a Yamanouchi word of content $\mu$.
\end{Theorem}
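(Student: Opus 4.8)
The plan is to prove the equivalent statement about skew Schur functions and transfer it by adjointness. Since the Schur functions are orthonormal for the Hall inner product, $c^\nu_{\lambda\mu} = \langle s_\lambda s_\mu, s_\nu\rangle$, and because multiplication by $s_\lambda$ is adjoint to the skewing operator $s_\lambda^\perp \colon s_\nu \mapsto s_{\nu/\lambda}$, this equals $\langle s_\mu, s_{\nu/\lambda}\rangle$, the coefficient of $s_\mu$ in the Schur expansion of the skew Schur function. Hence it suffices to show
\[ s_{\nu/\lambda} = \sum_\mu c^\nu_{\lambda\mu}\, s_\mu, \]
where $c^\nu_{\lambda\mu}$ counts skew tableaux of shape $\nu/\lambda$ whose reading word is Yamanouchi of content $\mu$.

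To analyze $s_{\nu/\lambda}$ I would first pass to its tableau description $s_{\nu/\lambda} = \sum_T x^T$, the sum over semistandard skew tableaux of shape $\nu/\lambda$ weighted by their content; this follows from the (skew) Jacobi--Trudi determinant via the Lindstr\"om--Gessel--Viennot lemma. The engine of the proof is Sch\"utzenberger's jeu de taquin. The facts I would import are: (i) each skew tableau $T$ has a well-defined rectification $\mathrm{rect}(T)$ of straight shape, independent of the order of slides, and jeu de taquin preserves the multiset of entries, so $x^T = x^{\mathrm{rect}(T)}$; and (ii) for any straight-shape tableau $R$, the number of skew tableaux $T$ of shape $\nu/\lambda$ with $\mathrm{rect}(T) = R$ depends only on the shape of $R$.

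Granting these, I would group the tableaux $T$ by the shape $\kappa$ of $\mathrm{rect}(T)$. Within a fixed shape $\kappa$, partition further by the target $R = \mathrm{rect}(T)$, which ranges over all semistandard tableaux of shape $\kappa$; each such $R$ is hit the same number of times, say $c^\nu_{\lambda\kappa}$, by (ii), and every $T$ in its fiber satisfies $x^T = x^R$ by (i). Summing $x^R$ over all $R$ of shape $\kappa$ recovers $s_\kappa$, so the contribution of shape $\kappa$ is exactly $c^\nu_{\lambda\kappa}\, s_\kappa$, giving the displayed expansion. Finally I would identify the coefficient combinatorially by choosing the target $R = U_\kappa$ to be the unique tableau of shape and content $\kappa$ (row $i$ filled with $i$'s): since $\mathrm{rect}(T)$ is computed by inserting the reading word of $T$, and a word inserts to $U_\kappa$ precisely when it is Yamanouchi of content $\kappa$, the number of $T$ rectifying to $U_\kappa$ equals the number of Yamanouchi skew tableaux of content $\kappa$.

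The main obstacle is fact (ii): that the fiber size $\#\{T : \mathrm{rect}(T) = R\}$ is governed by the shape of $R$ alone. This is the substantive content of jeu de taquin theory (equivalently, of the plactic monoid); everything else is the orthogonality bookkeeping above. I would establish it by showing that each elementary slide induces a Knuth relation on reading words, so that rectification is a plactic-class invariant, and then arguing that within a plactic class the skew tableaux of fixed shape $\nu/\lambda$ are matched compatibly across the choice of $R$ by dual (Haiman) equivalence --- the step where the genuine combinatorial work lies.
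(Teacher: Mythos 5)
Your argument is correct in outline --- it is the classical jeu-de-taquin proof --- but it is an entirely different route from the paper's. The paper states Theorem~\ref{LRrule} only as background and never proves it directly; its own path to the classical rule runs through the noncommutative lift: the product $\fS_\alpha\fS_\lambda$ is expanded via the Jacobi--Trudi-type formula \eqref{skunk} and the Pieri rule into a signed sum over skew immaculate tableaux, a sign-reversing involution $\Phi$ (built from the auxiliary tableau $Y(T)$ and the operation $\Theta$ on nefarious cells) cancels everything except the Yamanouchi tableaux with $\sigma(T)=\mathrm{Id}$, giving Theorem~\ref{ImmLRrule}; projecting to $\Lambda$ and straightening composition-indexed Schur functions by $s_\zeta=-s_{\zeta\star t_r}$ yields Corollary~\ref{ImmLRgivesLR}, from which the classical rule follows after a second sign-reversing involution that the paper explicitly leaves to the reader. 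Your route instead reduces to the skew Schur expansion by Hall-inner-product adjointness and then invokes rectification. The trade-off: you import the full strength of jeu de taquin --- your ``fact (ii),'' that the fiber of rectification over $R$ depends only on the shape of $R$, is exactly the hard kernel (dual equivalence), and you only sketch it --- whereas the paper's involution is elementary and self-contained but proves strictly more, namely a positive rule in $\Nsym$ refining the classical one together with the translation-invariance of Corollary~\ref{cor:newsym}, which is invisible from the commutative jeu-de-taquin picture. One point to tighten in your write-up: ``a word inserts to $U_\kappa$ precisely when it is Yamanouchi'' should be phrased as Knuth-equivalence to the reading word of $U_\kappa$ under the right-to-left, top-to-bottom reading order used here; with a mismatched insertion convention the lattice-word characterization acquires a reversal.
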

 
\begin{Example} We give an example with $\mu = \lambda = \op 2,1 \cl$.
\[
\begin{array}{cccccccccccc}
s_{21} & \hspace{-.2in} \cdot \hspace{-.2in} &s_{21} & \hspace{-.2in}=\hspace{-.2in} & s_{2211} &\hspace{-.2in}+\hspace{-.2in}& s_{222} &\hspace{-.2in}+\hspace{-.2in}&\,s_{3111}\\[.1in]
\tikztableausmall{{\boldentry X, \boldentry X},{\boldentry X}}& &\tikztableausmall{{1,1},{2}}  &   &
\tikztableausmall{{\boldentry X,\boldentry X},{\boldentry X,1},{1},{2}} & &
\tikztableausmall{{\boldentry X, \boldentry X},{\boldentry X,1},{1,2}} & &
\tikztableausmall{{\boldentry X, \boldentry X, 1},{\boldentry X},{1}, {2}} & &
\end{array}
\]
\[
\begin{array}{ccccccccccccc}
& & &\hspace{-.2in}+\hspace{-.2in}&\,\ 2s_{321} &\hspace{-.2in}+\hspace{-.2in}&\, s_{33}&\hspace{-.2in}+\hspace{-.2in}&\, s_{411}&\hspace{-.2in}+\hspace{-.2in}&\, s_{42}\\[.1in]
& & & &
 \tikztableausmall{{\boldentry X,\boldentry X,1},{\boldentry X,1},{2}}
 \tikztableausmall{{\boldentry X,\boldentry X,1},{\boldentry X,2},{1}}  & &
 \tikztableausmall{{\boldentry X,\boldentry X, 1},{\boldentry X,1,2}} & &
 \tikztableausmall{{\boldentry X, \boldentry X, 1,1},{\boldentry X}, {2}} & &
 \tikztableausmall{{\boldentry X,\boldentry X, 1, 1},{\boldentry X,2}} & &
\end{array}
\]
\end{Example}
 
\section{The non-commutative symmetric functions}\label{oldcookie}
 
In this paper we study graded Hopf algebras whose $n$-th homogeneous component
admits a basis indexed by compositions of $n$.
In this section, we review standard definitions and notations concerning
compositions and define the Hopf algebra of non-commutative symmetric
functions and the Hopf algebra of quasi-symmetric functions.

\subsection{Compositions}
 
A \textit{composition} of a non-negative integer $n$ is a list of positive integers
$\alpha = \op \alpha_1, \alpha_2, \dots, \alpha_m \cl$ which sum to $n$. It is denoted by $\alpha \models n$.
The entries $\alpha_i$ of the composition are the \emph{parts}
of the composition.  The \emph{size} of the composition is the sum of the parts
and will be denoted $|\alpha|:=n$.  The \emph{length} of the composition is the
number of parts and will be denoted $\ell(\alpha):=m$.
(More generally, the length of any list $L$ will be denoted by $\ell(L)$.)
We let $\sort(\alpha)$
denote the partition obtained by sorting the terms in $\alpha$.

Compositions of $n$ correspond to subsets of $\{1, 2, \dots, n-1\}$.
We will follow the convention of identifying $\alpha =
\op\alpha_1, \alpha_2, \dots, \alpha_m\cl$ with the subset \[\mathcal{D}(\alpha) =
\{\alpha_1, \alpha_1+\alpha_2, \alpha_1+\alpha_2 + \alpha_3, \dots, \alpha_1+\alpha_2+\dots + \alpha_{m-1} \}.\]

If $\alpha$ and
$\beta$ are both compositions of $n$, we say that $\alpha \leq  \beta$ in \emph{refinement order} if
$\mathcal{D}(\beta) \subseteq \mathcal{D}(\alpha)$. For instance, we have that $\op1,1,2,1,3,2,1,4,2\cl \leq \op4,4,2,7\cl$, since
$\mathcal{D}(\opp 1,1,2,1,3,2,1,4,2 \clp) = \{1,2,4,5,8,10,11,15\}$ and $\mathcal{D}(\opp 4,4,2,7 \clp) = \{4,8,10\}$.

For two compositions $\alpha \models n$ and $\beta \models m$,
define $\op \alpha, \beta \cl$ to be the composition of $n + m$
obtained by concatenating $\alpha$ and $\beta$; in other words,
\begin{gather*}
    \op \alpha, \beta \cl
    = \op \alpha_1, \alpha_2, \dots, \alpha_{\ell(\alpha)},
    \beta_1, \beta_2, \dots, \beta_{\ell(\beta)} \cl.
\end{gather*}

\subsection{Non-commutative symmetric functions}
The algebra of \emph{non-commutative symmetric functions} $\Nsym$ is the free (non-commutative) algebra generated by the elements $H_1, H_2, \dots$.
It is graded, where the generator $H_i$ has degree $i$.
Therefore a basis for the degree $n$ component of $\Nsym$ is the collection of complete homogeneous non-commutative functions
$H_\alpha := H_{\alpha_1} H_{\alpha_2} \cdots H_{\alpha_m}$ for $\alpha = \op\alpha_1, \alpha_2, \dots, \alpha_m\cl$ a composition of $n$.
There exists a projection defined on the generators by
\begin{eqnarray*}
\chi: \Nsym 	& \longrightarrow 	& \sym \\
H_i			& \longmapsto		& h_i
\end{eqnarray*}
which extends multiplicatively, so that $\chi(H_\alpha) = h_{\sort(\alpha)}$.
 
The algebra $\Nsym$ is isomorphic to the Grothendieck ring of the finitely generated projective representations of the
$0$-Hecke algebra (see \cite{KrTh, DKLT}).
Under this isomorphism, the isomorphism classes of the projective indecomposable modules are identified with the \textit{ribbon functions}.
The ribbon functions $R_\alpha$ can be described in terms of the complete homogeneous non-commutative functions:
\[
R_\alpha = \sum_{\beta \geq \alpha} (-1)^{\ell(\alpha)-\ell(\beta)} \HH_\beta
\hskip .2in \textrm{or equivalently} \hskip .2in \HH_\alpha = \sum_{\beta \geq \alpha} R_\beta.
\]
Under the projection $\chi$, the ribbon functions are mapped onto the ribbon (skew) Schur functions
(see \cite{GKLLRT} for more details). They also admit a simple product formula: if $m, l \geq 1$, then
\begin{equation}\label{productribbon}
    R_{\op\alpha_1, \dots, \alpha_m\cl}
    R_{\op\beta_1, \dots, \beta_l\cl}
    =
    R_{\op\alpha_1, \dots, \alpha_m, \beta_1, \dots, \beta_l\cl}
    +
    R_{\op\alpha_1, \dots, \alpha_{m-1}, \alpha_m + \beta_1, \beta_2, \dots, \beta_l\cl}.
\end{equation}

The reference \cite{GKLLRT} introduces two bases of $\Nsym$ that are analogous
to the power sum symmetric functions.
We focus here on the $\Psi$ basis, which we define via its relationship to the ribbon basis: for a non-negative integer $k$,
\begin{equation}\label{psi}\Psi_k = \sum_{i=0}^{k-1} (-1)^i R_{\op1^i, k-i\cl} =
k H_k - \sum_{i=1}^{k-1} H_i \Psi_{k-i}\end{equation}
and $\Psi_\alpha = \Psi_{\alpha_1} \cdots \Psi_{\alpha_m}$ for compositions $\alpha = \op\alpha_1, \dots, \alpha_m\cl$. It is known that $\chi(\Psi_k) = p_k$.

There is a Hopf algebra structure on $\Nsym$ with coproduct defined on the generators as
\begin{equation}\label{eq:coprodHn}
\Delta(H_r) = \sum_{i=0}^r H_i \otimes H_{r-i},
\end{equation}
and on the basis $H_\alpha$ by $\Delta( H_\alpha ) = \Delta(H_{\alpha_1}) \Delta( H_{\alpha_2}) \cdots \Delta(H_{\alpha_m})$.
The elements $\Psi_k$ are primitive with respect to the coproduct:
that is, $\Delta(\Psi_k) = 1 \otimes \Psi_k + \Psi_k \otimes 1$.

\subsection{Quasi-symmetric functions}
The (graded) dual Hopf algebra to $\Nsym$ is the Hopf algebra of quasi-symmetric functions, $\Qsym$.
As a vector space, $\Qsym$ is the subspace of $\mathbb{Q}\llbracket x_1, x_2, \dots \rrbracket$
spanned by the \emph{monomial quasi-symmetric functions} $M_\alpha$ defined by:
\begin{equation}
  \label{monomial-qsym}
  M_\alpha = \sum_{i_1 < i_2 < \cdots < i_m} x_{i_1}^{\alpha_1} x_{i_2}^{\alpha_2} \cdots x_{i_m}^{\alpha_m}.
\end{equation}
The duality between $\Nsym$ and $\Qsym$ is established by the pairing
\[
\begin{array}{cccccccccc}
\langle \cdot, \cdot \rangle & : & \Nsym \times \Qsym 	& \longrightarrow	& \mathbb{Q} \\
& & \langle H_\alpha, M_\beta \rangle 				& = 				& \delta_{\alpha,\beta}.
\end{array}
\]
As a consequence, the products and coproducts of $\Nsym$ and $\Qsym$ are related by the following identities. For $F, G \in \Nsym$ and $K, L \in \Qsym$,
\begin{equation}\label{eq:coprod}
\langle F \otimes G, \Delta_{\Qsym}(K) \rangle = \langle  F \cdot G, K \rangle
\end{equation}
and
\begin{equation}\label{eq:prod}
\langle F, K \cdot L \rangle = \langle \Delta_{\Nsym}(F), K \otimes L \rangle.
\end{equation}
When it is clear from the context, we use $\Delta$ to denote the coproduct without specifying whether the algebra is $\Nsym$, $\Qsym$ or $\sym$.

\subsection{Immaculate functions}
In \cite{BBSSZ}, we defined a new basis of $\Nsym$, called the \emph{immaculate basis}.
It is readily defined in terms of the complete basis via an analogue of the
classical Jacobi-Trudi formula (see Section \ref{ssec:schurfunctions}).
For any sequence $\alpha = \op\alpha_1, \alpha_2, \dots, \alpha_m\cl \in \ZZ^m$
with $m \geq 0$, define
\begin{equation}\label{skunk}
\fS_\alpha  = \sum_{\sigma \in S_m} (-1)^\sigma \HH_{\op\alpha_1+\sigma_1 -1, \alpha_2 + \sigma_2 -2, \dots, \alpha_m + \sigma_m - m\cl},
\end{equation}
where $\HH_0 = 1$ and $\HH_i = 0$ if $i < 0$.
Thus, $\fS_\alpha$ is defined for any sequence of integers;
but only those with $\alpha$ a composition are part of the immaculate basis.
To see that this is indeed a basis, one proves that the change of basis matrix
relating these elements with the complete basis is unitriangular.
It follows from this definition that
$\chi (\fS_\lambda) = s_\lambda$
for every partition $\lambda$.

Equivalently, the immaculate functions can be constructed using certain
\emph{creation operators} (this is the definition of the immaculate
functions given in \cite{BBSSZ}).
In analogy with the Bernstein operators \cite{Mac,Zel}, which are creation
operators for the Schur functions, we define for $m \in \ZZ$,
\begin{equation}\label{operdef}
\BB_{m} = \sum_{i \geq 0} (-1)^i H_{m+i} M_{\op1^i\cl}^\perp~,
\end{equation}
 where $M_{\op1^i\cl}^\perp$ denotes the adjoint operator to multiplication by $M_{\op1^i\cl} = F_{\op1^i\cl}$ in $\Qsym$.
 
\begin{Proposition}[{\cite[Definition 3.2 and Theorem 3.27]{BBSSZ}}]
For $\alpha \in \ZZ^m$,
\begin{equation}
    \left(\BB_{\alpha_1} \circ \BB_{\alpha_2} \circ \cdots \circ \BB_{\alpha_m}\right)
    \left( 1 \right) = \fS_\alpha.
\end{equation}
\end{Proposition}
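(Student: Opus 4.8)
The plan is to induct on the length $m$, peeling off the leftmost operator. Since $\BB_{\alpha_1}\BB_{\alpha_2}\cdots\BB_{\alpha_m}1 = \BB_{\alpha_1}\bigl(\BB_{\alpha_2}\cdots\BB_{\alpha_m}1\bigr)$, the statement reduces by the inductive hypothesis to the single-operator identity
\[
\BB_a\,\fS_\beta = \fS_{(a,\beta)}\qquad (a\in\ZZ,\ \beta\in\ZZ^{m-1}),
\]
together with the base case $\BB_a 1 = H_a = \fS_{(a)}$, which is immediate from \eqref{operdef} since $M_{1^i}^\perp(1)=\delta_{i,0}$. Conceptually, this identity asserts that $\BB_a$ implements the operation of bordering the Jacobi--Trudi matrix of \eqref{skunk} with a new top row $(H_a,H_{a+1},\dots)$, so the whole proof is a non-commutative cofactor expansion.

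First I would record the action of the skewing operators. Directly from the pairing $\langle H_\alpha,M_\beta\rangle=\delta_{\alpha,\beta}$ one gets $M_{1^i}^\perp(H_n)=H_n,\ H_{n-1},\ 0$ according as $i=0,\ 1,\ \ge 2$, because the coefficient of $M_{[n]}$ in $M_{1^i}M_\delta$ can be nonzero only when the underlying quasi-shuffle collapses to a single part, which forces $i\le 1$. The coproduct dual to the product via \eqref{eq:prod} gives $\Delta(M_{1^i})=\sum_{a+b=i}M_{1^a}\otimes M_{1^b}$, so the skewing operators satisfy the Leibniz rule $M_{1^i}^\perp(XY)=\sum_{a+b=i}M_{1^a}^\perp(X)\,M_{1^b}^\perp(Y)$ (the skewing lemma of \cite{BBSSZ}). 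Applying this across a monomial $H_\gamma=H_{\gamma_1}\cdots H_{\gamma_{m-1}}$ and inserting the single-letter values shows $M_{1^i}^\perp(H_\gamma)=\sum_{|S|=i}H_{\gamma-\epsilon_S}$, where $S$ ranges over the $i$-subsets of positions and $\gamma-\epsilon_S$ denotes the sequence obtained from $\gamma$ by decrementing the parts indexed by $S$ by one. Expanding $\fS_\beta$ through \eqref{skunk} and applying this termwise yields the clean formula $M_{1^i}^\perp(\fS_\beta)=\sum_{|S|=i}\fS_{\beta-\epsilon_S}$.

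Substituting into \eqref{operdef} gives $\BB_a\fS_\beta=\sum_{i\ge 0}(-1)^i H_{a+i}\sum_{|S|=i}\fS_{\beta-\epsilon_S}$; expanding each $\fS_{\beta-\epsilon_S}$ by \eqref{skunk} exhibits the left-hand side as a signed sum of $H$-monomials indexed by triples $(i,S,\sigma)$ with $\sigma\in S_{m-1}$ and $|S|=i$, while \eqref{skunk} presents $\fS_{(a,\beta)}$ as a signed sum of $H$-monomials indexed by $\tau\in S_m$. Matching the leftmost factor forces $\tau(1)=i+1$, and I would define the correspondence $\tau\mapsto(i,S,\sigma)$ by $i=\tau(1)-1$, declaring $r\in S$ exactly when $\tau(r+1)<\tau(1)$, and setting $\sigma(r)=\tau(r+1)-1+[\,r\in S\,]$. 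One checks that this is a bijection onto all admissible triples and that $(-1)^\tau=(-1)^i(-1)^\sigma$, after which the two $H$-expansions agree term by term; this is exactly the Laplace expansion of the non-commutative determinant \eqref{skunk} along its first row, the entries of that row emerging on the left because the defining product is ordered by increasing row index. The hard part will be this last step: confirming that the sign $(-1)^i(-1)^\sigma$ produced by the decrement-and-permute bookkeeping matches the cofactor sign $(-1)^\tau$, and that every admissible triple comes from a unique $\tau$, so that the straightening leaves no spurious uncancelled terms.
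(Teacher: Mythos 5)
The paper itself offers no proof of this Proposition --- it is imported verbatim from \cite{BBSSZ} (Theorem 3.23) --- so your attempt can only be judged on its own terms. Your reduction to the single identity $\BB_a\fS_\beta=\fS_{(a,\beta)}$ is the right move, and every intermediate formula you state checks out: the values $M_{1^i}^\perp(H_n)=H_n,\,H_{n-1},\,0$ for $i=0,1,\ge 2$, the Leibniz rule coming from $\Delta(M_{1^i})=\sum_{a+b=i}M_{1^a}\otimes M_{1^b}$, and the dual Pieri formula $M_{1^i}^\perp(\fS_\beta)=\sum_{|S|=i}\fS_{\beta-\epsilon_S}$. Your bijection $\tau\mapsto(i,S,\sigma)$ and the sign identity $(-1)^\tau=(-1)^i(-1)^\sigma$ (via $\mathrm{inv}(\tau)=i+\mathrm{inv}(\sigma)$) are also correct.

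The genuine gap is the step you yourself flag as ``the hard part,'' and it is not a routine check. Your correspondence hits exactly the triples with $\sigma(S)=\{1,\dots,i\}$ (equivalently $S=\sigma^{-1}\{1,\dots,i\}$), of which there are $m!$; but the double expansion of $\sum_i(-1)^iH_{a+i}\sum_{|S|=i}\fS_{\beta-\epsilon_S}$ runs over all $\sum_i\binom{m-1}{i}(m-1)!=2^{m-1}(m-1)!$ triples, so for $m\ge 3$ there are genuinely extra terms that must cancel in pairs. For instance, with $m=3$ and $\beta=(b,c)$, the triples $(i,S,\sigma)=(1,\{1\},(2\,1))$ and $(1,\{2\},\id)$ both produce the monomial $H_{a+1}H_{b}H_{c-1}$, with signs $+1$ and $-1$ respectively. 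Exhibiting the sign-reversing involution on the non-admissible triples (two triples give the same monomial iff $\sigma(r)-[r\in S]=\sigma'(r)-[r\in S']$ for all $r$, and one must pair them off coherently) is the actual content of the Laplace-expansion argument, and until it is written down the proof is incomplete. An alternative that sidesteps this bookkeeping is to first establish the commutation relation $[M_{1^i}^\perp,\,{}^L H_n]={}^L H_{n-1}M_{1^{i-1}}^\perp$ (the route taken in \cite{BBSSZ}) and deduce the identity by induction, rather than expanding both sides fully into $H$-monomials.
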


\section{Product of an immaculate function with a ribbon function}\label{bananapeel}
 
In this section, we prove that the product of an immaculate function with a ribbon function has a positive expansion in the immaculate basis.
 
\subsection{Skew immaculate tableaux}

For two compositions $\alpha$ and $\beta$ with $\alpha_i \geq \beta_i$ for all $i$, we construct the skew diagram $\alpha / \beta$ as follows. First we superimpose the upper left corners of $\alpha$ and $\beta$, and then we remove the cells of $\beta$ from the resulting diagram.

A \emph{composition tableau} is a map from the cells of $\alpha/\beta$ to ${\mathbb N}$. (Note that there is more than one definitions of ``composition tableau'' in the literature.)
The \emph{shape} $\alpha/\beta$ of the tableau is denoted $sh(T)$.  The \emph{content} of the tableau is the integer vector whose $i^{th}$ entry is
equal to the number of $i$'s appearing in the tableau, and it is denoted $c(T)$. The \emph{reading word} of the tableau, denoted $\read(T)$,
is the word formed by reading the entries in each row from right to left, starting from the top row and moving down.

A composition tableau is called \emph{immaculate} if the entries in
each row are weakly increasing from left to right and the entries in the first column are strictly increasing from top to bottom.

An immaculate tableau of shape $\alpha/\beta$ is \emph{standard} if each of the numbers 1 through $|\alpha|-|\beta|$ appears exactly once.

The \emph{descent set} of a standard immaculate tableau is the set of all
positive integers $j$ that appear in a row strictly above the row containing
$j+1$.
Its \emph{descent composition} is the composition associated with the descent
set under the map $\mathcal{D}$; it is denoted $D(T)$.
 
\begin{Example}\label{duck} A standard skew immaculate tableau of shape $\op2,3,2,1\cl/\op 1,2\cl$ is
$$T = \tikztableausmall{{\boldentry X,2},{\boldentry X,\boldentry X,4}, {1,3},{5}}~.$$
Its reading word is $\read(T) = 24315$.
Its descent set is $\{ 2, 4 \}$
since $2$ appears above the row containing $3$
and $4$ appears above the row containing $5$;
its descent composition  is $D(T) = \op 2,2,1\cl$.
\end{Example}
 
\subsection{The product rule}
 
In \cite[Theorem 3.5]{BBSSZ}, we established an analogue of the classical Pieri rule for expanding the product $ \fS_\alpha H_i$ in the immaculate basis.

\begin{Theorem}[Right-Pieri rule]\label{thm:Pieri}
For a composition $\alpha$ and an integer $i$,
\[\fS_\alpha  \HH_i  =  \sum_{  \alpha  \subset_{i}  \gamma}  \fS_\gamma,\]
where $\alpha  \subset_{i}  \gamma$  if:
\begin{enumerate}
\item $|\gamma| = |\alpha| + i$,
\item $\alpha_j \leq \gamma$ for all $1 \leq j \leq \ell(\alpha)$,
\item $\ell(\gamma) \leq \ell(\alpha) + 1.$
\end{enumerate} 
\end{Theorem}
 
The Pieri rule is an instance of the following more general result (since $R_i = H_i$).
 
\begin{Theorem}\label{slime}
For compositions $\alpha$ and $\beta$,
\[ \fS_\alpha R_\beta = \sum_{\substack{sh(T)= \gamma / \alpha \\ D(T) = \beta}} \fS_\gamma,\]
namely, the sum runs over all standard immaculate tableaux of shape $\gamma/\alpha$ and descent composition~$\beta$.
\end{Theorem}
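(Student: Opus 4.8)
The plan is to reduce everything to the already-established Pieri rule (Theorem \ref{thm:Pieri}) by expanding the ribbon in the complete basis via $R_\beta = \sum_{\delta \geq \beta}(-1)^{\ell(\beta)-\ell(\delta)}\HH_\delta$, computing each $\fS_\alpha \HH_\delta$ by iterating Pieri, and then recognizing the resulting signed sum as a M\"obius inversion over the refinement lattice. Concretely, I would introduce for each skew shape $\gamma/\alpha$ and composition $\delta$ the integer $K_{\gamma/\alpha,\delta}$ counting semistandard immaculate tableaux (weakly increasing rows, strictly increasing first column) of shape $\gamma/\alpha$ and content $\delta$, and the integer $d_{\gamma/\alpha,\beta}$ counting standard immaculate tableaux $T$ of shape $\gamma/\alpha$ with $D(T)=\beta$. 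The theorem would then follow from three facts: (i) $\fS_\alpha\HH_\delta = \sum_\gamma K_{\gamma/\alpha,\delta}\fS_\gamma$; (ii) $K_{\gamma/\alpha,\delta} = \sum_{\beta\geq\delta} d_{\gamma/\alpha,\beta}$; and (iii) M\"obius inversion on the composition lattice, whose signs are exactly those in the ribbon expansion.

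For (i), iterating Theorem \ref{thm:Pieri} gives $\fS_\alpha \HH_{\delta_1}\cdots\HH_{\delta_k} = \sum \fS_\gamma$, summed over saturated chains $\alpha = \gamma^{(0)} \subset_{\delta_1}\gamma^{(1)}\subset_{\delta_2}\cdots\subset_{\delta_k}\gamma^{(k)}=\gamma$. I would set up the bijection between such chains and semistandard immaculate fillings, labelling by $i$ the cells of $\gamma^{(i)}/\gamma^{(i-1)}$. The point needing care is that the defining conditions of $\subset_{\delta_i}$ translate exactly into the immaculate conditions: "each row only grows to the right" yields weakly increasing rows, while the constraint $\ell(\gamma^{(i)})\leq \ell(\gamma^{(i-1)})+1$ (at most one new row per step) corresponds precisely to the first column of $\gamma/\alpha$ being strictly increasing, since the first-column cells of a skew shape are exactly the initial cells of the rows below $\alpha$, each created at a distinct step.

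The substance lies in (ii), which I expect to be the main obstacle. Here I would use standardization: each semistandard immaculate $S$ of content $\delta$ standardizes to a unique standard $T$, and conversely destandardizing $T$ by merging the values in each block of $\delta$ yields a content-$\delta$ filling $S$ with $\operatorname{std}(S)=T$ precisely when no block of $\delta$ contains a descent of $T$, i.e. $\mathcal{D}(\beta)\subseteq\mathcal{D}(\delta)$ with $\beta = D(T)$, which is the refinement relation $\beta \geq \delta$. The delicate issue is that $S$ must remain immaculate after merging, and the only condition at risk is strict increase of the first column. I would isolate the key lemma that between two first-column cells of $T$ in consecutive rows, with values $u<w$, the value $w-1$ necessarily lies in a strictly higher row (it cannot sit in the lower row, whose leftmost entry is $w$, nor below it, where every entry exceeds $w$), so $w-1$ is always a descent; hence the no-descent-in-block condition already forces $u$ and $w$ into different blocks and the first-column condition becomes automatic. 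This gives $K_{\gamma/\alpha,\delta}=\sum_{\beta\geq\delta}d_{\gamma/\alpha,\beta}$.

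Finally, the relation in (ii) sums $d$ over coarsenings of $\delta$, so M\"obius inversion on the Boolean lattice of subsets of $\{1,\dots,n-1\}$ gives $d_{\gamma/\alpha,\beta}=\sum_{\delta\geq\beta}(-1)^{\ell(\beta)-\ell(\delta)}K_{\gamma/\alpha,\delta}$, whose signs coincide with those in the ribbon expansion. Combining with (i),
\[
\fS_\alpha R_\beta = \sum_{\delta\geq\beta}(-1)^{\ell(\beta)-\ell(\delta)}\fS_\alpha\HH_\delta = \sum_\gamma\Big(\sum_{\delta\geq\beta}(-1)^{\ell(\beta)-\ell(\delta)}K_{\gamma/\alpha,\delta}\Big)\fS_\gamma = \sum_\gamma d_{\gamma/\alpha,\beta}\,\fS_\gamma,
\]
which is exactly the claimed expansion over standard immaculate tableaux of shape $\gamma/\alpha$ and descent composition $\beta$.
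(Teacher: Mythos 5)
Your proof is correct, but it takes a genuinely different route from the paper. The paper proves Theorem \ref{slime} by induction on $\ell(\beta)$, using the two-term ribbon recursion $R_{\beta_1}R_{[\beta_2,\dots,\beta_m]} = R_\beta + R_{[\beta_1+\beta_2,\beta_3,\dots,\beta_m]}$ together with the Pieri rule; the only combinatorial input is the observation that a $[\beta_1]$-strip followed by a tableau of descent composition $[\beta_2,\dots,\beta_m]$ assembles into a tableau whose descent composition is either $\beta$ or $[\beta_1+\beta_2,\beta_3,\dots,\beta_m]$, so a single subtraction isolates the desired terms. You instead expand $R_\beta$ fully in the $\HH$-basis and run a global M\"obius inversion over the Boolean lattice of descent sets, with the standardization bijection $K_{\gamma/\alpha,\delta}=\sum_{\mathcal{D}(\beta)\subseteq\mathcal{D}(\delta)}d_{\gamma/\alpha,\beta}$ as the key lemma. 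Your identification of the delicate point is exactly right: the first-column condition could fail under destandardization, and your lemma that the predecessor $w-1$ of a first-column value $w$ always sits strictly higher (hence is automatically a descent, forcing a block boundary) correctly disposes of it. What your approach buys is the intermediate identity $\fS_\alpha\HH_\delta=\sum_\gamma K_{\gamma/\alpha,\delta}\fS_\gamma$ in semistandard form, which the paper in fact invokes separately in the proof of Theorem \ref{ImmLRrule}, together with the finer statement that semistandard immaculate tableaux of content $\delta$ biject with standard ones whose descent composition coarsens to $\delta$; what the paper's induction buys is brevity, since it never needs to discuss standardization at all. Both arguments rest ultimately on Theorem \ref{thm:Pieri}, so neither is circular.
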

\begin{proof}
We prove this by induction on the length of $\beta$. If $\ell(\beta) = 1$ then this is just the Pieri rule.
Suppose $\beta$ is of length $m$. The product rule for ribbons, Equation \eqref{productribbon},
implies that
\[
H_{\beta_1} R_{\op\beta_2, \beta_3, \dots, \beta_m\cl} = R_{\beta_1} R_{\op\beta_2, \beta_3, \dots, \beta_m\cl }
= R_{\op\beta_1+\beta_2, \beta_3, \beta_4, \dots, \beta_m\cl} + R_\beta.
\]
 
By the Pieri rule (Theorem \ref{thm:Pieri}) and induction, we have
\begin{equation}\label{snickers}
\fS_\alpha H_{\beta_1}R_{\op\beta_2, \beta_3, \dots, \beta_m\cl} =
\sum_{\substack{   sh(S) = \eta /  \alpha,  sh(P) = \gamma / \eta\\ D(S) = \op\beta_1 \cl, D(P) = \op\beta_2, \dots, \beta_m\cl}}
\fS_\gamma. 
\end{equation}
Again by induction,
\[
\fS_\alpha R_{\op\beta_1+\beta_2, \beta_3, \beta_4, \dots, \beta_m\cl}
= \sum_{\substack{sh(Q) =\delta / \alpha \\ D(Q) = \op\beta_1+\beta_2, \beta_3, \dots, \beta_m\cl}} \fS_\delta.
\]
Consider the summation in Equation \eqref{snickers}:
the tableaux obtained by combining $S$ and the tableau obtained by adding
$\beta_1$ to the entries of $P$ either have descent set $\beta$ or
$\op\beta_1+\beta_2, \beta_3, \dots, \beta_m\cl$.
Thus,
\[
\fS_\alpha R_\beta = \fS_\alpha(H_{\beta_1}R_{\op\beta_2, \beta_3, \dots, \beta_m\cl} - 
R_{\op\beta_1+\beta_2, \beta_3, \beta_4, \dots, \beta_m\cl} )
= \sum_{\substack{sh(T) =  \gamma / \alpha \\ D(T) = \beta}} \fS_\gamma. \qedhere
\]
\end{proof}
 
\begin{Example} For $\alpha = \op2,1\cl$ and $\beta = \op1,2\cl$.
\[
\begin{array}{cccccccccccccccc}
\fS_{21} R_{12} & \hspace{-.2in}=\hspace{-.2in} & \fS_{2112} &\hspace{-.2in}+\hspace{-.2in}& \fS_{2121} &\hspace{-.2in}+\hspace{-.2in}&\,\fS_{2211}&\hspace{-.2in}+\hspace{-.2in}&\,\fS_{222}&\hspace{-.2in}+\hspace{-.2in}&\, \fS_{231}&\hspace{-.2in}+\hspace{-.2in}&\, \fS_{3111}\\[0.1in]
&&
\tikztableausmall{{\boldentry X, \boldentry X},{\boldentry X},{1}, {2,3}} & &
\tikztableausmall{{\boldentry X, \boldentry X},{\boldentry X}, {1,3}, {2}} & &
\tikztableausmall{{\boldentry X, \boldentry X},{\boldentry X,3}, {1}, {2}} & &
\tikztableausmall{{\boldentry X, \boldentry X},{\boldentry X,1}, {2,3}} & &
\tikztableausmall{{\boldentry X, \boldentry X},{\boldentry X,1,3}, {2}} & &
\tikztableausmall{{\boldentry X, \boldentry X,3},{\boldentry X},{1}, {2}} & &
\end{array}
\]
\[
\begin{array}{ccccccccccccc}
\hspace{-.2in}+\hspace{-.2in}&\,\ 2\fS_{321} &\hspace{-.2in}+\hspace{-.2in}&\, \fS_{312}&\hspace{-.2in}+\hspace{-.2in}&\, \fS_{33}&\hspace{-.2in}+\hspace{-.2in}&\, \fS_{411}&\hspace{-.2in}+\hspace{-.2in}&\, \fS_{42}\\[0.1in]
&
 \tikztableausmall{{\boldentry X, \boldentry X,1},{\boldentry X,3}, {2}}
 \tikztableausmall{{\boldentry X, \boldentry X,3},{\boldentry X,1}, {2}} & &
 \tikztableausmall{{\boldentry X, \boldentry X,1},{\boldentry X}, {2,3}} & &
 \tikztableausmall{{\boldentry X, \boldentry X,1},{\boldentry X,2,3}} & &
 \tikztableausmall{{\boldentry X, \boldentry X,1,3},{\boldentry X}, {2}} & &
   \tikztableausmall{{\boldentry X, \boldentry X,1,3},{\boldentry X,2}}
\end{array}
\]
\end{Example}
 
\section{Murnaghan-Nakayama rule for immaculate functions}\label{sec:mnrule}

The classical Murnaghan-Nakayama rule provides the Schur expansion of the
product of a Schur function and a power sum.
We formulate and prove a rule for the immaculate expansion of the product of an
immaculate function $\fS_\alpha$ and a noncommutative power sum $\Psi_k$.
We call this rule a Murnaghan-Nakayama rule for immaculate functions.  The
commutative image of this rule can be used to derive the rule for the
expansion of $s_\lambda p_k$ using the usual Murnaghan-Nakayama rule
by `straightening' terms which are not indexed
by partitions.

\begin{Theorem}\label{waffle}
For a composition $\alpha$ and a positive integer $k$,
\[
\fS_\alpha \Psi_k = - \sum_{\beta \vDash k} (-1)^{\ell(\beta)} \fS_{\op\alpha_1 \ldots, \alpha_{\ell(\alpha)},\beta_1, \ldots, \beta_{\ell(\beta)}\cl} + \sum_{j=1}^{\ell(\alpha)} \fS_{\op\alpha_1, \dots, \alpha_j+k,\dots, \alpha_{\ell(\alpha)}\cl}.
\]
\end{Theorem}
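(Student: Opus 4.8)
The plan is to expand $\Psi_k$ through its defining formula \eqref{psi} and then apply the ribbon product rule of Theorem \ref{slime} to each summand, finally collecting the coefficient of each $\fS_\gamma$. Concretely, I would write
\[
\fS_\alpha \Psi_k
= \sum_{i=0}^{k-1} (-1)^i\, \fS_\alpha R_{1^i,\,k-i}
= \sum_{i=0}^{k-1} (-1)^i \sum_{\substack{sh(T)=\gamma/\alpha\\ D(T)=[1^i,k-i]}} \fS_\gamma .
\]
The key observation is that the descent composition $[1^i,k-i]$ corresponds under $\mathcal{D}$ to the initial segment $\{1,2,\dots,i\}$ of $\{1,\dots,k-1\}$. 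Hence the coefficient of a fixed $\fS_\gamma$ is $\sum_T (-1)^{|D(T)|}$, the signed count of standard immaculate tableaux of shape $\gamma/\alpha$ whose descent set is an initial segment, each weighted by $(-1)$ raised to its number of descents.

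Next I would extract the structural meaning of ``descent set equal to an initial segment.'' Writing $r_t$ for the row of the entry $t$, the condition $D(T)=\{1,\dots,i\}$ says exactly that $r_1<r_2<\cdots<r_{i+1}\ge r_{i+2}\ge\cdots\ge r_k$; in particular the entry $i+1$ occupies the bottom-most occupied row, and the minimum entry of that row is precisely $i+1$. This already evaluates the two families on the right-hand side. If $\gamma=[\alpha_1,\dots,\alpha_j+k,\dots,\alpha_{\ell(\alpha)}]$, so that all $k$ cells are appended to a single existing row $j$, the shape forces the unique filling $1,2,\dots,k$ along that row, which has no descent and contributes $+\fS_\gamma$. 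If instead $\gamma=[\alpha,\beta]$, so that all $k$ cells lie in new rows forming $\beta$, then the ``first column strictly increasing'' axiom applied to the $\ell(\beta)$ new rows, together with the unimodality of $(r_t)$, forces the first column to read $1,2,\dots,\ell(\beta)$ from top to bottom and pins down the remaining entries uniquely; this tableau has exactly $\ell(\beta)-1$ descents, so it contributes $(-1)^{\ell(\beta)-1}\fS_{[\alpha,\beta]}$.

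It remains to show that every other shape $\gamma/\alpha$ contributes $0$, and here I would construct a sign-reversing involution on the standard immaculate tableaux of such a shape whose descent set is an initial segment. The model case is the shape with one cell added to an existing row and one cell in a new row: exchanging the two smallest entries between those two incomparable cells toggles the single descent and flips the sign. In general the involution should toggle one descent by swapping the entry governing the bottom-most occupied row with an entry in an incomparable cell, changing $i$ by exactly one while preserving the shape and the initial-segment property.

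I expect the construction and verification of this involution to be the main obstacle. One must check that whenever $\gamma/\alpha$ is neither of the two rigid types above there is a canonically chosen pair of cells to exchange, that the exchange is well defined (keeps every row increasing and the first column strictly increasing), that it is a genuine involution with no fixed points on these ``mixed'' shapes, and that it indeed alters the descent count by one. Once this is established, the signed counts collapse to exactly the two contributions computed above, yielding the stated expansion of $\fS_\alpha\Psi_k$.
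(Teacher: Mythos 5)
Your overall strategy coincides with the paper's: expand $\Psi_k$ via \eqref{psi}, apply Theorem \ref{slime}, read off the coefficient of $\fS_\gamma$ as a signed count of standard immaculate tableaux whose descent set is an initial segment $\{1,\dots,i\}$, and treat separately the two rigid families $[\alpha,\beta]$ and $[\alpha_1,\dots,\alpha_j+k,\dots]$, for each of which the filling is forced. Your analysis of those two families is sound. The genuine gap is the one you flag yourself: for all remaining shapes you need the signed count to vanish, and the sign-reversing involution you invoke is only described in a two-cell model case. As sketched (``swap the entry governing the bottom-most row with an entry in an incomparable cell'') it is not yet a definition for general $k$: you must specify the pair of cells canonically, and verify that the swap preserves weakly increasing rows, the strictly increasing first column, and the initial-segment property, while changing $i$ by exactly one and having no fixed points on mixed shapes. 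The paper avoids this entirely by a direct enumeration: if the shape occupies $m\geq 2$ distinct rows and is not of a rigid type, then a tableau with descent set $\{1,\dots,i\}$ is determined by choosing which $i$ of the $m-1$ non-bottom rows receive the entries $1,\dots,i$ (one per row, necessarily in the leftmost new cell, in increasing order down the page), since $i+1$ is forced into the bottom-most, leftmost cell and the remaining entries are placed in the unique way creating no further descent; the coefficient is then $\sum_{i}(-1)^i\binom{m-1}{i}=0$. I recommend replacing your involution with this count, but note one point the paper itself glosses over: when the shape has $q\geq 1$ rows lying entirely below $\alpha$, the strictly-increasing-first-column condition forces all $q-1$ non-bottom such rows to be among the chosen ones, so the true count is $\binom{m-q}{i-q+1}$ rather than $\binom{m-1}{i}$; the alternating sum is still $(-1)^{q-1}(1-1)^{m-q}=0$ precisely because a mixed shape has $m>q$.

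A separate remark on signs: your computation gives $(-1)^{\ell(\beta)-1}$ for the coefficient of $\fS_{[\alpha,\beta]}$, since the unique tableau of that shape has descent set $\{1,\dots,\ell(\beta)-1\}$, i.e.\ descent composition $[1^{\ell(\beta)-1},k-\ell(\beta)+1]$. This disagrees with the exponent $(-1)^{\ell(\beta)}$ in the statement you were asked to prove, but it is the sign consistent with the paper's worked example $\fS_{132}\Psi_3$ and with Lemma \ref{doggie}; the exponent in the printed statement (and the descent composition $[1^{\ell(\beta)},k-\ell(\beta)]$ asserted in the paper's own proof) appears to be off by one, so your version is the correct one.
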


This rule follows from Lemmas \ref{fishtaco} and \ref{puppy} below.  We first provide
an example that might help
to visualize the diagrams that arise in the expression since the coefficient of
$\fS_\alpha$ in a product of $\Psi_\beta$ could be interpreted as a set of composition
tableaux which record the successive shapes that occur in the expressions for $\Psi_{\beta_1},
(\Psi_{\beta_1})\Psi_{\beta_2},((\Psi_{\beta_1})\Psi_{\beta_2})\Psi_{\beta_3}, \ldots, \Psi_{\beta}$.

\begin{Example} We illustrate Theorem \ref{waffle} with $\alpha = \op1,3,2\cl$ and $k=3$.
\[
\begin{array}{cccccccccccccccccccccc}
\fS_{132}\Psi_3 & = & \fS_{132111} & - & \fS_{13212} & - & \fS_{13221} & + & \fS_{1323}\\[0.05in]
& &
\tikztableautiny{{\boldentry X},{\boldentry X, \boldentry X, \boldentry X},{\boldentry X, \boldentry X},{\null}, {\null},{\null}} & &
\tikztableautiny{{\boldentry X},{\boldentry X, \boldentry X, \boldentry X},{\boldentry X, \boldentry X},{\null}, {\null,\null}} & &
\tikztableautiny{{\boldentry X},{\boldentry X, \boldentry X, \boldentry X},{\boldentry X, \boldentry X},{\null,\null}, {\null}} & &
\tikztableautiny{{\boldentry X},{\boldentry X, \boldentry X, \boldentry X},{\boldentry X, \boldentry X},{\null,\null,\null}}\\[0.45in]
& + & \fS_{135} & + & \fS_{162} & + & \fS_{432}. \\[0.05in]
&&\tikztableautiny{{\boldentry X},{\boldentry X, \boldentry X, \boldentry X},{\boldentry X, \boldentry X,,,}} & &
\tikztableautiny{{\boldentry X},{\boldentry X, \boldentry X, \boldentry X,,,},{\boldentry X, \boldentry X}} & &
\tikztableautiny{{\boldentry X,,,},{\boldentry X, \boldentry X, \boldentry X},{\boldentry X, \boldentry X}} & & \\ 
\end{array}
\]
\end{Example}

There is a non-commutative bias that is built into this rule since
in general there no bijection between the terms in the immaculate-expansion of
$\Psi_{\alpha}$ and $\Psi_{[\alpha_{\sigma(1)}, \alpha_{\sigma(2)}, \ldots, \alpha_{\sigma(\ell(\alpha))}]}$
for $\sigma \in S_{\ell(\alpha)}$.  A small example for comparison shows that there are 12 terms
appearing in the expansion of $\Psi_{[3,2]}$ and 11 terms appearing in
the expansion of $\Psi_{[2,3]}$.  The rule for the product $\Psi_k \fS_\alpha$
in general has more terms than $\fS_\alpha \Psi_k$ 
and is not modeled as neatly by composition tableaux (consider for instance the special case
implied by Theorem \ref{ImmLRrule}).

\begin{Lemma}\label{fishtaco}
For a non-commutative symmetric function $f$, a positive integer $k$,
and a non-negative integer $m$,
\[
\BB_m( f )\Psi_k = \BB_m(f \Psi_k) + \BB_{m+k}(f)~.
\]
\end{Lemma}

\begin{proof}
We use well known Hopf algebra relations to develop the actions of the operators used
in these formulas.  From \cite[Lemma 2.5]{BBSSZ} we know that
\begin{equation}
M_{\op1^r\cl}^\perp(H_k) = \begin{cases}
H_k&\text{if }r=0,\\
H_{k-1}&\text{if }r=1,\\
0&\text{otherwise.}
\end{cases}
\end{equation}
Using \cite[Lemma 2.4]{BBSSZ} and the coproduct formula $\Delta(M_{\op1^r\cl}) = \sum_{i=0}^r
M_{\op1^i\cl} \otimes M_{\op1^{r-i}\cl}$ we can compute by induction on Equation \eqref{psi}
to conclude
\begin{equation}
M_{\op1^r\cl}^\perp(\Psi_k) = \begin{cases}
\Psi_k&\text{if }r=0,\\
(-1)^{k-1}&\text{if }r=k\text{ and }k>0,\\
0&\text{otherwise.}
\end{cases}
\end{equation}
We may now calculate the
action of $\BB_m$ on the product $f\Psi_k$ where $f \in \Nsym$:
\begin{gather}
\BB_m(f \Psi_k) = \sum_{i \geq 0} (-1)^i H_{m+i} \sum_{r=0}^i M_{\op1^{i-r}\cl}^\perp(f) M_{\op1^r\cl}^\perp(\Psi_k) = \BB_m(f) \Psi_k - \BB_{m+k}(f)~.
\qedhere
\end{gather}
\end{proof}

\begin{Lemma}\label{puppy}
For $k \geq 1$,
\[
\Psi_k = \sum_{\beta \models k} (-1)^{\ell(\beta)+1} \fS_\beta~.
\]
\end{Lemma}
\begin{proof}
This follows by using Theorem \ref{slime} in the ribbon expansion of
$\Psi_k$ from Equation \eqref{psi}.  There is exactly one standard immaculate tableau
of shape $\beta \models k$ of length $j$ with descent composition equal to 
$\op1^{j-1},k-j+1\cl$.  That tableau
has entries $1,2,\ldots,j$ down the first column with $j+1, j+2,\ldots,k$ placed in
such a way as to not create any other descents.  We use this interpretation to
conclude that
\begin{equation}\label{toenails}
R_{\op1^{j-1},k-j+1\cl} = \sum_{\substack{\beta \models k\\\ell(\beta) = j}} \fS_{\beta}~.\qedhere
\end{equation}
\end{proof}

\begin{proof}[Proof of Theorem \ref{waffle}]
Theorem \ref{waffle} follows by inductively applying Lemma \ref{fishtaco} to the 
expression $\fS_\alpha \Psi_k = \BB_{\alpha_1}( \BB_{\alpha_2}( \cdots
\BB_{\alpha_{\ell(\alpha)}}(1)\cdots))\Psi_k$ and using Lemma \ref{puppy}
as the base case.
\end{proof}

\begin{Corollary}\label{chocolatewaffle}
For a composition $\alpha$ and a positive integer $k$,
\[
\fS_\alpha \Psi_k
= \sum_{j=1}^{k} \fS_{\op\alpha_1, \dots, \alpha_{\ell(\alpha)}, \begin{tiny}\underbrace{0, 0, \dots, 0}_{j-1}, k \end{tiny}\cl}
+ \sum_{j=1}^{\ell(\alpha)} \fS_{\op\alpha_1, \alpha_2, \dots, \alpha_j+k, \dots, \alpha_{\ell(\alpha)}\cl}.
\] 
\end{Corollary}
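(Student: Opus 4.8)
The plan is to deduce the Corollary from Theorem~\ref{waffle} by a straightening argument, since the second sum $\sum_{j=1}^{\ell(\alpha)}\fS_{[\alpha_1,\dots,\alpha_j+k,\dots,\alpha_{\ell(\alpha)}]}$ is literally the same in the two statements. Everything therefore reduces to the single identity
\[
\sum_{j=1}^{k}\fS_{[\alpha,\,0^{j-1},\,k]}=\sum_{\beta\vDash k}(-1)^{\ell(\beta)-1}\fS_{[\alpha,\beta]},
\]
whose right-hand side is the first sum in Theorem~\ref{waffle} (matching the signs in the Example following it). I would in fact prove the finer, length-graded statement
\[
\fS_{[\alpha,\,0^{j-1},\,k]}=(-1)^{j-1}\sum_{\substack{\beta\vDash k\\ \ell(\beta)=j}}\fS_{[\alpha,\beta]}
\]
and then sum over $j=1,\dots,k$. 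Using $\fS_{[\alpha,X]}=\BB_{\alpha_1}\cdots\BB_{\alpha_{\ell(\alpha)}}\fS_X$ together with linearity of the creation operators, it is enough to treat the empty prefix, i.e.\ to prove this identity with $\alpha$ deleted.

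The first ingredient I would establish from \eqref{skunk} is the two-part telescoping relation $\sum_{a=0}^{c}\fS_{[\gamma,a,c-a]}=\fS_{[\gamma,c]}$, valid for any prefix $\gamma$. For $\gamma$ empty this is immediate from $\fS_{[a,c-a]}=H_aH_{c-a}-H_{a+1}H_{c-a-1}$, whose sum telescopes (using $H_{-1}=0$) to $H_c=\fS_{[c]}$; the general prefix follows by applying $\BB_{\gamma_1}\cdots\BB_{\gamma_r}$. Combined with $\fS_{[\gamma,0]}=\fS_\gamma$ (which holds since $\BB_0 1=H_0=1$), isolating the $a=0$ term yields the single-zero rule $\fS_{[\gamma,0,c]}=-\sum_{a=1}^{c-1}\fS_{[\gamma,a,c-a]}$.

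Writing $C_r$ for $\sum_{\beta\vDash k,\ \ell(\beta)=r}\fS_\beta$ and using $\fS_{[0^{j-1},k]}=\BB_0\fS_{[0^{j-2},k]}$, the length-graded identity becomes an induction on $j$ resting on the single relation
\[
\BB_0\,C_r=-C_{r+1},\qquad\text{equivalently}\qquad \sum_{\substack{a_1\ge 0,\ a_2,\dots,a_{r+1}\ge 1\\ a_1+\cdots+a_{r+1}=k}}\fS_{[a_1,\dots,a_{r+1}]}=0.
\]
This vanishing is the main obstacle, and it is genuinely subtle: it cannot be obtained merely by iterating the two relations above, because combining the telescoping rule (applied to the last two parts, restricted to positive values) with the single-zero rule simply reproduces the same sum, the two operations being mutually inverse. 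Instead I would expand each $\fS_{[a_1,\dots,a_{r+1}]}$ via \eqref{skunk} and construct a sign-reversing involution on the resulting signed set of pairs (non-negative sequence, permutation); the crucial and delicate point is that the cancellation is driven entirely by the collapses $H_0=1$, which identify different exponent vectors $(b_1,\dots,b_{r+1})$ with the same monomial $H_\delta$. Alternatively one could work directly with the creation operators of \eqref{operdef}, establishing a commutation relation that pushes $\BB_0$ past the remaining operators and showing the correction terms assemble to $-C_{r+1}$; I expect the involution to be the cleaner of the two routes.
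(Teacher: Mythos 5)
Your reduction is exactly the paper's: the Corollary follows from Theorem~\ref{waffle} once one has the length-graded identity $\fS_{[\alpha,0^{j-1},k]}=(-1)^{j+1}\sum_{\ell(\beta)=j}\fS_{[\alpha,\beta]}$, and stripping the prefix $\alpha$ by writing $\fS_{[\alpha,X]}=\BB_{\alpha_1}\cdots\BB_{\alpha_{\ell(\alpha)}}\fS_X$ is also how the paper proceeds. (You are also right that the sign in the statement of Theorem~\ref{waffle} should read $(-1)^{\ell(\beta)-1}$, as the example following it confirms.) The problem is that your argument stops at the one step that carries all the content. The relation $\BB_0\bigl(\sum_{\ell(\beta)=r}\fS_\beta\bigr)=-\sum_{\ell(\gamma)=r+1}\fS_\gamma$, equivalently the vanishing of $\sum\fS_{[a_1,\dots,a_{r+1}]}$ over $a_1\ge 0$ and $a_2,\dots,a_{r+1}\ge 1$, is precisely what you call ``the main obstacle,'' and you do not prove it: you name two candidate strategies (a sign-reversing involution on the Jacobi--Trudi expansion, or a commutation relation pushing $\BB_0$ past the other creation operators) and carry out neither, yourself flagging the cancellation driven by $H_0=1$ as ``crucial and delicate.'' Your concern is well-founded, since the telescoping and single-zero rules you do establish only control a zero in the second-to-last slot, whereas here the zero sits in front of $r$ positive parts; they suffice only for $r=1$. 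As written, this is a genuine gap, not a routine verification.

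For comparison, the paper closes exactly this gap by leaving the immaculate basis (Lemma~\ref{doggie}): it first identifies $\sum_{\ell(\beta)=j}\fS_\beta$ with a single ribbon, $(-1)^{j+1}R_{[1^{j-1},k-j+1]}$, via Theorem~\ref{slime} (there is exactly one standard immaculate tableau of each shape $\beta$ with descent composition $[1^{j-1},k-j+1]$), and then computes $\BB_0$ on ribbons directly from the explicit action of $M_{1^i}^\perp$ on the ribbon basis, obtaining $\BB_0(R_{[1^j,d]})=-R_{[1^{j+1},d-1]}$ after a completely explicit cancellation. To complete your version you should either carry out your involution in full detail or adopt this ribbon detour, which converts the delicate cancellation into a finite, checkable computation.
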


\begin{proof} This expression will follow by showing
that $\fS_{\op0^{j-1},k\cl} = (-1)^{j-1} R_{\op 1^{j-1},k\cl}$.
Once this is done, Equation \eqref{toenails} implies that
\[
\Psi_k = \sum_{j=1}^k \fS_{\op0^{j-1},k\cl}~.
\]
The statement of this Corollary then follows by inductively applying Lemma \ref{fishtaco}
to the expression $\fS_\alpha \Psi_k = \BB_{\alpha_1}( \BB_{\alpha_2}( \cdots
\BB_{\alpha_{\ell(\alpha)}}(1)\cdots))\Psi_k$ with this formula as the
base case.

We have from \cite[Lemma 2.5]{BBSSZ} for non-negative integers $i$ and $j$,
\[
M_{\op 1^i\cl}^\perp (R_{\op 1^j\cl}) =
\begin{cases}
R_{\op 1^{j-i}\cl}&\hbox{ if }i\leq j\\
0&\hbox{ if }i>j~.
\end{cases}
\]
A direct computation shows that for $j>0$ that $\BB_0(R_{\op1^j\cl}) = 0$
and for $j\geq0$, $\BB_1(R_{\op1^j\cl}) = R_{\op1^{j+1}\cl}$.
We also have from \cite[Lemma 3.4]{BBSSZ} that
$$\BB_m(f\HH_s) = \BB_m(f)\HH_s - \BB_{m+1}(f) \HH_{s-1}.$$
Now the following expansion of a ribbon follows from Equation \eqref{productribbon}:
\[
R_{\op 1^{j-1},k-j+1\cl} = R_{\op 1^{j-1}\cl} \HH_{k-j+1} -
R_{\op 1^{j-2}\cl} \HH_{k-j+2} + \cdots + (-1)^{j-1} H_{k}~.
\]
Apply $\BB_0$ to both sides of this equation
and simplify the result to conclude that
$$\BB_0\left(R_{\op 1^{j-1},k-j+1\cl}\right) = - R_{\op 1^j,k-j\cl}.$$
We then conclude by induction that $\fS_{\op0^{j-1},k\cl}
= (-1)^{j-1} R_{\op 1^{j-1},k\cl}$.
\end{proof}

\begin{Remark}
Note that Corollary \ref{chocolatewaffle} is a non-commutative analogue of
\cite[Section I.3, Example 11, (1), p. 48]{Mac} which is used in
as an intermediate step in
the proof of the Murnaghan-Nakayama rule.
\end{Remark}

\begin{Example} We graphically
illustrate Corollary \ref{chocolatewaffle}
with $\alpha = \op1,3,2\cl$ and $k=3$.
\[
\begin{array}{cccccccccccccccccccccc}
\fS_{132}\Psi_3 & \!\!\!\!\!\!\!=\!\!\!\!\!\!\! & \fS_{132003} & \!\!\!\!\!\!\!+\!\!\!\!\!\!\! & \fS_{13203} & \!\!\!\!\!\!\!+\!\!\!\!\!\!\! & \fS_{1323} & \!\!\!\!\!\!\!+\!\!\!\!\!\!\! & \fS_{135} & \!\!\!\!\!\!\!+\!\!\!\!\!\!\! & \fS_{162} & \!\!\!\!\!\!\!+\!\!\!\!\!\!\! & \fS_{432}. \\[0.05in]
& &
\tikztableaunano{{\boldentry X},{\boldentry X, \boldentry X, \boldentry X},{\boldentry X, \boldentry X},{},{},{\null,\null,\null}} & &
\tikztableaunano{{\boldentry X},{\boldentry X, \boldentry X, \boldentry X},{\boldentry X, \boldentry X},{},{\null,\null,\null}} & &
\tikztableaunano{{\boldentry X},{\boldentry X, \boldentry X, \boldentry X},{\boldentry X, \boldentry X},{\null,\null,\null}}
& &\tikztableaunano{{\boldentry X},{\boldentry X, \boldentry X, \boldentry X},{\boldentry X, \boldentry X,,,}} & &
\tikztableaunano{{\boldentry X},{\boldentry X, \boldentry X, \boldentry X,,,},{\boldentry X, \boldentry X}} & &
\tikztableaunano{{\boldentry X,,,},{\boldentry X, \boldentry X, \boldentry X},{\boldentry X, \boldentry X}} & & \\ 
\end{array}
\]
\end{Example}

\section{The dual Murnaghan-Nakayama rule for immaculate functions}\label{sec:dualmnrule}

The Hopf algebra of symmetric functions is a self-dual Hopf algebra, and under
this duality the Schur basis is self dual.
Therefore, in addition to describing multiplication by $p_k$ on Schur
functions, the classical Murnaghan-Nakayama rule also describes
the adjoint operator to multiplication by $p_k$, denoted $p_k^\perp$.

The adjoint operators of $\Nsym$ are of the form $F^\perp$ where $F \in \Qsym$
and in $\Qsym$ we have $M_r=p_r$.
Here we formulate a dual version of the Murnaghan-Nakayama rule for immaculate
functions, which describes the action of $M_r^\perp$ on an immaculate function.

\begin{Theorem} \label{th:skewbyM}  For a sequence of integers $\alpha$
    and a composition $\beta$,
    $$M_\beta^\perp \fS_\alpha = \sum_{\gamma} \fS_\gamma$$
    where the sum is over all sequences of integers $\gamma$ of length
    $\ell(\alpha)$ such that removing the zeros from $\alpha - \gamma$ yields
    the composition $\beta$.
\end{Theorem}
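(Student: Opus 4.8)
The plan is to prove the identity by induction on $\ell(\alpha)$, after first establishing a commutation relation between $M_\beta^\perp$ and the creation operator $\BB_m$. The crucial structural input is that $\Qsym$ is commutative, so all skewing operators commute: $M_\delta^\perp M_\epsilon^\perp = (M_\epsilon M_\delta)^\perp = (M_\delta M_\epsilon)^\perp = M_\epsilon^\perp M_\delta^\perp$. I combine this with the elementary relation
\[
M_\beta^\perp(H_n F) = H_n\, M_\beta^\perp(F) + H_{n-\beta_1}\, M_{(\beta_2,\dots,\beta_{\ell(\beta)})}^\perp(F),
\]
which follows from the skewing Leibniz rule applied to the deconcatenation coproduct $\Delta(M_\beta)=\sum_{[\delta,\epsilon]=\beta} M_\delta \otimes M_\epsilon$, together with the facts $M_{[r]}^\perp(H_n)=H_{n-r}$ and $M_\delta^\perp(H_n)=0$ for $\ell(\delta)>1$. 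With these in hand I can compute directly from $\BB_m = \sum_{i\ge 0}(-1)^i H_{m+i} M_{1^i}^\perp$.

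First I would derive the key relation
\[
M_\beta^\perp\, \BB_m \;=\; \BB_m\, M_\beta^\perp \;+\; \BB_{m-\beta_1}\, M_{(\beta_2,\dots,\beta_{\ell(\beta)})}^\perp.
\]
Applying $M_\beta^\perp$ termwise to $\BB_m F$ and using the $H_n$-commutation above produces two families of terms; in each, the surviving $M_{1^i}^\perp$ is slid past $M_\beta^\perp$ (respectively $M_{\beta'}^\perp$) by commutativity of $\Qsym$, and the two resummed series are precisely $\BB_m$ and $\BB_{m-\beta_1}$ applied to the skewed inputs. Here I write $\beta'=(\beta_2,\dots,\beta_{\ell(\beta)})$, with the convention $M_\emptyset^\perp=\id$, so that when $\ell(\beta)=1$ the second term is simply $\BB_{m-\beta_1}$. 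No convergence issue arises since each $M_{1^i}^\perp$ lowers degree, so only finitely many $i$ contribute on a fixed input.

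Then I would induct on $m=\ell(\alpha)$. Writing $\fS_\alpha = \BB_{\alpha_1}\fS_{\alpha'}$ with $\alpha'=(\alpha_2,\dots,\alpha_m)$ and applying the commutation relation gives
\[
M_\beta^\perp \fS_\alpha = \BB_{\alpha_1}\bigl(M_\beta^\perp \fS_{\alpha'}\bigr) + \BB_{\alpha_1-\beta_1}\bigl(M_{\beta'}^\perp \fS_{\alpha'}\bigr).
\]
By the inductive hypothesis the first term is $\sum \fS_{(\alpha_1,\gamma')}$ over sequences $\gamma'$ for which removing the zeros of $\alpha'-\gamma'$ yields $\beta$, and the second is $\sum \fS_{(\alpha_1-\beta_1,\gamma')}$ over $\gamma'$ for which removing the zeros of $\alpha'-\gamma'$ yields $\beta'$; note $\BB_{\alpha_1-\beta_1}\fS_{\gamma'}=\fS_{(\alpha_1-\beta_1,\gamma')}$ is legitimate even when $\alpha_1-\beta_1\le 0$, as the index is an integer sequence. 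These two families match exactly the dichotomy for an admissible $\gamma$ of length $m$: either $\alpha_1-\gamma_1=0$, so the first difference contributes no part and the remaining differences must strip to all of $\beta$, or $\alpha_1-\gamma_1=\beta_1\neq 0$, so the first stripped part is $\beta_1$ and the remaining differences strip to $\beta'$. Since $\beta_1>0$ these cases are disjoint and exhaustive. The base case $\ell(\alpha)=0$ holds because $\fS_\emptyset=1$ and $M_\beta^\perp 1$ equals $1$ when $\beta=\emptyset$ and $0$ otherwise, matching the empty-or-singleton set of admissible $\gamma$.

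The main obstacle I anticipate is pinning down the commutation relation in exactly this two-term form — in particular checking that the reindexing collapses the resulting double series into clean copies of $\BB_m$ and $\BB_{m-\beta_1}$ — and taking care that the argument remains valid when the intermediate indices $\alpha_1-\beta_1$ and the final $\gamma$ carry non-positive entries, since the identity is asserted for arbitrary integer sequences. Once the commutation relation is secured, the two-case recursion and its translation into the ``remove the zeros of $\alpha-\gamma$'' description are routine.
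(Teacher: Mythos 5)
Your proposal is correct and follows essentially the same route as the paper: the same commutation relation $M_\beta^\perp \circ \BB_m = \BB_m \circ M_\beta^\perp + \BB_{m-\beta_1}\circ M_{(\beta_2,\dots,\beta_{\ell(\beta)})}^\perp$, derived from the deconcatenation coproduct of $M_\beta$ and the commutativity of $\Qsym$, followed by induction on $\ell(\alpha)$ via $\fS_\alpha = \BB_{\alpha_1}\cdots\BB_{\alpha_{\ell(\alpha)}}(1)$. Your write-up is in fact somewhat more explicit than the paper's about the two-case bookkeeping (zero versus nonzero first entry of $\alpha-\gamma$) and about the validity of the argument for non-positive indices, but the underlying argument is identical.
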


\begin{proof}
Recall that in \cite[Lemma 2.4]{BBSSZ} we give a Hopf algebra computation that can be used to compute the commutation
relation between $G^\perp$ for $G \in \Qsym$ and multiplication by $f$ in $\Nsym$.   In particular, since $\Delta(M_\beta)
= \sum_{\op\gamma,\tau\cl = \beta} M_\gamma \otimes M_\tau$, and $M_r^\perp(H_m) = H_{m-r}$, and $M_\beta^\perp( H_m) = 0$ when $\ell(\beta)>1$,
we have that
\begin{gather}
    \label{eqn:MperpHCommutation}
    M_\beta^\perp \circ H_m = H_{m-\beta_1}\circ M_{\op \beta_2, \ldots, \beta_{\ell(\beta)}\cl}^\perp + H_m \circ M_\beta^\perp.
\end{gather}
Now we can use this relation to compute the commutation relation between $G^\perp$ and the creation operator for the immaculate basis.
\begin{align*}
M_\beta^\perp \circ \BB_m &= \sum_{d\geq0} (-1)^d M_\beta^\perp \circ H_{m+d} M_{\op1^{d}\cl}^\perp\\
&= \sum_{d\geq0} (-1)^d (H_{m+d-\beta_1} \circ M_{\op\beta_2, \ldots, \beta_{\ell(\beta)}\cl}^\perp + H_{m+d} \circ M_\beta^\perp) M_{\op1^{d}\cl}^\perp\\
&= \BB_{m-\beta_1}\circ M_{\op\beta_2, \ldots, \beta_{\ell(\beta)}\cl}^\perp + \BB_m \circ M_\beta^\perp~.
\end{align*}
The result follows by induction since
$\fS_\alpha = \BB_{\alpha_1} \BB_{\alpha_2} \cdots \BB_{\alpha_{\ell(\alpha)}}(1)$ and $M_r^\perp(1) =0$.
\end{proof}

As a consequence of this computation, the dual Murnaghan-Nakayama rule is
a special case since $p_r = M_r$.
\begin{Corollary}\label{cor:MrPerpAndImmaculate}
If $\alpha$ is a sequence of integers and $r>0$, then
\[
M_r^\perp \fS_\alpha = \sum_{i = 1}^{\ell(\alpha)} \fS_{\op\alpha_1,\ldots,\alpha_i-r,\ldots,\alpha_{\ell(\alpha)}\cl}.
\]
\end{Corollary}

\begin{moredetails}
We offer a direct proof of Corollary \ref{cor:MrPerpAndImmaculate}
pointed out to us by Darij Grinberg.

\begin{proof}[Direct proof of Corollary \ref{cor:MrPerpAndImmaculate}]
    Taking $\beta = [r]$ in Equation~\eqref{eqn:MperpHCommutation},
    we have that
    \begin{gather*}
        M_r^\perp \circ H_m = H_{m-r} + H_m \circ M_r^\perp,
    \end{gather*}
    where $H_m$ represents ``multiplication by $H_m$''.
    Hence, we have that $M_r$ is a derivation, since for any $X \in \Nsym$,
    \begin{gather*}
        M_r^\perp (H_m X) = M_r^\perp(H_m) X + H_m M_r^\perp(X).
    \end{gather*}
    The result now follows by applying this property
    recursively to Equation~\eqref{skunk}:
    \begin{align*}
        M_r^\perp(\fS_\alpha)
        & = \sum_{i = 1}^{\ell(\alpha)}
            \sum_{\sigma \in S_{\ell(\alpha)}} (-1)^\sigma
            \HH_{\alpha_1 + \sigma_1 - 1}
            \cdots
            M_r^\perp(\HH_{\alpha_i + \sigma_i - i})
            \cdots
            \HH_{\alpha_{\ell(\alpha)} + \sigma_{\ell(\alpha)} - m}
        \\ & = \sum_{i = 1}^{\ell(\alpha)}
            \sum_{\sigma \in S_{\ell(\alpha)}} (-1)^\sigma
            \HH_{\alpha_1 + \sigma_1 - 1}
            \cdots
            \HH_{(\alpha_i - r) + \sigma_i - i}
            \cdots
            \HH_{\alpha_{\ell(\alpha)} + \sigma_{\ell(\alpha)} - m}
        \\ & = \sum_{i = 1}^{\ell(\alpha)}
            \fS_{\op\alpha_1,\ldots,\alpha_i-r,\ldots,\alpha_{\ell(\alpha)}\cl}.
            \qedhere
    \end{align*}
\end{proof}
\end{moredetails}

Note that in the last result, if $r < \alpha_i$ for all $1 \leq i \leq \ell(\alpha)$, we obtain an immaculate-positive expansion. If not, we need to potentially consider immaculate functions
indexed by integer sequences.

\section{The product of immaculate functions}\label{sec:LRimm}
 
In \cite{BBSSZ}, we conjectured that the product of two immaculate functions expands positively in the
immaculate basis when the composition indexing the right immaculate function is a partition.
The right Pieri rule is a special case of this product.
A combinatorial rule for the expansion of the left Pieri rule 
that explains the occurrence of the negative signs is handled in \cite{BSOZ,SXL}.

The expansion of $\fS_\alpha \fS_\lambda$, where $\alpha$ is a composition and
$\lambda$ is a partition, provides new
insight on the multiplication of Schur functions (see for example Corollary \ref{cor:newsym}).
This rule has interesting combinatorics and 
a beautiful geometric interpretation as seen in Section~\ref{niceass}.

The main goal of this section is to give an explicit positive combinatorial rule for the product of two immaculate functions 
 when the composition indexing the right immaculate function is a partition. Our proof follows some techniques developed in \cite{BG90} and
\cite{BS02}.

The general rule for the multiplication of two immaculate functions is out of the scope of this paper. 
In \cite{SXL}, S.~X.~Li extended our involution to explain the left Pieri rule,
but this extension does not provide the necessary insight to provide an expression for the
general product of two immaculate functions indexed by compositions.
 
\subsection{The combinatorial formula for the structure coefficients}
\begin{Definition}
A skew immaculate tableau $T$ will be called \emph{Yamanouchi} if the reading word $\read(T)$ of $T$ is Yamanouchi.
For compositions $\alpha$, $\beta$ and a partition $\lambda$ such that $|\alpha| + |\lambda| = |\beta|$
we let $C_{\alpha, \lambda}^\beta$, the \emph{immaculate Littlewood--Richardson coefficient}, denote the number
of Yamanouchi immaculate tableaux of shape $\beta / \alpha$ and content~$\lambda$.
\end{Definition}
 
\begin{Example} \label{ex:smallex}
Let $\alpha = \op 1,2\cl$, $\lambda = \op 3,2,1\cl$ and $\beta = \op 3,4,2\cl$.
The tableaux drawn below are the only Yamanouchi immaculate tableaux, so $C_{\alpha,\lambda}^\beta = 2$.
$$\tikztableausmall{{\boldentry X,1,1},{\boldentry X,\boldentry X,1,2},{2,3}}
\hspace{.4in}\tikztableausmall{{\boldentry X,1,1},{\boldentry X,\boldentry X,2,2},{1,3}}$$
\end{Example}

\begin{Theorem}\label{ImmLRrule}
For a composition $\alpha$ and a partition $\lambda$, \[ \fS_\alpha \fS_\lambda = \sum_{\beta \models |\alpha|+|\lambda|}
C_{\alpha, \lambda}^\beta \fS_\beta.\]
\end{Theorem}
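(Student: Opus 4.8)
The plan is to reduce the product $\fS_\alpha \fS_\lambda$ to an iterated application of the ribbon product rule (Theorem \ref{slime}), using the fact that $\fS_\lambda$, for $\lambda$ a \emph{partition}, can be written in the ribbon basis. The key observation is that for a partition $\lambda$, expanding $\fS_\lambda$ via the Jacobi--Trudi--type formula \eqref{skunk} and converting complete functions to ribbons should yield an expression $\fS_\lambda = \sum_\beta d_\beta^\lambda R_\beta$; more usefully, I would instead build $\fS_\lambda$ row by row. The idea is that multiplying $\fS_\alpha$ by $\fS_\lambda$ is, up to sign-reversing cancellations, the same as successively attaching the rows of $\lambda$, and each such step is governed by Theorem \ref{slime}. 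So the first step is to establish a clean formula expressing multiplication by $\fS_\lambda$ as a signed sum of multiplications by ribbons $R_\beta$ indexed by compositions $\beta$ refining or coarsening $\lambda$, analogous to the classical expansion of $s_\lambda$ into ribbon Schur functions.

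Next, I would apply Theorem \ref{slime} to each term $\fS_\alpha R_\beta$, obtaining a signed sum over standard immaculate tableaux of shape $\gamma/\alpha$ with prescribed descent composition $\beta$. At this point the coefficient of a fixed $\fS_\beta$ (I will rename the target composition to avoid clashing, say $\fS_\eta$) becomes an alternating sum over standard immaculate tableaux weighted by the signs coming from the ribbon expansion of $\fS_\lambda$. The heart of the argument is then a sign-reversing involution or a direct bijective collapse: I expect that the alternating sum over standard fillings, when the descent-composition data is aggregated across all the ribbons in the expansion of $\fS_\lambda$, reduces precisely to counting \emph{semistandard} Yamanouchi immaculate tableaux of shape $\eta/\alpha$ and content $\lambda$, which is the quantity $C_{\alpha,\lambda}^\eta$. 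This is the standardization/destandardization step familiar from the commutative Littlewood--Richardson rule: a semistandard tableau of content $\lambda$ corresponds to an equivalence class of standard tableaux, and the Yamanouchi condition on the reading word selects exactly the classes that survive the alternating cancellation.

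The main obstacle, and the step I would spend the most care on, is this last reduction, because in the non-commutative setting the first-column strictness condition in the definition of immaculate tableaux breaks the symmetry that makes the classical argument clean, and the descent compositions $\beta$ arising here are genuine compositions rather than partitions. I would therefore need to verify that the signed count of standard immaculate tableaux grouped by descent composition matches the semistandard count; concretely, I would set up a descent-preserving correspondence between a standard immaculate tableau together with its descent composition $\beta$ and a pair (semistandard immaculate tableau of content $\lambda$, standardization data), and check that the Yamanouchi reading-word condition is exactly preserved. The techniques of \cite{BG90} and \cite{BS02} cited by the authors suggest that the cleanest route is to phrase the cancellation as an explicit sign-reversing involution on non-Yamanouchi standard fillings, fixing precisely the fillings that standardize a Yamanouchi semistandard tableau. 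Proving that this involution is well-defined and respects the immaculate (strict first column) constraint is where the real work lies; the rest is bookkeeping built on Theorem \ref{slime} and Lemma \ref{doggie}.
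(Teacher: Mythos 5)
Your plan correctly identifies the overall architecture --- a signed expansion of $\fS_\lambda$, a term-by-term application of a product rule, and a sign-reversing involution whose fixed points are the Yamanouchi immaculate tableaux of content $\lambda$ --- but the two steps that carry all the content of the theorem are left unconstructed, so as written this is a gap rather than a proof. First, the route you choose (expand $\fS_\lambda$ in the ribbon basis and invoke Theorem \ref{slime}) forces you to work with \emph{standard} immaculate tableaux graded by descent composition and then destandardize; the paper avoids this entirely by expanding $\fS_\lambda$ in the $H$-basis via \eqref{skunk} and iterating the Pieri rule (Theorem \ref{thm:Pieri}), which directly produces \emph{semistandard} skew immaculate tableaux $T$ of content $\lambda+\sigma-Id$ with the transparent sign $(-1)^\sigma$, where $\sigma=c(T)-\lambda+Id$ is recoverable from $T$ itself. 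Your version would additionally require computing the ribbon-expansion coefficients of $\fS_\lambda$ (which you do not do, and which are an alternating convolution of the Jacobi--Trudi signs with the unitriangular $H$-to-$R$ change of basis) and then verifying that standardization is compatible with the cancellation; neither step is routine.

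Second, and more importantly, the sign-reversing involution is not ``bookkeeping'': it is the entire proof, and you have only asserted that one should exist. Note also that the cancellation is not quite what you describe, namely an involution on non-Yamanouchi fillings of a fixed content $\lambda$ selecting Yamanouchi classes; in the paper it pairs a tableau $T$ with a tableau $T'$ of the \emph{same shape} but generally different content, satisfying $\sigma(T')=t_r\,\sigma(T)$, so that terms with opposite Jacobi--Trudi signs cancel, and the fixed points are exactly the Yamanouchi tableaux with $\sigma(T)=Id$. Constructing such a map while preserving the strict first column is delicate: the paper passes to an auxiliary tableau $Y(T)$ (recording, for each entry $r$ in row $i$ of $T$, an entry $i$ in row $\sigma(T)(r)$ of $Y(T)$), defines an operation $\Theta$ on $Y(T)$ that exchanges the tails of two adjacent rows at the ``most nefarious'' cell, and then proves (Lemmas \ref{Ylemma}, \ref{permutationforT'} and \ref{lemma:Yinversexists}) that $\Theta$ is an involution whose preimage under $Y$ is again an immaculate tableau --- the verification that the first column is unchanged occupies most of the argument. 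Until you produce an involution with these properties (or an equivalent one on your standard fillings that also respects the descent-composition data), the proof is incomplete.
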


\begin{Example} \label{ex:prod1} We give an example with $\alpha = \op1,2\cl$ and $\lambda = \op 2,1\cl$.
\[
\begin{array}{ccccccccccccc}
\tikztableausmall{{\boldentry X},{\boldentry X,\boldentry X}}& &\tikztableausmall{{1,1},{2}}  & =   &
\tikztableausmall{{\boldentry X},{\boldentry X,\boldentry X},{1,1},{2}} & &
\tikztableausmall{{\boldentry X},{\boldentry X,\boldentry X,1},{1},{2}} & &
\tikztableausmall{{\boldentry X},{\boldentry X,\boldentry X,1},{1,2}} & &
\tikztableausmall{{\boldentry X,1},{\boldentry X,\boldentry X},{1},{2}} & &
\tikztableausmall{{\boldentry X,1},{\boldentry X,\boldentry X},{1,2}} \\
\fS_{12} & \hspace{-.2in}*\hspace{-.2in} &\fS_{21} & \hspace{-.2in}=\hspace{-.2in} & \fS_{1221} &\hspace{-.2in}+\hspace{-.2in}& \fS_{1311} &\hspace{-.2in}+\hspace{-.2in}&\,\fS_{132}&\hspace{-.2in}+\hspace{-.2in}&\,\fS_{2211}&\hspace{-.2in}+\hspace{-.2in}&\, \fS_{222}
\end{array}
\]
\[
\begin{array}{ccccccccccccc}
& & & &
 \tikztableausmall{{\boldentry X,1},{\boldentry X,\boldentry X,2},{1}}
 \tikztableausmall{{\boldentry X,1},{\boldentry X,\boldentry X,1},{2}} & &
 \tikztableausmall{{\boldentry X},{\boldentry X,\boldentry X,1,1},{2}} & &
 \tikztableausmall{{\boldentry X,1},{\boldentry X,\boldentry X,1,2}} & &
 \tikztableausmall{{\boldentry X,1,1},{\boldentry X,\boldentry X,2}} & &
  \tikztableausmall{{\boldentry X,1,1},{\boldentry X,\boldentry X},{2}} \\
& & &\hspace{-.2in}+\hspace{-.2in}&\,\ 2\fS_{231} &\hspace{-.2in}+\hspace{-.2in}&\, \fS_{141}&\hspace{-.2in}+\hspace{-.2in}&\, \fS_{24}&\hspace{-.2in}+\hspace{-.2in}&\, \fS_{33}&\hspace{-.2in}+\hspace{-.2in}&\, \fS_{321}
\end{array}
\]
\end{Example}

\begin{proof}[Proof of Theorem \ref{ImmLRrule}]
Let $m = \ell(\lambda)$.
Using \eqref{skunk}, we expand $\fS_\lambda$ in the product $ \fS_\alpha \fS_\lambda$:
\begin{equation}\label{expprod}
\fS_\alpha \fS_\lambda  
= \sum_{\sigma \in S_m} (-1)^\sigma \fS_\alpha \HH_{\op \lambda_1+\sigma_1 -1, \lambda_2 + \sigma_2 -2, \dots,\lambda_m + \sigma_m - m\cl},
\end{equation}
where $H_0 = 1$ and $H_i = 0$ if $i <0$.
An iterative use of the Pieri rule [Theorem~\ref{thm:Pieri}] gives
\begin{equation}
\fS_\alpha \HH_{\tau} = \sum_{sh(T)=\gamma/\alpha \atop c(T)=\tau} \fS_\gamma,
\end{equation}
where the sum is over skew immaculate tableaux $T$ and $c(T)$ denotes the content of $T$.
Substituting this back in \eqref{expprod} we get
\begin{equation}\label{expprodTfirst}
\fS_\alpha \fS_\lambda  = \sum_{\sigma \in S_m} \sum_{sh(T)=\gamma/\alpha \atop c(T)=\lambda+\sigma-Id} (-1)^\sigma \fS_\gamma.
\end{equation}
The double  sum on the right is over  pairs $(\sigma,T)$ where  $\sigma\in S_m$ and $T$ is a skew immaculate tableau of shape
$\gamma/\alpha$ and content $c(T) = \lambda+\sigma-Id$. Note that $\sigma$ can be re-constructed from $T$, since $\sigma=c(T)-\lambda+Id$. Let
\begin{equation}
  \label{defnTal}
  \Tal = \left\{T :
      \begin{array}{l}
          \text{$T$ is a skew immaculate tableau} \\
          \text{of inner skew shape $\alpha$ for which} \\
          \text{$c(T)-\lambda+Id$ is a permutation in $S_m$}
      \end{array}
  \right\}
\end{equation}
and for $T$ in $\Tal$, we denote the corresponding permutation by
\begin{equation}
  \label{defnsigma}
  \sigma(T)=c(T)-\lambda+Id.
\end{equation}
Thus, (\ref{expprodTfirst}) can be rewritten as
\begin{equation}\label{expprodT}
\fS_\alpha \fS_\lambda  = \sum_{T \in \Tal} (-1)^{\sigma(T)} \fS_{outsh(T)},
\end{equation}
where $outsh(T)$ is the outer shape of the skew tableau $T$.
 
The theorem will follow from a sign reversing involution on $\Tal$. More specifically,
in Definition \ref{def:Phi} we construct a map $\Phi$ on $\Tal$ with the following properties:
\begin{enumerate}
\item\label{conda} $\Phi(T) \in \Tal$ for $T \in \Tal$.
\item\label{condb} For all $T \in \Tal$, $\Phi^2(T)=T$.
\item\label{condc} If $T \in \Tal$ is Yamanouchi and $\sigma(T)=Id$, then $T$ is a fixed point of $\Phi$.
\item\label{condd} If $T \in \Tal$ is not Yamanouchi or $\sigma(T) \ne Id$, then $T' = \Phi(T) \in \Tal$ is such that:
\begin{enumerate}
\item  The shape of $T$ and $T'$ are equal.
\item $\sigma(T')=t_r \sigma(T)$ for an integer $r$ (where $t_r$ is the transposition which interchanges $r$ and $r+1$).
\end{enumerate}
\end{enumerate}
Properties \eqref{conda}, \eqref{condb}, \eqref{condc}, and \eqref{condd} are proven in Proposition \ref{done!}.
The result follows from the existence of such a map.
\end{proof}

\subsection{Properties and applications of the immaculate Littlewood-Richardson coefficients}
 
For two compositions $\alpha$ and $\nu$, let $\alpha + \nu$ denote the
composition obtained by component-wise addition of the parts of $\alpha$ and
$\nu$:
\begin{gather*}
    \alpha + \nu = \op \alpha_1 + \nu_1, \alpha_2 + \nu_2, \dots, \alpha_m + \nu_m \cl,
\end{gather*}
where $m = \max\{\ell(\alpha), \ell(\nu)\}$ and
where it is understood that
$\alpha_j = 0$ if $j > \ell(\alpha)$
and
$\nu = 0$ if $j > \ell(\nu)$.

Recall that $C_{\alpha, \lambda}^\beta$ denotes the immaculate
Littlewood--Richardson coefficient associated with the compositions $\alpha,
\beta$ and partition $\lambda$, which by Theorem~\ref{ImmLRrule} is the number
of Yamanouchi immaculate tableaux of shape $\beta / \alpha$ and
content~$\lambda$.

\begin{Corollary}\label{cor:newsym}
For compositions $\alpha,\beta,\nu$ and a partition $\lambda$ such that $\ell(\nu)\le\ell(\alpha)$,
  $$C_{\alpha,\lambda}^{\beta}  =  C_{\alpha+\nu,\lambda}^{\beta+\nu}.$$
\end{Corollary}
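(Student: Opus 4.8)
The plan is to prove this bijectively, by exhibiting an explicit content-preserving bijection between the two sets of Yamanouchi immaculate tableaux counted by the two coefficients. First I would observe that the skew shape $(\beta+\nu)/(\alpha+\nu)$ is obtained from $\beta/\alpha$ simply by translating row $i$ horizontally to the right by $\nu_i$ cells: in row $i$ the inner boundary moves from $\alpha_i$ to $\alpha_i+\nu_i$ and the outer boundary from $\beta_i$ to $\beta_i+\nu_i$, so the number of cells in that row, namely $\beta_i-\alpha_i$, is unchanged. This suggests defining a map $\Phi_\nu$ that sends a tableau $T$ of shape $\beta/\alpha$ to the tableau $T'$ of shape $(\beta+\nu)/(\alpha+\nu)$ obtained by shifting the entries of row $i$ rigidly to the right by $\nu_i$, keeping their left-to-right order. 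The inverse is the analogous leftward shift by $\nu$, so $\Phi_\nu$ is a bijection on the underlying fillings as soon as one checks that it preserves the defining conditions; the content is obviously preserved since no entry is altered.

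Second, I would verify that $\Phi_\nu$ carries immaculate tableaux to immaculate tableaux. The row condition (weakly increasing from left to right) is immediate, since within each row the sequence of entries is untouched. The delicate condition is the strict increase down the first column, and here the hypothesis $\ell(\nu)\le\ell(\alpha)$ is exactly what is needed. The cells lying in the first column of $\beta/\alpha$ are precisely those in rows $i$ with $\alpha_i=0$, that is, rows $i>\ell(\alpha)$. Since $\ell(\nu)\le\ell(\alpha)$, we have $\nu_i=0$ for every such $i$, so these rows are not shifted by $\Phi_\nu$ and they remain the first-column cells of $(\beta+\nu)/(\alpha+\nu)$ carrying the same entries. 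Thus the first column, as a filled set of cells, is identical in $T$ and $T'$, and its strict-increasing property is preserved.

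Third, I would check that the Yamanouchi condition is preserved. The reading word $\read(T)$ depends only on the entries of each row, read from right to left, together with the top-to-bottom order of the rows, and not on the horizontal placement of the rows. Since $\Phi_\nu$ only translates rows horizontally, without permuting rows or reordering entries within a row, one has $\read(T')=\read(T)$, so $T'$ is Yamanouchi exactly when $T$ is. Combining these observations, $\Phi_\nu$ restricts to a bijection between the Yamanouchi immaculate tableaux of shape $\beta/\alpha$ and content $\lambda$ and those of shape $(\beta+\nu)/(\alpha+\nu)$ and content $\lambda$, which yields $C_{\alpha,\lambda}^\beta=C_{\alpha+\nu,\lambda}^{\beta+\nu}$. (When $\beta/\alpha$ is not a genuine skew shape, i.e. $\beta_i<\alpha_i$ for some $i$, both sides are zero, since adding the same $\nu_i$ to both boundaries preserves the sign of $\beta_i-\alpha_i$.)

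I expect the only real subtlety to lie in the first-column condition of the second step: this is the single place where the hypothesis $\ell(\nu)\le\ell(\alpha)$ is used, and it is essential. If $\nu$ had a positive part in a row where $\alpha$ vanishes, that row's cell would be dragged out of the first column and the strict-increase condition could change, so the two coefficients would genuinely differ. Everything else is a routine verification that horizontal translation leaves rows, contents, and reading words intact.
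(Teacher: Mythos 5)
Your proof is correct and is essentially identical to the paper's: both establish the result by the row-preserving bijection that translates row $i$ by $\nu_i$, noting that the row fillings, the reading word, and (thanks to $\ell(\nu)\le\ell(\alpha)$) the first column are all unchanged. Your write-up simply spells out in more detail the first-column verification that the paper states in passing.
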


\begin{proof} This follows by a direct bijection of the two sets counted by $C_{\alpha,\lambda}^{\beta}$ and
$C_{\alpha+\nu,\lambda}^{\beta+\nu}$.
If we have an immaculate tableau $T$ of shape $\beta /\alpha$ and content $\lambda$ in the first set that is
Yamanouchi, we simply map it to the immaculate tableau
of shape $\op \beta+\nu \cl /  \op \alpha +\nu\cl $  (which has the same row distributions, and same first column as $T$)
and content $\lambda$ preserving the rows of $T$. The result is clearly also Yamanouchi since the row word is preserved.
The inverse map is similar.
\end{proof}
 
\begin{Remark}\label{grape}
This implies that  the coefficient $C_{\alpha,\lambda}^\beta$ can be deduced just from knowing
$C_{\op 1^{\ell(\alpha)}\cl\lambda}^{\beta-\alpha+\op 1^{\ell(\alpha)}\cl}$.
\end{Remark}

\begin{Definition}
    For a sequence of integers $\beta=\op\beta_1,\beta_2,\ldots,\beta_m\cl$ we
    define the Schur function indexed by $\beta$ as
    $s_\beta = \det \big[ h_{\beta_i+j-i} \big]_{1\le i,j\le m}$,
    where $h_0 = 1$ and $h_k = 0$ for $k < 0$.
 \end{Definition}
 
\begin{Example}
Let $\lambda = \op 2,1\cl$ and $\alpha = \op 1,1\cl$. Notice that there are the same number of terms in the expression $\fS_{11}\fS_{21}$
as in the expression $\fS_{12}\fS_{21}$ from Example \ref{ex:prod1}.
\[
\fS_{11}\fS_{21} = \fS_{1121} + \fS_{1211} + \fS_{122} + \fS_{131} + \fS_{2111} + \fS_{212} + 2\fS_{221}
+ \fS_{23} + \fS_{311} + \fS_{32}.
\]
We can compute the product of any Schur function with two parts by the Schur function $s_{2,1}$. For instance, $\fS_{32}\fS_{21} = $
\[
\fS_{3221} + \fS_{3311} + \fS_{332} + \fS_{341} + \fS_{4211} +
\fS_{422} + 2\fS_{431} + \fS_{44} + \fS_{521} + \fS_{53}.
\]
Applying the projection to symmetric functions tells us that
\begin{eqnarray*}
s_{32}s_{21} 	& = & s_{3221} + s_{3311} + s_{332} + s_{341} + s_{4211} + s_{422} + 2s_{431} + s_{44} + s_{521} + s_{53} \\
			& = & s_{3221} + s_{3311} + s_{332} + 0 + s_{4211} + s_{422} + 2s_{431} + s_{44} + s_{521} + s_{53},
\end{eqnarray*}
since $s_{3,4,1} = 0$.
\end{Example}
 
We conjecture the following generalization to Corollary~\ref{cor:newsym}.
\begin{Conjecture}[proved by S. X. Li in \cite{SXL}]
    If $\alpha,\beta,\nu$ and $\delta$ are compositions such that
    $\ell(\nu)\le\ell(\alpha)$, then the multiplicative structure constants
    $C_{\alpha, \delta}^{\beta}$ for the immaculate basis
    satisfy
    $$ C_{\alpha,\delta}^{\beta}  =  C_{\alpha+\nu,\delta}^{\beta+\nu}. $$
\end{Conjecture}
 
 Applying the projection map to Theorem  \ref{ImmLRrule} yields the following result about Schur functions.
\begin{Corollary}\label{XImmLRrule}
For two partitions $\mu$ and $\lambda$, \[s_\mu s_\lambda = \sum_{\beta \models |\mu|+|\lambda|}
C_{\mu, \lambda}^\beta s_\beta.\]
\end{Corollary}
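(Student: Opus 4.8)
The plan is to apply the projection homomorphism $\chi\colon \Nsym \to \Lambda$ to the identity of Theorem~\ref{ImmLRrule}, taking the composition $\alpha$ there to be the partition $\mu$. Since $\chi$ is an algebra homomorphism sending $H_i \mapsto h_i$, and since $\chi(\fS_\nu) = s_\nu$ for any partition $\nu$, the left-hand side $\fS_\mu \fS_\lambda$ maps to $\chi(\fS_\mu)\chi(\fS_\lambda) = s_\mu s_\lambda$. The right-hand side maps to $\sum_\beta C_{\mu,\lambda}^\beta\, \chi(\fS_\beta)$, so everything reduces to identifying $\chi(\fS_\beta)$ for an arbitrary composition $\beta$ occurring in the sum.

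The key step is therefore to establish that $\chi(\fS_\beta) = s_\beta$ for \emph{every} composition $\beta$, where $s_\beta$ is the composition-indexed Schur function $\det\big|h_{\beta_i+j-i}\big|_{1\le i,j\le m}$ of the preceding Definition. To see this I would apply $\chi$ directly to the Jacobi--Trudi--type expansion \eqref{skunk}. Using $\chi(H_\gamma)=h_{\sort(\gamma)}$ together with commutativity of the $h_i$ (so that $h_{\sort(\gamma)}=h_{\gamma_1}h_{\gamma_2}\cdots h_{\gamma_m}$, with the conventions $h_0=1$ and $h_{-m}=0$), the image of \eqref{skunk} becomes
\[
\chi(\fS_\beta) = \sum_{\sigma\in S_m}(-1)^\sigma\, h_{\beta_1+\sigma_1-1}\,h_{\beta_2+\sigma_2-2}\cdots h_{\beta_m+\sigma_m-m},
\]
which is exactly the permutation expansion of $\det\big|h_{\beta_i+j-i}\big|_{1\le i,j\le m}$, namely $s_\beta$.

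Combining these observations gives
\[
s_\mu s_\lambda = \chi(\fS_\mu\fS_\lambda) = \sum_{\beta\models|\mu|+|\lambda|}C_{\mu,\lambda}^\beta\,\chi(\fS_\beta) = \sum_{\beta\models|\mu|+|\lambda|}C_{\mu,\lambda}^\beta\,s_\beta,
\]
which is the asserted formula.

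There is no genuine obstacle here; the content is entirely inherited from Theorem~\ref{ImmLRrule}, and the only work is the bookkeeping identifying $\chi(\fS_\beta)$ with the determinant $s_\beta$. The one point that deserves care is interpretive rather than technical: the $s_\beta$ on the right are indexed by \emph{compositions}, and such determinants need not be Schur functions indexed by partitions. A given $s_\beta$ may vanish (as with $s_{3,4,1}=0$ in the preceding Example) or equal $\pm s_\nu$ for a partition $\nu$ after applying the standard determinantal straightening relations. Thus the statement should be read as an identity in $\Lambda$ obtained after such straightening, rather than as a manifestly positive expansion in the partition-indexed Schur basis.
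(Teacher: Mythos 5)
Your proof is correct and follows the same route as the paper, which simply applies the projection $\chi$ to Theorem~\ref{ImmLRrule} and identifies $\chi(\fS_\beta)$ with the composition-indexed determinant $s_\beta$ via \eqref{skunk}. Your closing caveat about straightening matches the paper's own discussion (the relation $s_\zeta = -s_{\zeta\star t_r}$ and the example $s_{3,4,1}=0$).
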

 
We have the following relation amongst Schur functions: if $\zeta$ is
a sequence of integers of length $m$ and $\sigma$ a permutation in $S_m$, then 
\begin{equation}\label{eq:2ndrightaction}
    \zeta \star \sigma = \op\zeta_{\sigma_1} - \sigma_1 + 1,
    \zeta_{\sigma_2} - \sigma_2 + 2,
    \ldots, \zeta_{\sigma_m} - \sigma_m + m \cl.
\end{equation}
From Equations \eqref{sweatpotato} and \eqref{skunk} it follows that
$\chi(\fS_\alpha) = s_\alpha = (-1)^\sigma s_{\alpha \star \sigma}$.
The classical Littlewood-Richardson coefficients $c_{\mu, \lambda}^\nu$ in
Theorem~\ref{LRrule} are thus obtained from the immaculate
Littlewood-Richardson coefficients through the following corollary.
 
\begin{Corollary}\label{ImmLRgivesLR}  For partitions $\la, \mu, \nu$ such that $|\la| + |\mu| = |\nu|$,
$$c_{\mu, \lambda}^\nu = \sum_{\sigma\in S_{\ell(\nu)}}
(-1)^{\sigma} C_{\mu, \lambda}^{\nu\star \sigma} $$
with the convention that $C_{\mu, \lambda}^{\nu\star\sigma} =0$
if $\nu\star\sigma$ is not a composition of length $m=\ell(\nu)$ that contains $\mu$.
\end{Corollary}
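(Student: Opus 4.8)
The plan is to obtain the statement by applying the projection $\chi\colon\Nsym\to\Lambda$ to Theorem~\ref{ImmLRrule} and then straightening the resulting Schur functions indexed by compositions into the classical Schur basis. First I would record that $\chi(\fS_\beta)=s_\beta$ for \emph{every} composition (indeed every integer sequence) $\beta$, not merely for partitions: applying $\chi$ term-by-term to the Jacobi--Trudi expansion \eqref{skunk} turns $\sum_{\sigma}(-1)^\sigma H_{\beta_1+\sigma_1-1,\dots,\beta_m+\sigma_m-m}$ into $\sum_\sigma(-1)^\sigma\prod_i h_{\beta_i+\sigma_i-i}=\det|h_{\beta_i+j-i}|$, which is exactly $s_\beta$ in the sense of the Definition preceding this Corollary. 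Applying $\chi$ to Theorem~\ref{ImmLRrule} therefore yields Corollary~\ref{XImmLRrule}, namely $s_\mu s_\lambda=\sum_{\beta\models|\mu|+|\lambda|}C_{\mu,\lambda}^\beta s_\beta$, where now the $s_\beta$ are Schur functions indexed by honest compositions.

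Next I would extract, for a fixed partition $\nu$, the coefficient of $s_\nu$ on the right-hand side. Each $s_\beta$ either vanishes (when two of the shifted indices $\beta_i-i$ coincide, giving a repeated row in the determinant) or equals $\pm s_{\nu'}$ for the unique partition $\nu'$ obtained by sorting $(\beta_i-i)$ into strictly decreasing order and re-adding $i$. The key structural fact is that the relation $s_\zeta=-s_{\zeta\star t_r}$ is precisely the statement that $\star$ realizes the $S_m$-action permuting the shifted sequence $(\zeta_i-i)$ and re-adding $i$; since permuting the rows of the Jacobi--Trudi determinant introduces the sign of the permutation, this extends to $s_{\nu\star\sigma}=(-1)^\sigma s_\nu$ for every $\sigma\in S_{\ell(\nu)}$ (with $\nu$ padded by zeros to length $m=\ell(\nu)$). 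Consequently the compositions $\beta$ that straighten, up to sign, to $s_\nu$ are exactly those of the form $\nu\star\sigma$, and the contribution of such a $\beta$ to the coefficient of $s_\nu$ is $(-1)^\sigma C_{\mu,\lambda}^{\nu\star\sigma}$.

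The one point that requires genuine care is matching lengths, and this is where I expect the main obstacle to lie. The sum in Corollary~\ref{XImmLRrule} ranges over compositions $\beta$ of every length, whereas the asserted formula sums only over $S_{\ell(\nu)}$. A composition of length $p$ straightens to a partition with at most $p$ parts, so only $\beta$ with $p\ge m=\ell(\nu)$ can produce $s_\nu$. To rule out $p>m$, I would pad $\nu$ to length $p$: the shifted multiset $\{\nu_i-i\}$ then contains the value $-p$, yet any composition with positive parts satisfies $\beta_i-i\ge 1-i>-p$ for all $i\le p$, so $-p$ never occurs among the shifted indices of $\beta$, a contradiction. Hence only length-$m$ compositions contribute, and straightening sets up a bijection between them and the permutations $\sigma\in S_m$ for which $\nu\star\sigma$ is an honest composition.

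Finally I would assemble the pieces. For those $\sigma$ with $\nu\star\sigma$ a genuine composition of length $m$ containing $\mu$, the term $(-1)^\sigma C_{\mu,\lambda}^{\nu\star\sigma}$ is exactly the contribution computed above; for the remaining $\sigma$ the sequence $\nu\star\sigma$ either fails to be a composition (a nonpositive part, corresponding to a vanishing determinant or an index outside the sum) or fails to contain $\mu$, and in both cases the summand is zero --- in the first case by the stated convention, in the second because $C_{\mu,\lambda}^\beta$ counts immaculate tableaux of shape $\beta/\mu$ and so vanishes unless $\mu\subseteq\beta$. Summing over all of $S_{\ell(\nu)}$ thus recovers precisely the coefficient of $s_\nu$ in $s_\mu s_\lambda$, which by Theorem~\ref{LRrule} equals $c_{\mu,\lambda}^\nu$, giving $c_{\mu,\lambda}^\nu=\sum_{\sigma\in S_{\ell(\nu)}}(-1)^\sigma C_{\mu,\lambda}^{\nu\star\sigma}$.
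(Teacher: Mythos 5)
Your argument is correct and is exactly the route the paper intends: apply $\chi$ to Theorem~\ref{ImmLRrule} to get Corollary~\ref{XImmLRrule}, then straighten the composition-indexed Schur functions via $s_\zeta=-s_{\zeta\star t_r}$ and collect the coefficient of $s_\nu$. The paper leaves the details (in particular the length bookkeeping that restricts the sum to $S_{\ell(\nu)}$) unwritten, and your proposal supplies them correctly.
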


\begin{Remark}
We have constructed a sign reversing involution to resolve the cancelations in Corollary~\ref{ImmLRgivesLR}
and deduce the classical Littlewood-Richardson rule. We do not include it in this exposition because it is too long and it does not add clarity to the understanding of the rule.

It is however quite interesting that this expression,
combined with Remark \ref{grape}, 
gives an expression for the Littlewood-Richardson coefficients in terms of a
Pieri rule for the immaculate elements since
$$c_{\mu, \lambda}^\nu = \sum_{\sigma\in S_{\ell(\nu)}}
(-1)^{\sigma} C_{[1^{\ell(\mu)}], \lambda}^{\nu\star \sigma - \mu + [1^{\ell(\mu)}]}~.$$
\end{Remark}

\subsection{The details}
As mentioned above, we will define a sign reversing involution $\Phi$ on the set of tableaux $\Tal$
defined in \eqref{defnTal}. This subsection is devoted to constructing $\Phi$ and proving the various
properties of $\Phi$ which were claimed in the proof of Theorem \ref{ImmLRrule}.

In this section, we make use of various left and right actions of permutations
on lists, so for convenience and we collect the definitions of these actions
here.
\begin{itemize}
    \item
        We denote by $\sigma(r)$ the action of a permutation on a single value
        $r$.
    \item
        If $\sigma$ and $\tau$ are two permutations, then we denote by $\sigma
        \circ \tau$ the composition of these permutations:
        $$\left( \sigma \circ \tau \right)(r) = \sigma\left(\tau(r)\right).$$
    \item
        The left action of a permutation $\sigma \in S_n$ on a list of entries
        in $a_i \in \{1, \ldots, n\}$ is
        $$\sigma \cdot \op a_1, a_2, \ldots, a_d\cl = \op\sigma(a_1), \sigma(a_2), \ldots, \sigma(a_d)\cl.$$
    \item
        The right action of a permutation $\sigma \in S_n$ on a list of $n$
        objects is
        $$\op b_1, b_2, \ldots, b_n \cl \cdot \sigma = \op b_{\sigma(1)}, b_{\sigma(2)}, \ldots, b_{\sigma(n)}\cl.$$  This is different than the right action $\star \sigma$ defined in
        Equation \eqref{eq:2ndrightaction}.
\end{itemize}

\begin{Definition}
Given $T \in \Tal$, we construct a tableau $\Ys(T)$, which is a tableau (not
necessarily immaculate) possibly with some empty rows, as follows.
For every cell with entry $r$ in the $i$-th row of $T$
we put a cell with entry $i$ in the $\sigma(T)(r)$-th row of $\Ys(T)$.
We sort the entries of each row of $\Ys(T)$ in increasing order.
\end{Definition}

\begin{Example}\label{gopher}
Let $\lambda=\op2,2,2\cl$ and $\alpha=\op1,2\cl$. Let
$$ T_1= \tikztableausmall{{\boldentry X, 1,1,2},{\boldentry X,\boldentry X,2},{2,3}}.$$
Note that $\sigma(T_1)=c(T_1)-\lambda+Id = \op2,3,1\cl-\op2,2,2\cl+\op1,2,3\cl = \op1,3,2\cl$.
Since $\sigma(T_1)$ is a permutation, $T_1 \in \Tal$. By the construction,
$$\Ys(T_1)=\tikztableausmall{{1,1},{3},{1,2,3}}\,.$$

As a second example, again let $\lambda=\op2,2,2\cl$ and $\alpha=\op1,2\cl$. Let
$$ T_2= \tikztableausmall{{\boldentry X, 1,1},{\boldentry X,\boldentry X,2,2},{1,2}}.$$
Note that $\sigma(T_2)=c(T_2)-\lambda+Id = \op3,3,0\cl-\op2,2,2\cl+\op1,2,3\cl = \op2,3,1\cl$.
Again since $\sigma(T_2)$ is a permutation, $T_2 \in \Tal$. By the construction,
$$\Ys(T_2)=\tikztableausmall{{X},{1,1,3},{2,2,3}}\,,$$
where the shaded empty square indicates that there are no cells in the first row of the diagram.
\end{Example}
 
\begin{Lemma}
\label{Ylemma}
Let $\lambda$ be a partition and $\alpha$ a composition. For $T \in \Tal$, let $\tau = sh(Y(T))$.
\begin{enumerate}
\item\label{lemmaa0}
    $(\tau - Id) = (\lambda - Id) \cdot \sigma(T)^{-1}$.
\item\label{lemmaa} We can recover $T$ uniquely from $\Ys(T)$ (that is, $\Ys$ is injective).
\item\label{lemmab0}
    $\Des(\sigma(T)^{-1}) = \{r : \tau_r < \tau_{r+1} - 1\}$.
\item\label{lemmab1} If $\tau_r < \tau_{r+1}$, then $\tau_r < \tau_{r+1} - 1$.
\item\label{lemmab} $\tau$ is a partition if and only if $\sigma(T)=Id$.
\item\label{lemmac} If $\sigma(T) = Id$, then $\Ys(T)$ is a semi-standard Young tableau
    (of partition shape and strictly increasing down columns) if and only if $\read(T)$ is Yamanouchi.
\end{enumerate}
\end{Lemma}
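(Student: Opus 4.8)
The plan is to treat parts (1)--(5) as direct consequences of a single structural fact and to reserve the real combinatorial work for part (6). That fact is that, since $\lambda$ is a partition, the sequence $\lambda-Id$ (whose $j$-th entry is $\lambda_j-j$) is \emph{strictly decreasing}, and in particular has pairwise distinct entries. First I would record the basic bookkeeping identity. Writing $\sigma=\sigma(T)$, the defining relation $\sigma=c(T)-\lambda+Id$ says that the number of cells of content $r$ in $T$ is $c(T)_r=\lambda_r-r+\sigma(r)$. By construction, row $k$ of $\Ys(T)$ receives exactly one cell for each cell of $T$ of content $\sigma^{-1}(k)$, so $\tau_k=c(T)_{\sigma^{-1}(k)}=\lambda_{\sigma^{-1}(k)}-\sigma^{-1}(k)+k$, that is $(\tau-Id)_k=(\lambda-Id)_{\sigma^{-1}(k)}$. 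This is exactly \eqref{lemmaa0}.

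Parts \eqref{lemmab0}, \eqref{lemmab1}, \eqref{lemmab} then fall out at once. The condition $\tau_r<\tau_{r+1}-1$ is the same as $(\tau-Id)_r<(\tau-Id)_{r+1}$, which by \eqref{lemmaa0} and the strict decrease of $\lambda-Id$ holds iff $\sigma^{-1}(r)>\sigma^{-1}(r+1)$, i.e.\ iff $r\in\Des(\sigma^{-1})$, proving \eqref{lemmab0}; an equality $\tau_r=\tau_{r+1}-1$ would force $(\tau-Id)_r=(\tau-Id)_{r+1}$, impossible by distinctness, proving \eqref{lemmab1}; and $\tau$ is a partition iff $\tau-Id$ is strictly decreasing, which, since $\tau-Id$ is just $\lambda-Id$ rearranged by $\sigma^{-1}$ and $\lambda-Id$ is already strictly decreasing, happens iff $\sigma^{-1}=Id$, proving \eqref{lemmab}. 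For the injectivity in \eqref{lemmaa}, the distinctness of the entries of $\lambda-Id$ means that the identity in \eqref{lemmaa0} lets us read $\sigma^{-1}$ (hence $\sigma$) off from $\tau$ and $\lambda$; knowing $\sigma$, the construction reverses cell-by-cell (a content-$i$ cell in row $k$ of $\Ys(T)$ must come from a content-$\sigma^{-1}(k)$ cell in row $i$ of $T$), and re-sorting each row of $T$ recovers $T$ uniquely.

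The substantive step is \eqref{lemmac}, where $\sigma(T)=Id$, so by \eqref{lemmab} we have $\tau=\lambda$ and $c(T)=\lambda$. Here $\Ys(T)$ has partition shape $\lambda$ and weakly increasing rows by construction, so being semistandard is equivalent to its columns being strictly increasing. The bridge I would build is the identification that the entry $\Ys(T)_{r,c}$ equals the row of $T$ in which the $c$-th occurrence of the letter $r$ in $\read(T)$ sits: when $\sigma=Id$, row $r$ of $\Ys(T)$ is exactly the sorted list, with multiplicity, of the row indices of the cells of $T$ of content $r$, while in the reading word (rows top-to-bottom, right-to-left within a row) the occurrences of $r$ are met in increasing order of their row index. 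With this in hand, the column-strictness $\Ys(T)_{r,c}<\Ys(T)_{r+1,c}$ for all admissible $r,c$ (exactly $1\le c\le\lambda_{r+1}$) translates into: for every $r$ and $c$, the $c$-th $r$ is read strictly before the $c$-th $(r+1)$. Since the count inequality $\#_r\ge\#_{r+1}$ can only be threatened when an $(r+1)$ is read, this last condition is precisely the Yamanouchi property of $\read(T)$, giving \eqref{lemmac}.

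I expect \eqref{lemmac} to be the main obstacle, and within it the delicate point is the tie case where a single row $i$ of $T$ contains both $r$'s and $(r+1)$'s. There I would invoke that each row of $T$ is weakly increasing, so within that row all $(r+1)$'s are read (right-to-left) before all $r$'s; this is what confirms that an equality $\Ys(T)_{r,c}=\Ys(T)_{r+1,c}$ already forces a Yamanouchi violation, and hence that the correct equivalence is with \emph{strict} column increase rather than with merely weak increase.
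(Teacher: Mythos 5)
Your proof is correct and follows essentially the same route as the paper's: the bookkeeping identity $(\tau-Id)_k=(\lambda-Id)_{\sigma^{-1}(k)}$, the distinctness of the entries of $\lambda-Id$ driving parts (2)--(5), and the identification of $\Ys(T)_{r,c}$ with the row of $T$ containing the $c$-th occurrence of $r$ in $\read(T)$ for part (6). Your treatment of the tie case in (6) is in fact more explicit than the paper's, which leaves that verification to the reader.
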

 
\begin{proof}
    \eqref{lemmaa0}
    By definition $\sigma(T) = c(T) - \lambda + Id$ and so
    \begin{equation}\label{eq:shformula}
   \tau=c(T) \cdot \sigma(T)^{-1}
      = \lambda \cdot \sigma(T)^{-1}+Id-\sigma(T)^{-1}
      = (\lambda-Id) \cdot\sigma(T)^{-1}+Id.
   \end{equation}
 
\eqref{lemmaa} follows from the fact that one can reverse the construction of $\Ys$.
Indeed, the permutation $\sigma(T)$ can be computed from the identity
$(\tau - Id) = (\lambda - Id) \cdot \sigma(T)^{-1}$
since $\tau - Id$ and $\lambda - Id$ have distinct parts.
Then $T$ is the skew tableau of inner skew shape $\alpha$ whose $i$-th row
contains $\sigma(T)^{-1}(u_1)$, $\sigma(T)^{-1}(u_2)$, \dots,
where $u_1$, $u_2$, \dots, are the rows of $Y(T)$ that contain $i$ (listed
according to multiplicity).
 
\eqref{lemmab0}
From \eqref{eq:shformula}, if $d=\sigma(T)^{-1}(r)$ and
$c=\sigma(T)^{-1}(r+1)$, then
\begin{align*}
    \tau_r =\lambda_d-d+r
    \qquad\text{and}\qquad
    \tau_{r+1} =\lambda_c-c+(r+1).
\end{align*}
If $r$ is a descent of $\sigma(T)^{-1}$, then $d > c$ and so
$\lambda_d \leq \lambda_c$. This implies that
\begin{displaymath}
  \tau_r = \lambda_d-d+r <
  \lambda_c-c+r = \tau_{r+1} - 1.
\end{displaymath}
Conversely, if $r$ is an ascent of $\sigma(T)^{-1}$, then $d < c$ and so
$\lambda_d \geq \lambda_c$. This implies that
\begin{displaymath}
  \tau_r = \lambda_d-d+r >
  \lambda_c-c+r = \tau_{r+1} - 1,
\end{displaymath}
and so $\tau_r \geq \tau_{r+1}$.
 
\eqref{lemmab1} If $\tau_r < \tau_{r+1}$, then by the previous sentence, $r$
cannot be an ascent of $\sigma(T)^{-1}$.
Hence, $r$ is a descent of $\sigma(T)^{-1}$, which by \eqref{lemmab0}
implies that $\tau_r < \tau_{r+1} - 1$.
 
\eqref{lemmab} follows from \eqref{lemmab0} by remarking that the
identity permutation is the only permutation with no descents.
 
To prove \eqref{lemmac}, note that in $\Ys(T)$, the $(i,j)$ entry is a $k$ if in $T$, the $j$-th appearance of the letter
$i$ in $\read(T)$ comes from a cell in row $k$. The result follows.
\end{proof}
  
We now define a mapping on $Y(\Tal)$, which we will prove
is an involution.
We say that a cell $x$ not in the first row of a tableau and containing the
value $a$ is \emph{nefarious} if the cell above $x$ is either empty or it
contains $b$ with $b \geq a$:
 $$ \tikztableausmall{{ X},{$a$}} \qquad \text{ or }\qquad \tikztableausmall{{$b$},{$a$}}.$$
The \emph{most nefarious cell} of a tableau is the bottom-most nefarious cell
contained in the left-most column of the tableau that contains a nefarious cell.
 
\begin{Definition}\label{def:Theta}
For a partition $\lambda$ and a composition $\alpha$, we define a map $\Theta$ on $\Ys(\Tal)$ as follows
(\textit{cf.} Figure \ref{fig:Theta2}).
Let $Y(T) \in Y(\Tal)$.
 
\begin{enumerate}
\item\label{Theta1}
If $Y(T)$ contains no nefarious cells
(equivalently, if $Y(T)$ is semistandard
and of partition shape),
then $Y(T)$ is a fixed point of $\Theta$.
 
\item\label{Theta2}
Otherwise, let $x$ be the most nefarious cell of $Y(T)$.
 
\begin{enumerate}
  \item\label{Theta2a}
      If the cell $y$ above $x$ is not empty, then define $\Theta(Y(T))$ to be the tableau obtained from $Y(T)$ by moving:
      \begin{itemize}
          \item
              all the cells strictly to the right of $x$ into the row above $x$; and
          \item
              all the cells weakly to the right of $y$ (including $y$) into the row containing $x$.
      \end{itemize}
 
  \item\label{Theta2b}
  Otherwise, define $\Theta(Y(T))$ to be the tableau obtained from $Y(T)$ by moving all the cells strictly to the right
  of $x$ into the row above $x$.
\end{enumerate}
\end{enumerate}
\end{Definition}
 
\begin{figure}[h!]
\begin{center}
\begin{tikzpicture}[scale=0.5, every node/.style={anchor=south}]
  \coordinate (o) at (0, 0);
  \node[anchor=south west] at ($(o) +(-6,0)$) {\small row $r+1$};
  \node[anchor=south west] at ($(o) +(-6,1)$) {\small row $r$};
  \draw[color=black] (o) rectangle +(-2, 1);
  \node at ($(o) +(-1,0)$) {$\cdots$};
  \draw[color=black] ($(o) +(0,1)$) rectangle +(-2, 1);
  \node at ($(o) +(-1,1)$) {$\cdots$};
  \draw[color=black] (o) rectangle +(1, 1);
  \node at ($(o) +(0.5,0)$) {$x$};
  \draw[color=black] ($(o) + (0, 1)$) rectangle +(1, 1);
  \node[color=RoyalBlue] at ($(o) +(0.5,1)$) {$y$};
  \draw[color=black] ($(o) + (1, 0)$) rectangle +(8, 1);
  \node[color=ForestGreen] at ($(o) +(5,0)$) {$u$};
  \draw[color=black] ($(o) + (1, 1)$) rectangle +(5, 1);
  \node[color=BrickRed] at ($(o) +(3.5,1)$) {$v$};
  \node at ($(o) +(11,0.5)$) {${\buildrel \Theta\over\longmapsto}$};
  \coordinate (o) at (15, 0);
  \draw[color=black] (o) rectangle +(-2, 1);
  \node at ($(o) +(-1,0)$) {$\cdots$};
  \draw[color=black] ($(o) +(0,1)$) rectangle +(-2, 1);
  \node at ($(o) +(-1,1)$) {$\cdots$};
  \draw[color=black] (o) rectangle +(1, 1);
  \node at ($(o) +(0.5,0)$) {$x$};
  \draw[color=black] ($(o) + (1, 0)$) rectangle +(1, 1);
  \node[color=RoyalBlue] at ($(o) +(1.5,0)$) {$y$};
  \draw[color=black] ($(o) + (0, 1)$) rectangle +(8, 1);
  \node[color=ForestGreen] at ($(o) +(4,1)$) {$u$};
  \draw[color=black] ($(o) + (2, 0)$) rectangle +(5, 1);
  \node[color=BrickRed] at ($(o) +(4.5,0)$) {$v$};
\end{tikzpicture}
 
\medskip
\medskip
 
\begin{tikzpicture}[scale=0.5, every node/.style={anchor=south}]
  \coordinate (o) at (0, 0);
  \node[anchor=south west] at ($(o) +(-6,0)$) {\small row $r+1$};
  \node[anchor=south west] at ($(o) +(-6,1)$) {\small row $r$};
  \draw[color=black] (o) rectangle +(-2, 1);
  \node at ($(o) +(-1,0)$) {$\cdots$};
  \draw[color=black] ($(o) +(0,1)$) rectangle +(-2, 1);
  \node at ($(o) +(-1,1)$) {$\cdots$};
  \draw[color=black] (o) rectangle +(1, 1);
  \node at ($(o) +(0.5,0)$) {$x$};
  \draw[color=black] ($(o) + (1, 0)$) rectangle +(8, 1);
  \node[color=ForestGreen] at ($(o) +(5,0)$) {$u$};
  \node at ($(o) +(11,0.5)$) {${\buildrel \Theta\over\longmapsto}$};
  \coordinate (o) at (15, 0);
  \draw[color=black] (o) rectangle +(-2, 1);
  \node at ($(o) +(-1,0)$) {$\cdots$};
  \draw[color=black] ($(o) +(0,1)$) rectangle +(-2, 1);
  \node at ($(o) +(-1,1)$) {$\cdots$};
  \draw[color=black] (o) rectangle +(1, 1);
  \node at ($(o) +(0.5,0)$) {$x$};
  \draw[color=black] ($(o) + (0, 1)$) rectangle +(8, 1);
  \node[color=ForestGreen] at ($(o) +(4,1)$) {$u$};
\end{tikzpicture}
\end{center}
\caption{The effect of $\Theta$ on the cells to the right
  of the two types of a nefarious cell $x$.}
\label{fig:Theta2}
\end{figure}
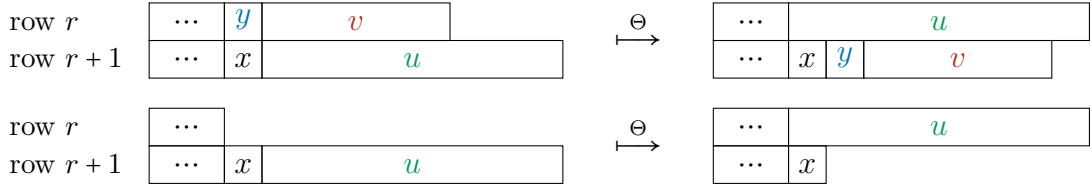
 
\begin{Example}\label{weasel}
Continuing Example \ref{gopher}, the most nefarious cell of $\Ys(T_1)$ is in the third row of the first column. Applying $\Theta$ yields:
$$\Theta\left(\tikztableausmall{{1,1},{\textcolor{RoyalBlue}3},{\textbf1,\textcolor{ForestGreen}2,\textcolor{ForestGreen}3}}\right) = \tikztableausmall{{1,1},{\textcolor{ForestGreen}2,\textcolor{ForestGreen}3},{\textbf1,\textcolor{RoyalBlue}3}}\,.$$
In $\Ys(T_2)$, the most nefarious cell is in
the second row of the first column and so
$$\Theta\left(  \tikztableausmall{{X},{\textbf1,\textcolor{ForestGreen}1,\textcolor{ForestGreen}3},{2,2,3}}\right) =   \tikztableausmall{{\textcolor{ForestGreen}1,\textcolor{ForestGreen}3},{\textbf1},{2,2,3}}\,.$$
\end{Example}
 
\begin{Lemma}
  If $Y(T) \in Y(\Tal)$ contains a nefarious cell, then
  $\Theta(Y(T)) \neq Y(T)$.
\end{Lemma}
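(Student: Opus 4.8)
The plan is to prove the stronger statement that $\Theta$ changes the \emph{shape} of $Y(T)$, so that $\Theta(Y(T))$ and $Y(T)$ differ already as diagrams. Let $x$ be the most nefarious cell and suppose it sits in row $r+1$ and column $c$; let $y$ be the cell directly above it. Since $\Theta$ only relocates cells of rows $r$ and $r+1$ lying in columns $\ge c$, it suffices to track the two row lengths $\tau_r$ and $\tau_{r+1}$, where $\tau = sh(Y(T))$. The one external ingredient I would invoke is Lemma~\ref{Ylemma}\eqref{lemmab1}: whenever $\tau_r<\tau_{r+1}$ one in fact has $\tau_r<\tau_{r+1}-1$.

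In the case \eqref{Theta2a} where $y$ is non-empty (so $\tau_r\ge c$), a direct count shows $\Theta$ lifts the $\tau_{r+1}-c$ cells strictly right of $x$ into row $r$ and drops the $\tau_r-c+1$ cells weakly right of $y$ into row $r+1$, producing new row lengths $\tau_{r+1}-1$ and $\tau_r+1$. These equal the old lengths exactly when $\tau_{r+1}=\tau_r+1$; but that would give $\tau_r<\tau_{r+1}$, whence Lemma~\ref{Ylemma}\eqref{lemmab1} yields the absurdity $\tau_r<\tau_{r+1}-1=\tau_r$. So the shape must change.

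In the case \eqref{Theta2b} where $y$ is empty we have $\tau_r<c\le\tau_{r+1}$, and $\Theta$ merely lifts the cells strictly right of $x$ into row $r$; the shape is therefore unchanged precisely when no such cell exists, i.e.\ when $\tau_{r+1}=c$. I would rule this out using the minimality built into the most nefarious cell: if $\tau_{r+1}=c$, then $\tau_r<\tau_{r+1}$ and Lemma~\ref{Ylemma}\eqref{lemmab1} forces $\tau_r\le c-2$, so $c\ge2$ and the cell $(r,c-1)$ is empty. Then $(r+1,c-1)$ is occupied with an empty cell above it, hence is a nefarious cell in column $c-1<c$, contradicting that $x$ lies in the left-most column containing a nefarious cell. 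Thus $\tau_{r+1}>c$, a cell is genuinely lifted, and the length of row $r+1$ drops.

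In both cases $sh(\Theta(Y(T)))\neq sh(Y(T))$, so $\Theta(Y(T))\neq Y(T)$. The appeal of this route is that it never compares entries: the shape-change argument, powered by Lemma~\ref{Ylemma}\eqref{lemmab1} together with the left-most-column convention in the definition of the most nefarious cell, settles both cases. I expect the case \eqref{Theta2b} minimality step to be the only real subtlety, since it is precisely what guarantees a cell to the right of $x$.
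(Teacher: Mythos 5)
Your proof is correct and rests on the same two pillars as the paper's own argument: $\Theta$ can fail to move the tableau only if the cell counts in rows $r$ and $r+1$ balance, and Lemma~\ref{Ylemma}\eqref{lemmab1} rules out the balanced configuration $\tau_{r+1}=\tau_r+1$. The differences are one of organization and one substantive point. Organizationally, you split according to whether the cell $y$ above $x$ is present (cases \eqref{Theta2a} and \eqref{Theta2b} of Definition~\ref{def:Theta}), whereas the paper splits according to whether $\tau_r\ge\tau_{r+1}$ or $\tau_r<\tau_{r+1}$; these are interchangeable. Substantively, your treatment of case \eqref{Theta2b} is more complete than the paper's. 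There the paper simply asserts that more cells move from row $r+1$ into row $r$ than in the other direction, but when $x$ is the last cell of its row (i.e.\ $\tau_{r+1}=c$) zero cells move in either direction, and Lemma~\ref{Ylemma}\eqref{lemmab1} alone only yields $\tau_r<\tau_{r+1}-1$, not the needed inequality $\tau_{r+1}>c$. Your observation --- that $\tau_{r+1}=c$ would force $(r,c-1)$ to be empty and hence $(r+1,c-1)$ to be nefarious in a strictly earlier column, contradicting the left-most-column minimality in the definition of the most nefarious cell --- is exactly the missing justification, and it is the only place in this lemma where the specific choice of the \emph{most} nefarious cell, rather than an arbitrary nefarious cell, actually gets used.
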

\begin{proof}
  Suppose the most nefarious cell $x$ of $Y(T)$ lies in row $r+1$.
  Let $\tau = sh(Y(T))$.
 
  If $\tau_r \geq \tau_{r+1}$, then the cell $y$ above $x$ is not empty
  and so we find ourselves in case \eqref{Theta2a} of Definition \ref{def:Theta}.
  Hence, the number of cells that move from row $r$ into row $r+1$
  is greater than the number of cells that move from row $r+1$ into row $r$.
  In particular, $\Theta(Y(T))\ne Y(T)$.
 
  Suppose instead that $\tau_r < \tau_{r+1}$. Then we have
  that $\tau_r < \tau_{r+1} - 1$ by \ref{Ylemma}\eqref{lemmab1}.
  Hence, the number of cells that move from row $r+1$ into row $r$
  is greater than the number of cells that move from row $r$ into row $r+1$.
  This insures that $\Theta(Y(T))\ne Y(T)$ in this case as well.
\end{proof}
 
\begin{Lemma}\label{permutationforT'} For a composition $\alpha$, a partition $\lambda$ and $T\in \Tal$,
if there exists a $T' \in \Tal$ for which $Y(T') = \Theta(Y(T))$ and $T' \neq T$, then
there exists an $1 \leq r < \ell(outsh(T))$ such that $\sigma(T') = t_r \circ \sigma(T)$.
\end{Lemma}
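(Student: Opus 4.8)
The plan is to track the effect of $\Theta$ on the \emph{shape} $\tau := sh(\Ys(T))$, and then translate the resulting shape identity back into a statement about permutations using the dictionary supplied by Lemma~\ref{Ylemma}. Since $T' \neq T$, the tableau $\Ys(T)$ must contain a nefarious cell (otherwise it is a fixed point of $\Theta$ and, by injectivity of $\Ys$ in Lemma~\ref{Ylemma}\eqref{lemmaa}, we would get $T' = T$). So let $x$ be the most nefarious cell of $\Ys(T)$, lying in row $r+1$ and column $c$; as $x$ is not in the first row, $r \geq 1$. Because $\Theta$ only shuffles cells between rows $r$ and $r+1$, we have $\tau'_i = \tau_i$ for all $i \notin \{r,r+1\}$, where $\tau' := sh(\Theta(\Ys(T))) = sh(\Ys(T'))$.

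The first real step is to establish that, in both cases of Definition~\ref{def:Theta},
\[
\tau'_r = \tau_{r+1} - 1 \qquad\text{and}\qquad \tau'_{r+1} = \tau_r + 1.
\]
In case~\eqref{Theta2a} (the cell $y$ above $x$ is nonempty) this is a direct count from Figure~\ref{fig:Theta2}: the $\tau_{r+1}-c$ cells strictly right of $x$ rise into row $r$, leaving it with $(c-1)+(\tau_{r+1}-c) = \tau_{r+1}-1$ cells, while the $\tau_r-c+1$ cells weakly right of $y$ drop into row $r+1$, giving it $(c-1)+1+(\tau_r-c+1) = \tau_r+1$ cells. The hard part is case~\eqref{Theta2b} (the cell above $x$ is empty), where one must first argue that necessarily $\tau_r = c-1$: if instead $\tau_r < c-1$, then the cell of $\Ys(T)$ in row $r+1$, column $c-1$ would also have an empty cell above it and hence be nefarious, contradicting that $x$ lies in the \emph{left-most} column containing a nefarious cell. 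With $\tau_r = c-1$ secured, the same bookkeeping gives $\tau'_r = (c-1)+(\tau_{r+1}-c) = \tau_{r+1}-1$ and $\tau'_{r+1} = (c-1)+1 = \tau_r+1$. Extracting $\tau_r = c-1$ from the minimality of the nefarious column is the subtle point I expect to require the most care.

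Finally I would convert this shape identity into the permutation identity. Applying Lemma~\ref{Ylemma}\eqref{lemmaa0} to both $T$ and $T'$ gives
\[
(\tau - Id) = (\lambda - Id)\cdot\sigma(T)^{-1} \qquad\text{and}\qquad (\tau' - Id) = (\lambda - Id)\cdot\sigma(T')^{-1}.
\]
Put $\rho = t_r \circ \sigma(T)$, so that $\rho^{-1} = \sigma(T)^{-1} \circ t_r$. Using the two identities $\tau'_r - r = \tau_{r+1} - (r+1)$ and $\tau'_{r+1} - (r+1) = \tau_r - r$ (immediate from the shape computation), together with $\tau'_i = \tau_i$ for the remaining indices, one checks entrywise that $(\tau' - Id) = (\lambda - Id)\cdot \rho^{-1}$ as well. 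Since $\lambda$ is a partition, the entries of $\lambda - Id$ are strictly decreasing and in particular distinct; hence the permutation $\pi$ solving $(\tau' - Id) = (\lambda - Id)\cdot \pi^{-1}$ is determined by $\tau'$ alone — this is precisely the uniqueness invoked in Lemma~\ref{Ylemma}\eqref{lemmaa}. Comparing the two descriptions of this permutation forces $\sigma(T')^{-1} = \rho^{-1}$, that is, $\sigma(T') = t_r \circ \sigma(T)$. The bound $1 \le r < \ell(sh(T))$ then follows routinely: $r \ge 1$ because $x$ is not in the first row, and rows $r$ and $r+1$ are both genuine rows of $\Ys(T)$, whose number of rows agrees with that of $sh(T)$.
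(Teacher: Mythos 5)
Your proof is correct and follows essentially the same route as the paper: establish that $\Theta$ sends the shape $\tau$ to the vector with $\tau_{r+1}-1$ and $\tau_r+1$ in positions $r$ and $r+1$ (the paper writes this as $sh(\Theta(\Ys(T)))=(sh(\Ys(T))+t_r-Id)\cdot t_r$), then apply Lemma~\ref{Ylemma}\eqref{lemmaa0} to both $T$ and $T'$ and use the distinctness of the parts of $\lambda-Id$ to force $\sigma(T')^{-1}=\sigma(T)^{-1}\circ t_r$. Your extra care in case~\eqref{Theta2b}, deducing $\tau_r=c-1$ from the left-most-column minimality of the nefarious cell, is a detail the paper leaves implicit, and is handled correctly.
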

 
\begin{proof}
   Suppose that $\Theta(Y(T)) = Y(T')$ and that $T' \neq T$. Then $Y(T)$ is
   not a fixed point of $\Theta$ and so it contains a nefarious cell. Suppose
   the most nefarious cell $x$ of $Y(T)$ lies in row $r+1$.
   Then the shapes of $\Theta(Y(T))$ and $Y(T)$ differ only in rows $r$ and
   $r+1$. More explicitly,
   \begin{gather*}
       outsh(\Theta(\Ys(T))) = \big(outsh({\Ys(T)}) + t_r - Id\big) \cdot t_r
   \end{gather*}
   since $t_r - Id=(0,\ldots,0,1,- 1,0,\ldots 0)$ is the vector whose nonzero
   entries occur in positions $r$ and $r+1$.
   Using Equation \eqref{eq:shformula}, we rewrite $outsh(Y(T))$ to obtain
   \begin{align*}
       outsh(\Theta(\Ys(T)))
       &= \big(\lambda \cdot \sigma(T)^{-1} + t_r -\sigma(T)^{-1}\big) \cdot t_r  \\
       &= \lambda \cdot \left(\sigma(T)^{-1} \circ t_r\right)
          + Id - \sigma(T)^{-1} \circ t_r.
   \end{align*}
   But, also from Equation \eqref{eq:shformula}, we have
       $outsh(Y(T')) =\big(\lambda - Id\big) \cdot \sigma(T')^{-1} + Id$
   so that
   \begin{align*}
       \big(\lambda - Id\big) \cdot \sigma(T')^{-1}
       & = outsh(Y(T')) - Id \\
       & = \left(
               \lambda \cdot \left(\sigma(T)^{-1} \circ t_r\right)
               + Id - \sigma(T)^{-1} \circ t_r
           \right)
       - Id \\
       & = \big( \lambda - Id \big) \cdot \left(\sigma(T)^{-1} \circ t_r\right)
   \end{align*}
   Since all the parts of $\lambda - Id$ are distinct, we conclude that
   $\sigma(T')^{-1} = \sigma(T)^{-1} \circ t_r$.
\end{proof}
 
\begin{Lemma} \label{lemma:Yinversexists}
For every immaculate tableau $T \in \Tal$, there exists an immaculate tableau
$T' \in \Tal$ such that $Y( T' ) = \Theta(\Ys(T))$.
\end{Lemma}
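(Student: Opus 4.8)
The plan is to construct $T'$ explicitly by applying the inverse of the construction of $\Ys$ (Lemma~\ref{Ylemma}\eqref{lemmaa}) to $Y'' := \Theta(\Ys(T))$, and then to check that the skew tableau so obtained is immaculate; since this construction is reverse to $\Ys$, the identity $\Ys(T') = Y''$ will hold once $T'$ is known to be immaculate (only then is $\Ys$ even applicable to $T'$). If $\Ys(T)$ has no nefarious cell, then $\Theta$ fixes it and $T' = T$ works, so I assume $\Ys(T)$ has a most nefarious cell $x$, lying in row $r+1$. First I would check that the inverse construction applies to $Y''$: by the shape computation carried out in the proof of Lemma~\ref{permutationforT'}, $sh(Y'') - Id = (\lambda - Id)\cdot(\sigma(T)^{-1}\circ t_r)$, which is a rearrangement of $\lambda - Id$ and hence has distinct parts. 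Consequently there is a unique permutation, namely $\sigma_{Y''} = t_r\circ\sigma(T)$, with $(sh(Y'') - Id) = (\lambda - Id)\cdot\sigma_{Y''}^{-1}$, and I define $T'$ to be the skew tableau of inner shape $\alpha$ whose $i$-th row lists in weakly increasing order the values $\sigma_{Y''}^{-1}(v)$, as $v$ ranges with multiplicity over the rows of $Y''$ containing a cell of content $i$.

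Next I would verify that $T'$ is immaculate. Its rows are weakly increasing by construction. Since $\Theta$ only relocates cells between rows $r$ and $r+1$ without altering their contents, the content of $Y''$ equals that of $\Ys(T)$; hence the number of cells of content $i$ is unchanged for every $i$, so $sh(T') = sh(T)$ and in particular the outer shape of $T'$ is a composition containing $\alpha$. Thus the only remaining requirement is that the first column of $T'$ be strictly increasing, i.e. that $\first_i(Y'') := \min\{\sigma_{Y''}^{-1}(v) : \text{content } i \text{ occurs in row } v \text{ of } Y''\}$ be strictly increasing in $i$.

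I expect this last inequality to be the main obstacle, and it is precisely here that the choice of $x$ as the \emph{most} nefarious cell (bottom-most in the left-most column containing a nefarious cell) is needed. Writing $a := \sigma(T)^{-1}(r)$ and $b := \sigma(T)^{-1}(r+1)$, the permutation $\sigma_{Y''}^{-1}$ agrees with $\sigma(T)^{-1}$ except that it interchanges the images of $r$ and $r+1$; moreover $\Theta$ alters the set of rows in which a given content $i$ appears only through the cells that migrate between rows $r$ and $r+1$, namely those lying strictly to the right of the column of $x$. I would treat the two cases \eqref{Theta2a} and \eqref{Theta2b} of Definition~\ref{def:Theta} separately, in each case comparing $\first_i(Y'')$ with $\first_i(\Ys(T))$ for the affected rows and using that $x$ is the bottom-most nefarious cell in the left-most nefarious column to rule out any newly created first-column descent in $T'$. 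The careful accounting of which cells move and how the minima $\first_i$ are thereby shifted is the technical core of the argument; once this is in place, strict increase of the first column of $T'$ follows, $T'$ is immaculate, and the proof is complete.
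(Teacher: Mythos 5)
Your setup coincides with the paper's: both apply the inversion procedure from Lemma~\ref{Ylemma}\eqref{lemmaa} to $\Theta(\Ys(T))$, note that the resulting $T'$ has weakly increasing rows by construction, and reduce everything to the strict increase of the first column. But at exactly that point your proposal stops being a proof: you write that the careful accounting of which cells move and how the minima $\first_i$ are shifted ``is the technical core of the argument; once this is in place \dots the proof is complete.'' That accounting \emph{is} the lemma --- everything before it is routine bookkeeping --- and it is not supplied. As written, this is a correct plan with the decisive step left as an assertion.

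For comparison, here is how the paper closes that gap. It proves the stronger statement that the first column of $T'$ \emph{equals} the first column of $T$. Since the passage from $T$ to $T'$ only exchanges some occurrences of $\sigma(T)^{-1}(r)$ and $\sigma(T)^{-1}(r+1)$ (your observation about $\sigma_{Y''}^{-1}$), if the first entries $f_j(T)$ and $f_j(T')$ of some row $j>\ell(\alpha)$ differ, they must be these two values. The paper then splits into the two possibilities for which of them equals $f_j(T)$ and derives, in each case, both $\sigma(T)^{-1}(r)>\sigma(T)^{-1}(r+1)$ and $\sigma(T)^{-1}(r)<\sigma(T)^{-1}(r+1)$, a contradiction. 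The key structural input is that, because $f_j(T)$ sits in the first column of $T$, the letter $j$ is the \emph{maximal} entry of row $\sigma(T)(f_j(T))$ of $\Ys(T)$ (the value $f_j(T)$ cannot reappear in any lower row of $T$); combining this with the explicit description of which cells $\Theta$ moves (those strictly to the right of the most nefarious cell $x$ and, in case \eqref{Theta2a}, those weakly to the right of the cell above $x$) pins down in which of rows $r$, $r+1$ the letter $j$ lands, and forces the contradictory inequalities. To make your argument stand on its own you need to carry out this (or an equivalent) case analysis explicitly rather than gesture at it.
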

\begin{proof}
The proof of Lemma \ref{Ylemma}\eqref{lemmaa} gives a procedure to compute the
preimage of $\Theta(Y(T))$ under the map $Y$.
We will show that the resulting skew tableau $T'$ belongs to $\Tal$.
To simplify the notation, let $\sigma = \sigma(T)$.
 
Suppose that the most nefarious cell $x$ of $Y(T)$ occurs in row $r+1$.
Let $T'$ denote the skew tableau of inner skew shape $\alpha$
whose $j$-th row contains
$(t_r \circ \sigma)^{-1}(v_1)$,
\dots,
$(t_r \circ \sigma)^{-1}(v_a)$
(listed in weakly increasing order),
where $v_1, \dots, v_a$ are the rows of $\Theta(Y(T))$ that contain $j$,
with each row listed as often as $j$ appears in it.
For examples of this, see Examples \ref{duckfish} and \ref{goat}.
By construction, the entries in each row of $T'$ are weakly increasing
and $T'$ has the same shape as $T$, so it
remains to show that the entries in the first column of $T'$ are strictly
increasing. We will show that the first column of $T'$ is equal to the first
column of $T$.

We begin by making a few observations.
\begin{description}
    \item[Claim 1]
        \emph{The tableau $T'$ is obtained from $T$ by changing some
            occurrences of $\sigma^{-1}(r)$ to $\sigma^{-1}(r+1)$ and some
            occurrences of $\sigma^{-1}(r+1)$ to $\sigma^{-1}(r)$.}

        \smallskip
        Let $e_1, \dots, e_l$ be the entries in the $i$-th row of $T$.
        Then $i$ appears in rows
            $\sigma(e_1), \dots, \sigma(e_l)$
        of $Y(T)$, with each row listed as often as $i$ appears in it.

        Since $\Theta$ moves cells from $r$ to row $r+1$ and cells from row
        $r+1$ to row $r$,
        the list of rows of $\Theta(Y(T))$ that contain $i$
        is obtained from the list
        $\sigma(e_1), \dots, \sigma(e_l)$
        by changing some occurrences of $r$ to $r + 1$
        and by changing some occurrences of $r + 1$ to $r$.
        Denote this transformed list by
        $\theta_1, \dots, \theta_l$,
        where
        \begin{align*}
            \theta_k &= \sigma(e_k)
                \text{~if~} \sigma(e_k) \notin \left\{ r, r+1 \right\}
                \text{; and} \\
            \theta_k &\in \left\{ r, r + 1 \right\}
                \text{~if~} \sigma(e_k) \in \left\{ r, r+1 \right\}.
        \end{align*}
        Note that $\theta_k \in \left\{r, r+1\right\}$
        iff $\sigma(e_k) \in \left\{r, r+1\right\}$.

        Row $i$ of $T'$ is obtained by applying $(t_r \circ \sigma)^{-1}$
        to the list $\theta_1, \dots, \theta_l$.
        If $e_k$ is neither $\sigma^{-1}(r)$ nor $\sigma^{-1}(r+1)$,
        then $t_r$ fixes $\theta_r$ since $\theta_k \notin \left\{ r, r + 1 \right\}$;
        hence $(t_r \circ \sigma)^{-1}(\theta_k) = \sigma^{-1}(\theta_k) = e_k$.
        Consequently, the entry $e_k$ is unchanged if $e_k$
        is neither $\sigma^{-1}(r)$ nor $\sigma^{-1}(r+1)$.
        On the other hand,
        if $e_k \in \left\{ \sigma^{-1}(r), \sigma^{-1}(r + 1) \right\}$,
        then $t_r(\theta_k) \in \left\{r,r+1\right\}$ since
        $\theta_k \in \left\{r, r+1\right\}$;
        hence $(t_r \circ \sigma)^{-1}(\theta_k)
        \in \left\{ \sigma^{-1}(r), \sigma^{-1}(r + 1) \right\}$.

        \medskip

    \item[Claim 2]
        \emph{If $j > \ell(\alpha)$, then the first entry in the $j$-th row of $T$ is}
        \begin{gather*}
           f_j(T) = \min\left\{ \sigma^{-1}(s) :
                     \text{row $s$ of $Y(T)$ contains $j$} \right\};
        \end{gather*}
        \emph{and the first entry in the $j$-th row of $T'$ is}
        \begin{gather*}
           f_j(T') = \min\left\{ \left(t_r \circ \sigma\right)^{-1}(s) :
                     \text{row $s$ of $\Theta(Y(T))$ contains $j$} \right\}.
        \end{gather*}

        \medskip

    \item[Claim 3]
        \emph{The maximal entry in row $\sigma(f_j(T))$ of $Y(T)$ is $j$.}

        \smallskip
        Since $T$ is a skew immaculate tableau and $f_j(T)$ is the first entry in row
        $j$ of $T$, it follows that $f_j(T)$ does not appear in rows $j+1, j+2, \dots$
        of $T$.
        Hence, none of $j+1, j+2, \dots$ appear in row $\sigma(f_j(T))$ of $Y(T)$.
\end{description}

\medskip

Now suppose that the first column of $T$ is not equal to the first column of $T'$
and let $j > \ell(\alpha)$ be a row in which the first entries of $T$ and $T'$
differ: $f_j(T) \neq f_j(T')$. By Claim 1, these entries are necessarily
$\sigma^{-1}(r)$ and $\sigma^{-1}(r+1)$.
 
Below we will make reference to the following diagram
illustrating the effect of $\Theta$ on the rows $r$ and $r+1$ of $Y(T)$;
note that it is possible that the cells containing $c$, $d$ and $b$ do not
exist.
\begin{center}
\begin{tikzpicture}[scale=0.5]
   \coordinate (o) at (0, 0);
   \node[anchor=west] at ($(o) +(-7,0.5)$) {\footnotesize row $r+1$};
   \node[anchor=west] at ($(o) +(-7,1.5)$) {\footnotesize row $r$};
   \draw[color=black] ($(o) +(-1,0)$) rectangle +(-2, 1);
   \node[anchor=south] at ($(o) +(-2,0)$) {$\cdots$};
   \draw[color=black] ($(o) +(-1,1)$) rectangle +(-2, 1);
   \node[anchor=south] at ($(o) +(-2,1)$) {$\cdots$};
   \draw[color=black] ($(o) + (0,1)$) rectangle +(-1, 1);
   \node[anchor=south] at ($(o) +(-0.5,1)$) {$c$};
   \draw[color=black] (o) rectangle +(-1, 1);
   \node[anchor=south] at ($(o) +(-0.5,0)$) {$d$};
   \draw[color=black] (o) rectangle +(1, 1);
   \node[anchor=south] at ($(o) +(0.5,0)$) {$a$};
   \draw[color=black] ($(o) + (0, 1)$) rectangle +(1, 1);
   \node[color=RoyalBlue,anchor=south] at ($(o) +(0.5,1)$) {$b$};
   \draw[color=black] ($(o) + (1, 0)$) rectangle +(5, 1);
   \node[color=ForestGreen,anchor=south] at ($(o) +(3.5,0)$) {$u$};
   \draw[color=black] ($(o) + (1, 1)$) rectangle +(6, 1);
   \node[color=BrickRed,anchor=south] at ($(o) +(4,1)$) {$v$};
   \node at ($(o) +(8.5,1)$) {${\buildrel \Theta\over\longmapsto}$};
   \coordinate (o) at (13, 0);
   \draw[color=black] ($(o) +(-1,0)$) rectangle +(-2, 1);
   \node[anchor=south] at ($(o) +(-2,0)$) {$\cdots$};
   \draw[color=black] ($(o) +(-1,1)$) rectangle +(-2, 1);
   \node[anchor=south] at ($(o) +(-2,1)$) {$\cdots$};
   \draw[color=black] ($(o) + (0,1)$) rectangle +(-1, 1);
   \node[anchor=south] at ($(o) +(-0.5,1)$) {$c$};
   \draw[color=black] (o) rectangle +(-1, 1);
   \node[anchor=south] at ($(o) +(-0.5,0)$) {$d$};
   \draw[color=black] (o) rectangle +(1, 1);
   \node at ($(o) +(0.5,0.5)$) {$a$};
   \draw[color=black] ($(o) + (1, 0)$) rectangle +(1, 1);
   \node[color=RoyalBlue,anchor=south] at ($(o) +(1.5,0)$) {$b$};
   \draw[color=black] ($(o) + (0, 1)$) rectangle +(5, 1);
   \node[color=ForestGreen,anchor=south] at ($(o) +(2.5,1)$) {$u$};
   \draw[color=black] ($(o) + (2, 0)$) rectangle +(6, 1);
   \node[color=BrickRed,anchor=south] at ($(o) +(5,0)$) {$v$};
\end{tikzpicture}
\end{center}

\textbf{Case 1.}
\emph{Suppose that $f_j(T) = \sigma^{-1}(r)$
      and $f_j(T') = \sigma^{-1}(r+1) = (t_r \circ \sigma)^{-1}(r)$.}
We derive a contradiction by showing
    $\sigma^{-1}(r) > \sigma^{-1}(r+1)$
and
    $\sigma^{-1}(r) < \sigma^{-1}(r+1)$.

\medskip
\emph{Case 1(a). Suppose that the cell directly above $a$ belongs to $Y(T)$.}

\smallskip

We first argue that $\sigma^{-1}(r) > \sigma^{-1}(r+1)$.
The equality $f_j(T) = \sigma^{-1}(r)$ implies that $j$ is the maximal entry
in row $r$ of $Y(T)$, by Claim 3, which means that $j$ is contained in row $r+1$ of
$\Theta(Y(T))$.
Since $f_j(T')$ is such that
    $f_j(T') \leq (t_r \circ \sigma)^{-1}(s)$
for all rows $s \neq r$ of $\Theta(Y(T))$ that contain $j$,
it follows that
\begin{gather*}
    \sigma^{-1}(r) = (t_r \circ \sigma)^{-1}(r+1)
    \geq f_j(T') = (t_r \circ \sigma)^{-1}(r) = \sigma^{-1}(r+1).
\end{gather*}
It follows that $\sigma^{-1}(r) > \sigma^{-1}(r+1)$ since $\sigma$ is
a permutation.

We now argue that $\sigma^{-1}(r) < \sigma^{-1}(r+1)$.
The equality
$f_j(T') = (t_r \circ \sigma)^{-1}(r)$
implies that row $r$ of $\Theta(Y(T))$ contains $j$.
We argue that $j$ occurs in $u$.
If $c$ does not belong to $Y(T)$, then the $d$ is a nefarious cell,
contradicting that $a$ is the most nefarious cell.
Hence, $c$ belongs to $Y(T)$.
If $j \leq c$, then $j \leq c < d \leq a \leq
b$ since the cell containing $a$ is the most nefarious cell of $Y(T)$. This
inequality contradicts the fact that $j$ is the maximal entry in row $r$ of
$Y(T)$. Hence, $j$ occurs in $u$, which means that $j$ also occurs in row $r+1$
of $Y(T)$. This implies that $\sigma^{-1}(r) = f_j(T) \leq \sigma^{-1}(r+1)$,
which in turn implies that $\sigma^{-1}(r) < \sigma^{-1}(r+1)$ since $\sigma$
is a permutation.

\medskip
\emph{Case 1(b). Suppose the cell directly above $a$ does not belong to $Y(T)$.}

\smallskip

To prove that $\sigma^{-1}(r) > \sigma^{-1}(r+1)$, it suffices to show that
$r$ is a descent of the permutation $\sigma^{-1}$.
Since the cell directly above $a$ does not belong to the shape of $Y(T)$,
there are more cells in row $r+1$ than in row $r$ of $Y(T)$.
If we denote $sh(Y(T))$ by $\tau$, then we have $\tau_r < \tau_{r+1}$.
By Lemma~\ref{Ylemma}(\ref{lemmab1}),
it follows that $\tau_r < \tau_{r + 1} - 1$,
and so $r \in \Des(\sigma^{-1})$ (by Lemma~\ref{Ylemma}(\ref{lemmab0})).

We now argue that $\sigma^{-1}(r) < \sigma^{-1}(r+1)$.
By Claim 3, the equality $f_j(T) = \sigma^{-1}(r)$ implies that $j$ is the
maximal entry in row $r$ of $Y(T)$.
Since the cell directly above $a$ does not belong to $Y(T)$, we have that
$j$ occurs immediately above an entry $e$ that is strictly to the left of $a$.
Since the cell containing $a$ is the most nefarious cell of $Y(T)$,
it follows that $j < e$. Hence, $j < a$ since $e \leq a$ because the rows of
$Y(T)$ are sorted.

Since $a > j > \ell(\alpha)$, Claim 2 tells us that
the first entry in the $a$-th row of $T$ is $f_a(T)$;
and the first entry in the $j$-th row of $T$ is $f_j(T)$.
Since $T$ is immaculate, the entries in the first column
are strictly increasing from top to bottom, and so $f_j(T) < f_a(T)$.
Hence, we have that
$$\sigma^{-1}(r) = f_j(T) < f_a(T)
= \min\left\{ \sigma^{-1}(s) : \text{$a$ appears in row $s$ of $Y(T)$} \right\}
\leq \sigma^{-1}(r+1),$$
where the last inequality follows from the fact that $a$ appears in row $r+1$
of $Y(T)$.

\medskip
\textbf{Case 2.}
\emph{Suppose that $f_j(T) = \sigma^{-1}(r+1)$
      and $f_j(T') = \sigma^{-1}(r) = (t_r \circ \sigma)^{-1}(r+1)$.}
We derive a contradiction by showing
    $\sigma^{-1}(r) > \sigma^{-1}(r+1)$
and
    $\sigma^{-1}(r) < \sigma^{-1}(r+1)$.

\medskip

\emph{Proof that $\sigma^{-1}(r) > \sigma^{-1}(r+1)$.}
If the cell labelled $b$ does not belong to $\tau = sh(Y(T))$,
then $\tau_r < \tau_{r+1}$,
and so $\tau_r < \tau_{r+1} - 1$ by Lemma~\ref{Ylemma}(\ref{lemmab1})
and $r$ is a descent of $\sigma^{-1}$ by Lemma~\ref{Ylemma}(\ref{lemmab0}).
So suppose that the cell labelled $b$ belongs to $Y(T)$.

Since $f_j(T') = (t_r \circ \sigma)^{-1}(r+1)$,
we have that $j$ occurs in row $r+1$ of $\Theta(Y(T))$.
If $j \geq b$, then $b = j$ or $j$ occurs in $v$.
In both cases, $j$ appears in row $r$ of $Y(T)$.
Hence, $\sigma^{-1}(r) \geq f_j(T) = \sigma^{-1}(r+1)$,
and so $\sigma^{-1}(r) > \sigma^{-1}(r+1)$ since $\sigma^{-1}$ is
a permutation.

If $j < b$, then $j$ appears to the left of $b$ in row $r + 1$ of
$\Theta(Y(T))$. Hence, $j \leq a$, since $a$ is immediately to the left of $b$.
And since $j$ is the maximal entry in row $r+1$ of $Y(T)$,
we have that $a = j$ and all the entries of $u$ are equal to $j$.
Now consider the tableau $T$. Since $T$ is immaculate and $b > j$,
the first entry $e_b$ in the $b$-th row of $T$ is greater
than the first entry $e_j$ in the $j$-th row of $T$.
Since $Y(T)$ has a cell with entry $b$ in its $r$-th row,
it follows the $\sigma^{-1}(r)$ appears in row $b$ of $T$,
and so $e_b \leq \sigma^{-1}(r)$.
By Claim 2, $e_j = f_j(T) = \sigma^{-1}(r+1)$, and so
$\sigma^{-1}(r) \geq e_b > e_j = \sigma^{-1}(r+1)$.

\medskip

\emph{Proof that $\sigma^{-1}(r) < \sigma^{-1}(r+1)$.}
We deal with the two cases corresponding to whether the block of cells labelled
$u$ in the diagram is empty.

\emph{Suppose at least one cell directly to the right of
    $a$ belongs to $Y(T)$.}
Since $f_j(T) = \sigma^{-1}(r + 1)$, Claim 3 implies that the maximal entry in
row $r + 1 = \sigma(f_j(T))$ of $Y(T)$ is $j$.
Hence, there is a cell to the right of $a$ that contains a $j$.
After applying $\Theta$, this cell moves into row $r$.
Hence, row $r$ of $\Theta(Y(T))$ contains $j$ and so
\begin{gather*}
    \sigma^{-1}(r) = f_j(T')
    \leq (t_r \circ \sigma)^{-1}(r) = \sigma^{-1}(r+1),
\end{gather*}
which in turn implies that $\sigma^{-1}(r) < \sigma^{-1}(r+1)$,
since $\sigma$ is a permutation.

\emph{Suppose the cells directly to the right of $a$ do not belong
    to $Y(T)$.}
Let $\tau = sh(Y(T))$. We will show that $\tau_r \geq \tau_{r+1}$.
Suppose that $\tau_r < \tau_{r+1}$. Then $\tau_r < \tau_{r+1} - 1$
by Lemma~\ref{Ylemma}(\ref{lemmab1}).
Since the cell labelled $a$ is the last cell of row $r+1$,
this implies that the cells labelled $b$ and $c$ do not belong to $Y(T)$.
But then the cell labelled $d$ is a nefarious cell of $Y(T)$,
contradicting the fact that the cell labelled $a$ is the most
nefarious cell of $Y(T)$.
Hence, $\tau_r \geq \tau_{r + 1}$.
By Lemma~\ref{Ylemma}(\ref{lemmab0}), $r$ is not a descent
of $\sigma^{-1}$ and so $\sigma^{-1}(r) < \sigma^{-1}(r+1)$.
\end{proof}
 
\begin{Example}\label{duckfish}
Continuing Example \ref{gopher} and Example \ref{weasel} we see that there exists a tableau $T_1' \in \Tal$ for which $Y(T_1') = \Theta(Y(T_1))$:
$$T_1=
\tikztableausmall{{\boldentry X, 1,1,2},{\boldentry X,\boldentry X,2},{2,3}} {\buildrel \Ys\over\longmapsto}
\tikztableausmall{{1,1},{3},{1,2,3}} {\buildrel \Theta\over\longmapsto}
\tikztableausmall{{1,1},{2,3},{1,3}} {\buildrel Y\over\longmapsfrom}
\tikztableausmall{{\boldentry X, 1,1,{\bf 3}},{\boldentry X,\boldentry X,2},{2,3}} =T'_1.
$$
Furthermore, $\sigma(T_1') = \op2,2,2\cl - \op2,2,2\cl + \op1,2,3\cl = \op1, 2, 3\cl = t_2 \circ \op1,3,2\cl = t_2 \circ \sigma(T_1)$.
 
Similarly, there exists a tableau $T_2' \in \Tal$ for which $Y(T_2') = \Theta(Y(T_2))$:
 
$$T_2=
\tikztableausmall{{\boldentry X, 1,1},{\boldentry X,\boldentry X,2,2},{1,2}}  {\buildrel \Ys\over\longmapsto}
\tikztableausmall{{X},{1,1,3},{2,2,3}} {\buildrel \Theta\over\longmapsto}
\tikztableausmall{{1,3},{1},{2,2,3}} {\buildrel Y\over\longmapsfrom}
\tikztableausmall{{\boldentry X, 1,{\bf 3}},{\boldentry X,\boldentry X,2,2},{1,2}} =T'_2~.
$$
Furthermore, $\sigma(T_2') = \op2,3,1\cl - \op2,2,2\cl + \op1,2,3\cl = \op1, 3, 2 \cl = t_1 \circ \op2,3,1\cl = t_1 \circ \sigma(T_2)$.
\end{Example}
 
\begin{Example}\label{goat}
Let $\alpha = \op 1,2\cl$, $\lambda = \op3,3,0\cl$ and let $T$ be the tableau:
$$\tikztableausmall{{\boldentry X, 2},{\boldentry X,\boldentry X,1,1},{1,1},{2}}.$$
Then $\sigma(T) = c(T) - \lambda  + Id = \op4,2,0\cl - \op3,3,0\cl + \op1,2,3\cl = \op2,1,3\cl$, so $T \in \Tal$.
$$
\tikztableausmall{{\boldentry X, 2},{\boldentry X,\boldentry X,1,1},{1,1},{2}}  {\buildrel \Ys\over\longmapsto}
\tikztableausmall{{1,4},{2,2,3,3}} {\buildrel \Theta\over\longmapsto}
\tikztableausmall{{1,3,3},{2,2,4}} {\buildrel Y^{-1}\over\longmapsto}
\tikztableausmall{{\boldentry X, {\bf 1} },{\boldentry X,\boldentry X,{\bf 2},{\bf 2}},{1,1},{2}}$$
\end{Example}

\begin{Proposition}\label{involution}
  $\Theta$ is an involution on $Y(\Tal)$.
\end{Proposition}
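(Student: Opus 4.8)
The plan is to prove directly that $\Theta^2$ is the identity map on $Y(\Tal)$. First note that $\Theta$ is a genuine self-map of $Y(\Tal)$, since Lemma~\ref{lemma:Yinversexists} gives $\Theta(Y(T)) = Y(T')$ for some $T' \in \Tal$, so $\Theta$ may legitimately be applied a second time. If $Y(T)$ has no nefarious cell, it is a fixed point by clause~\eqref{Theta1} of Definition~\ref{def:Theta} and there is nothing to check. So I assume $Y(T)$ has a nefarious cell and let $x$ be its most nefarious cell, lying in row $r+1$ and some column $c$, with entry $a$. The whole argument is local to rows $r$ and $r+1$, using the labels $x,y,u,v$ of Figure~\ref{fig:Theta2}.

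The first key step is to show that the most nefarious cell of $\Theta(Y(T))$ is again $x$, in the same position. Both clauses \eqref{Theta2a} and \eqref{Theta2b} keep $x$ fixed in column $c$ of row $r+1$, and they alter neither any column strictly to the left of $c$ nor any row strictly below $r+1$. Hence columns $1,\dots,c-1$ still contain no nefarious cell, and each cell of column $c$ below $x$ retains both its own entry and the entry directly above it, so none of them becomes nefarious; in particular $x$ remains the bottom-most candidate in its column, and once we know column $c$ carries a nefarious cell, cells in columns $>c$ are irrelevant to locating the most nefarious cell. It therefore suffices to check that $x$ itself is still nefarious. Because the rows of $Y(T)$ weakly increase, the first entry $u_1$ of the block $u$ to the right of $x$ satisfies $u_1 \geq a$; after $\Theta$ the cell above $x$ is either this $u_1$ (when $u$ is nonempty) or empty (when $u$ is empty), and in both cases $x$ is nefarious. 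Thus $c$ is again the left-most column with a nefarious cell, and $x$ is again the bottom-most such cell in that column, so $x$ is the most nefarious cell of $\Theta(Y(T))$.

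With the location of the second application pinned down, the second step is a short case check that the two block-moves are mutually inverse. If the first application uses \eqref{Theta2a} with $u$ nonempty, the cell above $x$ becomes $u_1\ne\emptyset$, so the second application again uses \eqref{Theta2a}; its ``$y$'' is $u_1$, its ``$u$'' is the block $y,v$ now to the right of $x$, and its ``$v$'' is the remainder of $u$, and moving these back restores $Y(T)$. If the first application uses \eqref{Theta2a} with $u$ empty, the cell above $x$ becomes empty, the second application uses \eqref{Theta2b}, and it lifts $y,v$ back into row $r$. If the first application uses \eqref{Theta2b} (so $y$ is empty), then necessarily $u$ is nonempty, since otherwise \eqref{Theta2b} would move no cells and fix $Y(T)$, contradicting the lemma that a tableau containing a nefarious cell is not fixed by $\Theta$; consequently the cell above $x$ becomes $u_1\ne\emptyset$, the second application uses \eqref{Theta2a}, and it pushes $u$ back down to the right of $x$. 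In every case $\Theta(\Theta(Y(T))) = Y(T)$, which is the claim.

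I expect the only delicate point to be the first step: verifying that $\Theta$ does not create a new most nefarious cell that would redirect the second application to a different location. The argument rests on two structural facts that I would isolate cleanly: that $x$ never leaves cell $(r+1,c)$, so nothing below it in its column can change nefarious status, and that the weak increase along rows yields $u_1 \geq a$, which is exactly what keeps $x$ nefarious after the swap. The degenerate configuration (clause \eqref{Theta2b} with $u$ empty) is excluded by the ``not a fixed point'' lemma, or alternatively by Lemma~\ref{Ylemma}\eqref{lemmab1}, which forces $\tau_r < \tau_{r+1}-1$ and hence a nefarious cell in a column smaller than $c$. Once these are in place, the remaining verifications are the mechanical block-moves summarised above.
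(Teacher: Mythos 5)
Your proof is correct and takes essentially the same approach as the paper's: show that $x$ remains the most nefarious cell of $\Theta(Y(T))$ (the paper asserts this more tersely, noting only that $\Theta$ changes cells to the right of and above $x$), invoke Lemma~\ref{lemma:Yinversexists} to remain in $Y(\Tal)$, and observe that the second application of $\Theta$ at $x$ undoes the first. Your added details (the inequality $u_1 \geq a$ keeping $x$ nefarious, and the exclusion of the degenerate case via the non-fixed-point lemma) merely flesh out what the paper leaves implicit.
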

\begin{proof}
  Suppose $Y(T) \in Y(\Tal)$.
  By construction, if $x$ is the most nefarious cell of $Y(T)$, then $x$ is
  also a nefarious cell of $\Theta(Y(T))$.
  Moreover, it is the most nefarious cell of $\Theta(Y(T))$ since the only
  cells changed by $\Theta$ occur to the right of and above $x$. By Lemma \ref{lemma:Yinversexists}, $\Theta(Y(T)) \in Y(\Tal)$.
  Thus, part \eqref{Theta2} of the definition of $\Theta$ applies to the same
  nefarious cell $x$ and it will undo the modifications effected by $\Theta$.
\end{proof}

\begin{Definition}\label{def:Phi}
For a composition $\alpha$ and a partition $\lambda$,
we define the map $\Phi$ on $\Tal$ by $\Phi(T) = Y^{-1}(\Theta ( Y(T)))$.
\end{Definition}

\begin{Proposition}\label{done!} For a composition $\alpha$ and a partition $\lambda$,
\begin{enumerate}
\item\label{conda2} $\Phi(T) \in \Tal$ for $T \in \Tal$.
\item\label{condb2} For all $T \in \Tal$, $\Phi^2(T)=T$.
\item\label{condc2} If $T \in \Tal$ is Yamanouchi and $\sigma(T)=Id$, then $T$ is a fixed point of $\Phi$.
\item\label{condd2} If $T \in \Tal$ is not Yamanouchi or $\sigma(T) \ne Id$, then $T' = \Phi(T) \in \Tal$ is such that:
\begin{enumerate}
\item\label{Aa}  The shape of $T$ and $T'$ are equal.
\item\label{Bb} $\sigma(T')=t_r \sigma(T)$ for an integer $r$ (where $t_r$ is the transposition which interchanges $r$ and $r+1$).
\end{enumerate}
\end{enumerate}
\begin{proof}
Condition \eqref{conda2} is a direct consequence of Lemma \ref{lemma:Yinversexists}.
Condition \eqref{condb2} is a direct consequence of Proposition \ref{involution}.
Condition \eqref{condc2} follows from the definition of $\Theta$ and Lemma \ref{Ylemma} \eqref{lemmac}.
Condition \eqref{Aa} follows from the fact that $\Theta$ preserves contents and $Y$
interchanges shape with content and Condition \eqref{Bb} follows from Lemma \ref{permutationforT'}.
\end{proof}
\end{Proposition}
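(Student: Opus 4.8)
The plan is to assemble the four properties directly from the structural results already established for the two maps $Y$ and $\Theta$ whose composition defines $\Phi = Y^{-1}\circ\Theta\circ Y$. Since essentially all of the combinatorial content has been isolated into Lemma \ref{Ylemma}, Lemma \ref{permutationforT'}, Lemma \ref{lemma:Yinversexists}, and Proposition \ref{involution}, the work here is to cite each of these in turn and to check that their hypotheses are met by the cases appearing in the statement.

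First I would dispatch the two formal properties. For \eqref{conda2}, the map $\Phi(T)$ is well defined precisely because $\Theta(Y(T))$ lies in the image $Y(\Tal)$, which is the content of Lemma \ref{lemma:Yinversexists}; that lemma produces an immaculate tableau $T'$ with $Y(T') = \Theta(Y(T))$, and the injectivity of $Y$ from Lemma \ref{Ylemma}\eqref{lemmaa} makes $T' = \Phi(T)$ unambiguous and places it in $\Tal$. For \eqref{condb2}, I would simply compute $\Phi^2 = Y^{-1}\circ\Theta\circ Y\circ Y^{-1}\circ\Theta\circ Y = Y^{-1}\circ\Theta^2\circ Y$, which is the identity because $\Theta$ is an involution on $Y(\Tal)$ by Proposition \ref{involution}.

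Next I would treat the fixed-point dichotomy. For \eqref{condc2}, when $\sigma(T)=Id$ the shape $sh(Y(T))$ is a partition by Lemma \ref{Ylemma}\eqref{lemmab}, and if in addition $\read(T)$ is Yamanouchi then $Y(T)$ is a genuine semistandard Young tableau by Lemma \ref{Ylemma}\eqref{lemmac}; its columns then strictly increase, so $Y(T)$ contains no nefarious cell and is fixed by $\Theta$ under case \eqref{Theta1} of Definition \ref{def:Theta}, giving $\Phi(T)=T$. For \eqref{condd2}, the crux is the opposite implication: whenever $\sigma(T)\ne Id$ or $T$ fails to be Yamanouchi, I must verify that $Y(T)$ does contain a nefarious cell, so that $\Theta$ genuinely moves cells. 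If $\sigma(T)\ne Id$, then $sh(Y(T))$ is not a partition by Lemma \ref{Ylemma}\eqref{lemmab}, so some row $r+1$ overhangs row $r$ by at least two columns by Lemma \ref{Ylemma}\eqref{lemmab1}, producing a cell with an empty cell above it. If instead $\sigma(T)=Id$ but $T$ is not Yamanouchi, then $Y(T)$ fails to be semistandard by Lemma \ref{Ylemma}\eqref{lemmac}; since its rows are weakly increasing by construction, the failure can only be a column violation, which again exhibits a nefarious cell.

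Once a nefarious cell is present, $\Phi(T)\ne T$, and the two remaining claims follow cleanly: property \eqref{Aa} holds because $\Theta$ preserves the content of $Y(T)$, which under $Y$ encodes the row-distribution and hence the shape of $T$, so $sh(\Phi(T)) = sh(T)$; and property \eqref{Bb} is exactly Lemma \ref{permutationforT'}, applied with $T'=\Phi(T)\ne T$. The main obstacle is the bookkeeping in \eqref{condd2}, namely translating each of the two failure hypotheses into the geometric existence of a nefarious cell; this is where the interplay between $\sigma(T)$, the shape of $Y(T)$, and the Yamanouchi condition, all governed by Lemma \ref{Ylemma}, must be handled with care, whereas the rest of the argument is a routine invocation of the preceding results.
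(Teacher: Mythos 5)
Your proposal is correct and follows the same route as the paper's proof: each of the four properties is read off from Lemma \ref{lemma:Yinversexists}, Proposition \ref{involution}, Lemma \ref{Ylemma}, and Lemma \ref{permutationforT'} respectively, with \eqref{Aa} coming from the fact that $\Theta$ preserves the content of $Y(T)$ and $Y$ exchanges shape with content. The one place you go beyond the paper's (very terse) argument is in checking explicitly that $Y(T)$ contains a nefarious cell whenever $\sigma(T)\ne Id$ or $T$ is not Yamanouchi --- via Lemma \ref{Ylemma}\eqref{lemmab}, \eqref{lemmab1}, and \eqref{lemmac} --- which is indeed needed (though left implicit in the paper) because Lemma \ref{permutationforT'} only applies under the hypothesis $\Phi(T)\ne T$.
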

 
\begin{moredetails}
The involution $\Theta$ above can be modified and used for the general multiplication of immaculate basis $\fS_\alpha \fS_\delta$.
Using the same reasoning as above, we have
\begin{equation}\label{expGprodT}
\fS_\alpha \fS_\delta  = \sum_{\sigma \in S_m} \sum_{sh(T)=\gamma/\alpha \atop c(T)={\delta+\sigma-Id}} (-1)^\sigma \fS_\gamma.
\end{equation}
The double  sum on the right is over  pairs $(\sigma,T)$ where  $\sigma\in S_m$ and $T$ is an immaculate tableau of shape
$\gamma/\alpha$ and content $\delta+\sigma-Id$ (componentwise).
We apply sign reversing involution among the pair $(\sigma,T)$ as before using $Y_{\sigma}(T)$.
Here we have to remember the dependence of $Y$ on $\sigma$ to characterize the image and understand the inverse.
In the case when $\delta$ is not a partition we could get pairs
$$   \tikztableausmall{{ $b'$,X},{$a'$,$a$}}  \text{ or }  \tikztableausmall{{ $b$},{$a$}} $$
where $b'$ or $a$ correspond to an element in the first column of $T$. If we where to apply the involution in those cases,
we could get a tableau that is not in the
image of $Y_{\sigma'}$ (hence not in the sum above). So we have to add those as fixed point in our definitions.
This create additional fix points as well as fixed points
for permutation $\sigma\ne Id$. But this is to be expected as the general rule contains negative numbers.
We begin by modifying the definition of Yamanouchi:
\begin{Definition}
A pair $(\sigma,T)$ of a permutation and a skew immaculate tableau $T$ will be called I-Yamanouchi if for every rows $r,r+1$ of
$Y_\sigma(T)$ the left-most pair (if it is present)
$$   \tikztableausmall{{ $b'$,X},{$a'$,$a$}}  \text{ or }  \tikztableausmall{{ $b$},{$a$}} $$
is such that $\theta_r(Y_\sigma(T))$ is not in the image of $Y_{(r\,\,r+1)\circ \sigma}$.
\end{Definition}

This definition is rather hard to use and it is difficult to give a direct characterization of I-Yamanouchi tableaux.
Given this definition we can now modify the involution $\Theta$ and we get a formula for the product
$ \fS_\alpha \fS_\delta$ that involves many fewer terms then Equation \eqref{expGprodT}:
\begin{Corollary} \label{cor:gen}
\begin{equation}\label{expYprodT}
\fS_\alpha \fS_\delta  = \sum_{\sigma \in S_m} \sum_{{sh(T)=\gamma/\alpha \atop c(T)
={\delta+\sigma-Id}}\atop (\sigma,T) \text{ I-Yamanouchi}} (-1)^\sigma \fS_\gamma.
\end{equation}
\end{Corollary}
 
Unfortunately, our experimentation show that this formula is not the best possible.
In the product of $\fS_{11}\fS_{113}$ we get two extra $\Theta$ fixed points:
$$   (\sigma=132,\tikztableausmall{{ X,1},{X,3},{2,2},{3}} ) \text{ and } (\sigma= 231, ,\tikztableausmall{{ X,3},{X,1},{1,2},{2}})$$
which cancel (having opposite sign but same $\gamma$ in \eqref{expGprodT}.
Still the fix points depends only on $\ell(\alpha)$ and not $\alpha$ itself.

Lets look at some examples: first easy, $\fS_2 \fS_{1,2}=\fS_{2,1,2}$. The only I-Yamanouchi tableau is
$$
\big((1,2), \tikztableausmall{{X,X},{1},{2,2}}\big)  {\buildrel Y_{(1,2)}\over\longmapsto}
\tikztableausmall{{2},{3,3}}
$$
All other terms cancel. Here are some example of the involution
$$
\tikztableausmall{{X,X, 1,1,2}}  {\buildrel Y_{(2,1)}\over\longmapsto}
\tikztableausmall{{1},{1,1}} {\buildrel \Theta\over\longmapsto}
\tikztableausmall{{1},{1,1}} {\buildrel Y_{(1,2)}^{-1}\over\longmapsto}
\tikztableausmall{{ X, X,1,2,2}}
$$
$$
\tikztableausmall{{X,X, 1},{2,2}}  {\buildrel Y_{(1, 2)}\over\longmapsto}
\tikztableausmall{{1},{2,2}} {\buildrel \Theta\over\longmapsto}
\tikztableausmall{{1},{2,2}} {\buildrel Y_{(2, 1)}^{-1}\over\longmapsto}
\tikztableausmall{{ X, X,2},{1,1}}
$$
 
As a  second example consider
$$\fS_2 \fS_{2,4}=\fS_{3,1,4}+\fS_{2,2,4}+\fS_{3,2,3}-\fS_{5,3}-\fS_{4,3,1}$$
The Yamanouchi tableaux are
$$
\big((1,2), \tikztableausmall{{X,X,1},{1},{2,2,2,2}}  \big); \ \big((1,2),  \tikztableausmall{{X,X},{1,1},{2,2,2,2}}   \big); \ \big((1,2),  \tikztableausmall{{X,X,1},{1,2},{2,2,2}}   \big); $$
$$ \big((2,1),\tikztableausmall{{X,X,2,2,2},{1,1,1}}  \big); \ \big((2,1),  \tikztableausmall{{X,X,2,2},{1,1,1},{2}}  \big); \ $$
Some example of the involution in action:
$$
\tikztableausmall{{X,X},{1,1,2},{2,2,2}}  {\buildrel Y_{(1,2)}\over\longmapsto}
\tikztableausmall{{2,2},{2,3,3,3}} {\buildrel \Theta\over\longmapsto}
\tikztableausmall{{3,3,3},{2,2,2}} {\buildrel Y_{(2,1)}^{-1}\over\longmapsto}
\tikztableausmall{{X,X},{1,1,1},{2,2,2}}  
$$
$$
\tikztableausmall{{X,X,1},{1,2,2,2},{2}}  {\buildrel Y_{(1,2)}\over\longmapsto}
\tikztableausmall{{1,2},{2,2,3,3}} {\buildrel \Theta\over\longmapsto}
\tikztableausmall{{1,2,3},{2,2,2}} {\buildrel Y_{(2,1)}^{-1}\over\longmapsto}
\tikztableausmall{{X,X,2},{1,1,1,2},{2}}  
$$
$$
\tikztableausmall{{X,X,1,1},{2,2,2,2}}  {\buildrel Y_{(1,2)}\over\longmapsto}
\tikztableausmall{{1,1},{2,2,2,2}} {\buildrel \Theta\over\longmapsto}
\tikztableausmall{{1,1,2},{2,2,2}} {\buildrel Y_{(2,1)}^{-1}\over\longmapsto}
\tikztableausmall{{X,X,2,2},{1,1,1,2}}  
$$
This last example shows some of the problems we run into trying to define the I-Yamanouchi conditions directly.
From the proof of Theorem~\ref{ImmLRrule} we see that the only time $\theta_r(Y_\sigma(T))$ is not in the image of $Y_{(r\,\,r+1)\circ \sigma}$
is if $b'$ or $a$ correspond to an element in the first column of $T$. This is a necessary condition but it not a sufficient condition as shown
with $\Theta^{-1}$ just above.
 
 \section{classical Littlewood-Richardson rule}
In this section, we use Theorem~\ref{ImmLRrule} to deduce the classical Littlewood-Richardson rule in Theorem~\ref{LRrule}.
This is particularly interesting in light of Corollary~\ref{cor:newsym}
which give a new relation among the non-commutative Littlewood-Richardson coefficients.
This expression involves a fixed number of terms for all partitions of the same length.  
In particular, the complexity of computing the left Pieri rule $\fS_{\op1^n\cl}\fS_\lambda$
is as great as computing a general Littlewood-Richardson coefficient.
One can now understand why the left Pieri rule has to be much more complex than the right Pieri rule.
 
We now show combinatorially the identity in Corollary~\ref{ImmLRgivesLR}.
The left side of the identity counts the number of Yamanouchi  semi-standard  tableaux of shape $\nu / \mu$ and content $\lambda$.
The right side of the identity counts (with sign) the number of Yamanouchi immaculate tableaux of shape
$(\nu\star\sigma) / \mu$ and content $\lambda$.
This can be represented as
\begin{equation}\label{LRexpinImm}
c_{\mu,\lambda}^\nu = \sum_{sh(T)=\nu/\mu, \ semi-standard \atop c(T)=\lambda, \  
Yamanouchi} \!\!\!\!1 \quad = \  \sum_{\sigma \in S_m} \sum_{sh(T)=(\nu\star\sigma)/\mu,
\ immaculate \atop c(T)=\lambda,\ Yamanouchi} (-1)^\sigma .
\end{equation}
 
A combinatorial proof of this  identity is again by constructing a sign reversing involution
$\Psi$ on the set of pair $(\sigma,T)$, where $T$ is an immaculate Yamanouchi tableau of
$sh(T)=(\nu\star\sigma)/\mu$ and $c(T)=\lambda$. Unlike Section~\ref{sec:LRimm}, the involution
must now act on the shape of $T$ instead of its content.
The desired involution must preserve the content and the property that the tableau is immaculate Yamanouchi.
As before, we remark that $\nu\star\sigma$ is a partition if and only if $\sigma$ is the identity.
Furthermore, an  immaculate Yamanouchi tableau $T$ of $sh(T)=(\nu\star\sigma)/\mu$ and $c(T)=\lambda$
is semi-standard if and only if $\sigma=Id$ and $T$ is column strict. These are the desired fixed points of our involution.
This time, we work on $T$ directly (we do not need to introduce a secondary tableau
$Y$ as in Section~\ref{sec:LRimm}). Given a pair $(\sigma,T)$ as above,
we scan the column of $T$ to find the left-most column that contains a nefarious  pattern
 $$ \tikztableausmall{{ X},{$a$}} \qquad \text{ or }\quad \tikztableausmall{{$b$},{$a$}},$$
where $b\ge a$ and the cell above $a$ {\sl is not a cell of $\mu$}.  If there is no such pair,
$T$ is semi-standard and we must have $\sigma=Id$. In this case we have a fixed point $\Psi(Id,T)=(Id,T)$.
Now, if such column exists, let $r,r+1$ be the two rows where the lowest position of such pair (greatest values of $r$).
Remark that by the definition of immaculate tableau, this nefarious  pattern {\sl cannot} be in the first column of $T$.
To define $\Psi$ we use a different procedure than in Section~\ref{sec:LRimm}.
 
Given a pair $(\sigma,T)$ in the sum~\eqref{ImmLRgivesLR}, fix $r,r+1$ as above. Let
$$\nu\star\sigma=(\gamma_1,\ldots,\gamma_r,\gamma_{r+1},\ldots,\gamma_m).$$
We want $\Psi(\sigma,T)=(\sigma\circ(r\,r+1),T')$ where $T'$ is immaculate Yamanouchi
$sh(T')=(\nu\star\sigma\star (r\,r+1))/\mu$ and $c(T)=\lambda$.
Here, $\nu\star\sigma\star(r\,r+1)=(\gamma_1,\ldots,\gamma_{r+1}-1,\gamma_r+1,\ldots,\gamma_m)$.
We have two cases to consider depending if $\gamma_r<\gamma_{r+1}$ or $\gamma_r\ge\gamma_{r+1}$.
In the case $\gamma_r<\gamma_{r+1}$, to get $T'$ with the appropriate shape and content,
we have to move $\gamma_{r+1}-\gamma_r-1$ cells of $T$ from row $r+1$ to row $r$.
In the case $\gamma_r\ge\gamma_{r+1}$, we have to move $\gamma_r-\gamma_{r+1}+1$ from row $r$ to row $r+1$.
There is always at least one cell that moves. This is a consequence of the fact that
if $\gamma\star(r\, r+1)=\gamma$, then $s_\gamma=-s_\gamma=0$ and it is not possible that
$\nu=\gamma\star\sigma^{-1}$.  Thus $|\gamma_{r+1}-\gamma_r-1|>0$.
 
Let us start with the case where $\gamma_r<\gamma_{r+1}$. Since $\mu$ is a partition
we have that the first cell of $T$ is rom $r$ is weakly to the right of the first cell of $T$ in row $r+1$. So the rows $r,r+1$ of $T$ looks like
$$ \tikztableausmall{{ X,X,X,$b_1$,$\scriptstyle \cdots$,$b_s$,$b$,$d_1$,$\scriptstyle \cdots$,$d_t$},
     {$c_1$,$\scriptstyle \cdots$,$c_k$,$a_1$,$\scriptstyle \cdots$,$a_s$,$a$,$e_1$,
     $\scriptstyle \cdots$,$e_t$,$f_1$,$\scriptstyle \cdots$,$f_p$}},$$
where $p=\gamma_{r+1}-\gamma_r>1$,  $k,s,t\ge 0$,
$b_i<a_i$ and  $b$ with $b\ge a$ may not be present when $t=0$. We now shift the row $r$ to the right, until all column are strictly increasing
from $r$ to $r+1$. We have to move at least $p$ cells to the right so that the result is a two row semi-standard tableau. For example
$$ \tikztableausmall{{ X,X,X,1,1,2,5,5,6},   {1,1,2,2,3,3,4,5,6,7,8}}  \mapsto  
       \tikztableausmall{{ X,X,X,X,X,1,1,2,5,5,6},    {1,1,2,2,3,3,4,5,6,7,8}}.$$
Here, $b=5$ is in column $7$ and we have to move the top row two cells to the right before we get a column strict tableau.
In general it will look like
$$ \tikztableausmall{{ X,X,X,X,X,$b_1$,$\scriptstyle \cdots$,$b_s$,$b$,$d_1$,$\scriptstyle \cdots$,$d_t$},
     {$x_1$,,$\scriptstyle \cdots$,, $x_u$,$y_1$,,$\scriptstyle \cdots$,,$y_v$}}.$$
This is a semi-standard skew tableau with two rows.
We now perform $p-1=\gamma_{r+1}-\gamma_r-1>0$ classical jeu-de-taquin using the unique inner cell each time
(see~\cite{Sagan} for a full description of jeu-de-taquin).
This will move $\gamma_{r+1}-\gamma_r-1$ cells from row $r_1$ to row $r$ as desired. In our example above,
$p=2$ so we have to do a single jeu-de-taquin and we get
$$\tikztableausmall{{ X,X,X,X,{\boldentry X},1,1,2,5,5,6},    
{1,1,2,2,3,3,4,5,6,7,8}} \mapsto   \tikztableausmall{{ X,X,X,X,1,1,2,5,5,5,6},    {1,1,2,2,3,3,4,6,7,8}}
       .$$
 
The row $r$ was shifted at least $p$ cell to the right. This means that the entries $b_1,b_2,\ldots,b_s$ are always strictly
smaller then the entry above them is all possible $p-1$ jeu-de-taquin moves.
Hence the entries $b_1,b_2,\ldots,b_s$ will slide to the left $p-1$ cells
as we perform $p-1$ jeu-de-taquin. This will leave unchanged all the cells
that are weakly to the left of the cell with $b_s$. In particular all the entries
weakly to the left of $a$ in row $r+1$ will no move. The tableau $T'$
is obtain by sliding back the new row $r$ to the original position given by $\mu_r$.
The row $r,r+1$ of $T'$ will now look like
$$ \tikztableausmall{{ X,X,X,$b_1$,$\scriptstyle \cdots$,$b_s$,$b'$,$d'_1$,,$\scriptstyle \cdots$,,$d'_q$},
     {$c_1$,$\scriptstyle \cdots$,$c_k$,$a_1$,$\scriptstyle \cdots$,$a_s$,$a$,$e'_1$,$\scriptstyle \cdots$,$e'_j$}}.$$
If $b'$ is not $b$, then it was replaced by a number that appeared to the right of $a$ in the original row $r+1$. Hence $b'\ge a$.
Since no entries strictly to the left of $a$ changed between $T$ and $T'$ we have that $b'\ge a$ is still the  pair we will find in $T'$ according
to the choice of $r,r+1$. The tableau $T'$ has the desired shape and content.
It is an immaculate tableau because the pair $b'\ge a$ is not in the first column.
Hence the first column is strict. The fact that $T'$ is still immaculate will follow from  
Lemma~\ref{lem:jdtyama} bellow. In our running example $T\mapsto T'$ gives
$$ \tikztableausmall{{ X,X,X,1,1,2,5,5,6},   {1,1,2,2,3,3,4,5,6,7,8}}  \mapsto  
       \tikztableausmall{{ X,X,X,1,1,2,5,5,5,6},    {1,1,2,2,3,3,4,6,7,8}}.$$
 
Now the case where $\gamma_r\ge\gamma_{r+1}$. The tableau $T$ in row $r,r+1$ will now look like
$$ \tikztableausmall{{ X,X,X,$b_1$,$\scriptstyle \cdots$,$b_s$,$b$,$d_1$,$\scriptstyle \cdots$,$d_t$,$f_1$,$\scriptstyle \cdots$,$f_p$},
     {$c_1$,$\scriptstyle \cdots$,$c_k$,$a_1$,$\scriptstyle \cdots$,$a_s$,$a$,$e_1$,$\scriptstyle \cdots$,$e_t$}},$$
where $p=\gamma_r-\gamma_{r+1}\ge 0$, $k,s,t\ge 0$, $b_i<a_i$ and  $b\ge a$ must be present. In order to reverse the step above,
the row $r$ of $T$ is first shifted at least 1 cell to the right (since $b\ge a$).
In particular, it will be possible to perform $p+1$ jeu-de-taquin inverse.
As above, the entry weakly to the left of $b_s$ will not change in row $r$ and the entry weakly to the left of $a$ will not change in row $r+1$.
The resulting tableau $T'$ obtain after the $p+1$ jeu-de-taquin inverse  and shifted back,
will have the required shape and content and will be immaculate.
The Yamanouchi condition follows from Lemma~\ref{lem:jdtyama} below.  This conclude our combinatorial proof of Equation~\eqref{LRexpinImm}
 
\begin{Lemma}\label{lem:jdtyama}
If $T$ is a Yamanouchi skew semi-standard tableau, then jeu-de-taquin or jeu-de-taquin inverse preserve the Yamanouchi condition.
\end{Lemma}
 
\todo{NB: I think this is a well known results. Now that I state it this way, I believe it should be in the work of Lascoux-Schutz.
This has to do with the fact that Yamanouchi tableaux rectify (with full jeu-de-taquin) to the super-standard tableau.
Isn't that how Littlewood-Richardson is shown using jeu-de-taquin?? Anyway, I can write a proof, but if we find it in the literature it
would save some paper...}
 
\begin{Remark} The involution $\theta_r$ of Section~\ref{sec:LRimm} would not work to define
$\Psi$ as it does not preserve the Yamanouchi condition.
It may be possible to modify $\theta_r$ so that the two involution $\Theta$ and $\Psi$ acting on the
$Y_\sigma(T)$ and $T$ respectively looks similar,
but this would have complicated the proof in Section~\ref{sec:LRimm}.
\end{Remark} 

\end{moredetails}

\section{A geometric interpretation of the immaculate Littlewood-Richardson coefficients }\label{niceass}
 
There are several geometric interpretations of the Littlewood-Richardson coefficients
$c_{\mu, \lambda}^\nu$ as the number of integral points inside a certain polytope (see for instance \cite{BZ}, \cite{GZ}, \cite{PakVallejo}).
In this section we construct a polytope $\polytope$ that would give rise to an analogous interpretation of the immaculate Littlewood-Richardson coefficients $C_{\alpha,\nu}^{\beta}$, where $\alpha$ and $\beta$ are compositions, and $\nu$ is a partition.

Let $m$ be the maximum of the three integers $\ell(\alpha),\ell(\beta)$ and $\ell(\nu)$. $\polytope$ will live in an $m+2 \choose 2$ dimensional vector space with coordinates $a_{ij}$ where $0 \leq i \leq j \leq m$ and $a_{00} = 0$.

We construct $\polytope$ in a fashion similar to that of \cite{PakVallejo}. If necessary, extend $\alpha$, $\beta$, and $\nu$ with $0$'s at the end so that they all have length $m$. Each point in $\polytope$ will have for coordinates, the integers in the following array, subject to certain linear inequalities.
  $$\begin{array}{ccccccccc}
 &&&&a_{00} &&& \\
 &&&a_{01}&&a_{11}&&\\
 &&a_{02}&&a_{12}&&a_{22}\\\\
 &\udots&&\ddots&&\udots&&\ddots\\\\
 a_{0m}&&a_{1m}&&\cdots&&a_{m-1\,m}&&a_{mm}\\
 \end{array}$$
 
The linear inequalities defining $\polytope$ are the following.
Let $N=|\nu|$, then
\begin{enumerate}
\item \label{C1} $a_{ij}\ge 0$ for $1 \leq i \leq j \leq m$
\item \label{C2} $a_{0j}=\alpha_j$ for $1 \leq i \leq m$
\item \label{C3} $\sum_{p=0}^j  a_{pj} = \beta_j$ for $1 \leq j \leq m$
\item \label{C4} $\sum_{q=i}^m a_{iq} = \nu_i$ for $1 \leq i \leq m$
\item \label{C5} $\sum_{q=i}^{j} a_{iq} -  \sum_{q=i+1}^{j} a_{i+1\,q}\ge 0$ for $1 \leq i \leq j \leq m$
\item \label{C6} $\sum_{p=0}^{i-1} N a_{pj} -  \sum_{p=0}^{i} a_{p\,j+1}\ge 0$ for $1\leq i \leq j < m$
\end{enumerate}
 
These equations define a polytope because they are the intersection of planes and half planes.  By comparison with
the Littlewood-Richardson coefficients, if $\alpha = \mu$, $\beta = \lambda$ where $\mu$, $\lambda$ and $\nu$ are
partitions such that $|\lambda| = |\mu| + |\nu|$, then \cite[Lemma 3.1]{PakVallejo} states that $c_{\mu,\nu}^\lambda$
is equal to the number of integer points satisfying conditions \eqref{C1} through \eqref{C5} and the inequality
 
$$\hbox{(CS) \hskip .2in}\sum_{p=0}^{i-1} a_{pj} - \sum_{p=0}^i a_{p\,j+1} \geq 0 \hbox{ for all } 1 \le i \le j < m$$
in place of condition \eqref{C6}.
 
\begin{Theorem}\label{THM:geom1} For $\alpha, \beta$ compositions and $\nu$ a partition all of length less than
or equal to $m$ such that $|\beta| = |\alpha| + |\nu|$, $C_{\alpha, \nu}^{\beta}$ is equal to the number of
integer points satisfying conditions \eqref{C1}--\eqref{C6}.
\end{Theorem}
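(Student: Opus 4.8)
The plan is to set up an explicit bijection between the Yamanouchi immaculate tableaux of shape $\beta/\alpha$ and content $\nu$ counted by $C_{\alpha,\nu}^{\beta}$ and the integer points of the polytope cut out by \eqref{C1}--\eqref{C6}. To a tableau $T$ I associate the triangular array $(a_{ij})$ by letting $a_{ij}$ be the number of cells of $T$ in row $j$ containing the entry $i$ for $i\ge 1$, and setting $a_{0j}=\alpha_j$ to record the inner shape. Under this dictionary the letter-content of row $j$ is read off a column of the array and the multiplicity of each letter off a row, so that \eqref{C2}, \eqref{C3} and \eqref{C4} become direct transcriptions of the inner shape $\alpha$, the outer shape $\beta$, and the content $\nu$, while \eqref{C1} is simply the nonnegativity of cell counts.

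First I would prove the structural lemma that in \emph{any} tableau with weakly increasing rows whose reading word is Yamanouchi, every entry in row $j$ is at most $j$. This is an induction on $j$ using the ballot condition: the first letter read in row $j$ is its largest entry, and a letter exceeding $j$ would be forced by the lattice property to outnumber the smaller letters available in rows $1,\dots,j-1$. This lemma guarantees that $a_{ij}=0$ whenever $i>j$, so that $(a_{ij})$ genuinely lives in the triangular range $0\le i\le j\le m$, and it is precisely what makes the truncated sum in \eqref{C3} equal $\beta_j$.

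The substance of the argument is to match the two remaining families of inequalities with the two defining features of a Yamanouchi immaculate tableau: the lattice (Yamanouchi) property of the reading word, and the strict increase of the first column. For the part of the correspondence governed by \eqref{C1}--\eqref{C5} I would follow \cite[Lemma 3.1]{PakVallejo} essentially verbatim, since in that framework \eqref{C5} encodes exactly the ballot condition and carries over unchanged. The point of departure is the strictness requirement. Pak and Vallejo's condition (CS) forces strict increase down \emph{every} column and so describes column-strict (semistandard) tableaux, whereas the immaculate rule imposes strictness only in the first column. I would show that \eqref{C6} is exactly the relaxation keeping this single constraint: the coefficient $N=|\nu|$ is taken large enough that \eqref{C6} is automatically satisfied as soon as $\sum_{p=0}^{i-1}a_{pj}\ge 1$, and is binding only when $\sum_{p=0}^{i-1}a_{pj}=0$, where it forces $\sum_{p=0}^{i}a_{p,j+1}=0$. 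Translating back, this says that whenever row $j$ carries no inner cell and no entry smaller than $i$, row $j+1$ carries no entry $\le i$; that is, the minimal entry of each first-column cell strictly exceeds the one above it, which is the first-column strictness that distinguishes immaculate tableaux from semistandard ones.

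Assembling these pieces, the map $T\mapsto(a_{ij})$ restricts to the desired bijection: the forward map lands in the polytope by the checks above, and the inverse reconstructs $T$ row by row from the array exactly as in the Pak--Vallejo argument, with the validity of the reconstructed tableau (weakly increasing rows, strictly increasing first column, and Yamanouchi reading word) guaranteed by \eqref{C5}, \eqref{C6}, together with the fact that a composition $\beta$ has no empty rows above its support. I expect the hardest step to be this last verification: showing that \eqref{C6} with the coefficient $N$ captures first-column strictness \emph{and nothing more}, and in particular that \eqref{C5} and \eqref{C6} jointly recover the full ballot condition once the rows are top-justified. The delicate bookkeeping is the interaction with the inner shape $\alpha$, since the loss of column-strictness means one can no longer inherit the lattice property column by column as in the semistandard case, and one must check that $N=|\nu|$ is genuinely large enough in the presence of nonzero $\alpha_{j+1}$.
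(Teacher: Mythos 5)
Your proposal follows essentially the same route as the paper's proof: the same dictionary $T\mapsto(a_{ij})$ with $a_{0j}=\alpha_j$ and $a_{ij}$ counting the entries $i$ in row $j$, the same identification of \eqref{C1}--\eqref{C5} with the Pak--Vallejo conditions (with \eqref{C5} encoding the ballot property), and the same reading of \eqref{C6} as enforcing first-column strictness and nothing more via the large coefficient $N$. The one point you flag as delicate --- whether $N=|\nu|$ really dominates $\sum_{p=0}^{i}a_{p\,j+1}$ when $\alpha_{j+1}$ is large --- is a genuine concern, but the paper's own proof asserts this domination without justification, so your sketch is faithful to (and no weaker than) the published argument.
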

 
\begin{proof}
We want to show that there is a bijection between the integral points inside the region
defined by conditions \eqref{C1} through \eqref{C6} and Yamanouchi
immaculate tableaux of shape $\beta/\alpha$ and content $\nu$. Fix a Yamanouchi immaculate tableau $T$
of shape $\beta/\alpha$ and content $\nu$.
Let  $A(T)=(a_{ij})_{0 \leq i\leq j\leq m}$ be the point defined by
\begin{enumerate}[(i)]
\item \label{Ci} $A(T)_{00}=0$
\item \label{Cii} $A(T)_{0j}=\alpha_j$ for $1 \leq j \leq m$
\item \label{Ciii} $A(T)_{ij}$ is the number of entries $i$ in  row $j$ of $T$, for $1\le i\le j\le m$.
\end{enumerate}
 
We can check that $A(T)$ is in the region defined by \eqref{C1}--\eqref{C6}.
Condition \eqref{Ciii} implies that condition \eqref{C1} will hold and condition \eqref{Cii} is a statement that \eqref{C2}
will be true.  The sum of the entries $A(T)_{pj}$ for $0 \le p \le j$ is equal to the number of entries in row $j$
and hence will sum to $\beta_j$ (hence \eqref{C3} will hold).  The sum of the entries $A(T)_{iq}$ for $i \le q \le m$ is
the number of entries of $T$ equal to $i$ and this is equal to $\nu_i$ which implies~\eqref{C4}.
 
The sum $\sum_{q=i}^{j} A(T)_{iq}$ represents the number of entries $i$ in rows $i$ through $j$ 
in $T$.  If $T$ is Yamanouchi, then the number of labels $i$ in rows $i$ through $j$ is larger than or equal to the number of
labels $i+1$ in rows $i+1$ through $j$.  This implies the inequality
$\sum_{q=i}^{j} A(T)_{iq} \ge \sum_{q=i+1}^{j} A(T)_{i+1\,q}$ and \eqref{C5} holds.
 
Assume that $T$ is not immaculate column strict,
then there will be a row $j$ where $\alpha_j = \alpha_{j+1}=0$
and the first non-zero entry in row $j+1$ is smaller than or equal to the first non-zero entry in row $j$.  
If the first non-zero entry in row $j+1$ is $i$, then $\sum_{p=0}^{i} A(T)_{p\,j+1}$
will be non-zero and $\sum_{p=0}^{i-1} A(T)_{pj}$ will be $0$, hence \eqref{C6} will not hold.  
 
Now assume that $T$ is immaculate column strict,
then every row either has $A(T)_{0j} = \alpha_j>0$
or $\alpha_j = \alpha_{j+1}=0$ and the smallest entry in row $j$ is smaller than the smallest entry in row $j+1$.
If $\alpha_j>0$, then because $N$ is chosen to be large,
$\sum_{p=0}^{i-1} N A(T)_{pj} \gg \sum_{p=0}^{i} A(T)_{p\,j+1}$ for all $i$ such that $1 \leq i \leq j$.
If $\alpha_j = \alpha_{j+1}=0$ and the smallest entry in row $j$ is $i'$, then
$\sum_{p=0}^{i-1} N A(T)_{pj} - \sum_{p=0}^{i} A(T)_{p\,j+1}=0$ if $i\le i'$ and
$\sum_{p=0}^{i-1} N A(T)_{pj} \gg \sum_{p=0}^{i} A(T)_{p\,j+1}$ if $i> i'$.  In all of these cases \eqref{C6}  holds.

Given a point $(a_{ij})$ which satisfies conditions \eqref{C1}--\eqref{C6}, let $T$ be the skew tableau of
with inner shape $\op a_{01}, a_{02}, \ldots, a_{0m}\cl = \alpha$ and with $a_{ij}$ entries
$i$ in row $j$ for $1 \leq i < j \leq m$ such that these entries are arranged so they are weakly increasing in the rows.  
The $j^{th}$ row of $T$ has a total of $\sum_{p=0}^j a_{pj} = \beta_j$ cells.  The number of entries $i$ will be
$\sum_{q=i}^m a_{iq} = \lambda_i$.  The argument above shows that Condition \eqref{C5} is equivalent to
the tableau is Yamanouchi and Condition \eqref{C6} implies that $T$ is strictly increasing
in the first column.
\end{proof}
 
\begin{Example} \label{ex:smallarray}
The two Yamanouchi immaculate tableaux from Example \ref{ex:smallex} have arrays $A(T)$ represented by the following triangles.
 
$$
\begin{array}{ccccccc}
&&&0&&&\\
&&1&&2&&\\
&2&&1&&1&\\
0&&0&&1&&1\\
\end{array}
\hskip .3in
\begin{array}{ccccccc}
&&&0&&&\\
&&1&&2&&\\
&2&&0&&2&\\
0&&1&&0&&1\\
\end{array}
$$
\end{Example}
 
\begin{Example} \label{ex:largerex}
A larger example of a skew immaculate Yamanouchi tableau with $\alpha = \op 3,2,3,1\cl$, $\beta = \op 3,5,5,5,2\cl$ and $\lambda = \op4,3,3,1\cl$
is represented by the following
$$ T= \tikztableausmall{{\boldentry X,\boldentry X,\boldentry X},{\boldentry X, \boldentry X, 1,1,1},{\boldentry X,\boldentry X,\boldentry X,1,2},{\boldentry X,2,2,3,3},{3,4}}.$$
The array for this tableau is
$$A(T) = \begin{array}{ccccccccccc}
&&&&&0&&&&&\\
&&&&3&&0&&&&\\
&&&2&&3&&0&&&\\
&&3&&1&&1&&0&&\\
&1&&0&&2&&2&&0&\\
0&&0&&0&&1&&1&&0\\
\end{array}$$
\end{Example}
 
We now consider the linear transformation where
 $$h_{ij} = \sum_{p=0}^i \sum_{q=p}^j  a_{pq}.$$
We refer the point resulting of this transformation the {\it hive array}
corresponding to the point or tableau.
This transformation sends the region in Theorem~\ref{THM:geom1} into the following region
cut out by the intersection of the inequalities:
\begin{enumerate}[(1$'$)]
\item \label{Cp1} $h_{0j}-h_{0\,j-1}=\alpha_j$ for $1 \leq j \leq m$
\item \label{Cp2} $h_{jj}-h_{j-1\,j-1} = \beta_j$ for $1 \leq j \leq m$
\item \label{Cp3} $ h_{im} - h_{i-1\,m} = \nu_i$ for $1 \leq i \leq m$
\item \label{Cp4} $ h_{ij} - h_{i\,j-1}  \ge h_{i-1\,j}-h_{i-1\,j-1}$ for $1 \leq i < j \leq m$
\item \label{Cp5} $  h_{ij}-h_{i-1\,j} \ge h_{i+1\,j+1} - h_{i\,j+1}$ for $1 \leq i \leq j < m$
\item \label{Cp6} $N( h_{i-1\,j}-h_{i-1\,j-1})\ge h_{i\,j+1}-h_{ij}$ for $1 \leq i \leq j < m$
\end{enumerate}
 
\begin{Corollary}\label{cor:hives}  Equations (\ref{Cp1}$'$)--(\ref{Cp6}$'$)
determine a polytope $\polytope$ such that the number of integral points inside the region is equal to  $C_{\alpha, \nu}^{\beta}$.
\end{Corollary}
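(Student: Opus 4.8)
The plan is to exhibit the linear change of coordinates $\phi\colon(a_{ij})\mapsto(h_{ij})$ defined by $h_{ij}=\sum_{p=0}^{i}\sum_{q=p}^{j}a_{pq}$ as a bijection between the lattice points of the polytope cut out by \eqref{C1}--\eqref{C6} and those of the polytope cut out by (\ref{Cp1}$'$)--(\ref{Cp6}$'$), and then to invoke Theorem~\ref{THM:geom1}. The first thing to record is that $\phi$ is unimodular: ordering the coordinates so that $h_{ij}$ depends only on the $a_{pq}$ with $p\le i$ and $q\le j$ makes $\phi$ unipotent lower-triangular, and its inverse is the integral mixed second difference $a_{ij}=h_{ij}-h_{i-1,j}-h_{i,j-1}+h_{i-1,j-1}$ (with the evident boundary conventions $a_{00}=h_{00}=0$, $h_{j,j-1}=h_{j-1,j-1}$ and $h_{-1,j}=0$). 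Hence $\phi$ restricts to a bijection on integer points, and it suffices to show that it carries the first region exactly onto the second.

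Second, I would assemble a dictionary of telescoping partial-sum identities coming straight from the definition of $h$. Writing $R_i(j)=\sum_{q=i}^{j}a_{iq}$, one has $h_{0j}-h_{0,j-1}=a_{0j}$, $h_{jj}-h_{j-1,j-1}=\sum_{p=0}^{j}a_{pj}$, $h_{im}-h_{i-1,m}=R_i(m)$, $h_{ij}-h_{i-1,j}=R_i(j)$, $h_{i-1,j}-h_{i-1,j-1}=\sum_{p=0}^{i-1}a_{pj}$, $h_{i,j+1}-h_{ij}=\sum_{p=0}^{i}a_{p,j+1}$, and $(h_{ij}-h_{i,j-1})-(h_{i-1,j}-h_{i-1,j-1})=a_{ij}$. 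Reading these off term by term, the three equalities \eqref{C2}, \eqref{C3}, \eqref{C4} turn into the boundary equalities (\ref{Cp1}$'$), (\ref{Cp2}$'$), (\ref{Cp3}$'$); the nonnegativity \eqref{C1} turns into the rhombus inequality (\ref{Cp4}$'$); and \eqref{C6} turns directly into (\ref{Cp6}$'$). Each of these is immediate once the corresponding identity is in hand, so they require no real work.

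The step I expect to be the crux is the inequality (\ref{Cp5}$'$), which under $\phi$ reads $R_i(j)\ge R_{i+1}(j+1)$, and whose job is to encode the lattice-word (Yamanouchi) condition. Getting the index ranges to match is the only delicate point. Reading a row of an immaculate tableau from right to left meets its entries in weakly decreasing order, so for the pair $(i,i+1)$ the tightest prefix occurs just after all the $(i+1)$'s of some row $r$ have been read but before its $i$'s; this forces the number of $i$'s in rows $1,\dots,r-1$ to be at least the number of $(i+1)$'s in rows $1,\dots,r$, i.e.\ $R_i(r-1)\ge R_{i+1}(r)$, and setting $j=r-1$ gives exactly (\ref{Cp5}$'$). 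Thus (\ref{Cp5}$'$) is precisely the condition identified in the proof of Theorem~\ref{THM:geom1} as expressing that $T$ is Yamanouchi, and one must check that its range $1\le i\le j<m$, together with the diagonal cases $i=j$ of \eqref{C1} (which give $a_{jj}\ge 0$), accounts for every binding prefix. With the dictionary complete, $\phi$ is a bijection between the two lattice-point sets, and since Theorem~\ref{THM:geom1} identifies the cardinality of the first with $C_{\alpha,\lambda}^{\beta}$, the number of integer points of the hive region (\ref{Cp1}$'$)--(\ref{Cp6}$'$) equals $C_{\alpha,\lambda}^{\beta}$ as well, which is the corollary.
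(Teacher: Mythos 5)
Your overall strategy---transport Theorem~\ref{THM:geom1} through the unimodular change of coordinates $\phi$---is exactly the route the paper takes, and most of your dictionary is correct: \eqref{C2}, \eqref{C3}, \eqref{C4}, \eqref{C1} and \eqref{C6} do correspond to (\ref{Cp1}$'$), (\ref{Cp2}$'$), (\ref{Cp3}$'$), (\ref{Cp4}$'$) and (\ref{Cp6}$'$) precisely as you say. The gap is at the step you flag as the crux. Writing $R_i(j)=\sum_{q=i}^{j}a_{iq}$ as you do, condition \eqref{C5} is $R_i(j)\ge R_{i+1}(j)$ and its image under $\phi$ is $h_{ij}-h_{i-1\,j}\ge h_{i+1\,j}-h_{ij}$, whereas (\ref{Cp5}$'$) pulls back to $R_i(j)\ge R_{i+1}(j+1)=R_{i+1}(j)+a_{i+1\,j+1}$. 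These differ by the nonnegative quantity $a_{i+1\,j+1}$, so $\phi$ carries the region \eqref{C1}--\eqref{C6} onto a region that is in general strictly \emph{larger} than (\ref{Cp1}$'$)--(\ref{Cp6}$'$); your sentence identifying (\ref{Cp5}$'$) with ``the condition identified in the proof of Theorem~\ref{THM:geom1}'' is not correct, since \eqref{C5} compares the labels $i$ and $i+1$ over the \emph{same} row range while (\ref{Cp5}$'$) compares rows $i,\dots,j$ against rows $i+1,\dots,j+1$. The discrepancy is not vacuous: for $\alpha=(1,1)$, $\nu=(2,2)$, $\beta=(2,4)$ the system \eqref{C1}--\eqref{C6} has the unique integer solution $a_{11}=a_{12}=1$, $a_{22}=2$, yet $C_{\alpha,\nu}^{\beta}=0$ (both fillings of $(2,4)/(1,1)$ with content $(2,2)$ have reading word $1221$ or $2211$, neither Yamanouchi), and the image point violates (\ref{Cp5}$'$) at $i=j=1$ since $h_{11}-h_{01}=1<2=h_{22}-h_{12}$. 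So the two lattice-point sets are not in bijection under $\phi$, and the corollary cannot be obtained simply by citing Theorem~\ref{THM:geom1}.

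The good news is that your own reading-word analysis is the correct one and already contains the repair. Because each row is read right to left, the binding prefix for the pair $(i,i+1)$ ends after the $(i+1)$'s of row $r$ but before its $i$'s, which forces $R_i(r-1)\ge R_{i+1}(r)$---exactly (\ref{Cp5}$'$), not the image of \eqref{C5}. So instead of routing through Theorem~\ref{THM:geom1}, prove the corollary directly: use $\phi^{-1}$ and your dictionary to show that integer points of (\ref{Cp1}$'$)--(\ref{Cp6}$'$) correspond to fillings of $\beta/\alpha$ of content $\nu$ that are weakly increasing along rows ((\ref{Cp4}$'$) gives $a_{ij}\ge0$ for $i<j$, and the diagonal values satisfy $a_{jj}\ge a_{j+1\,j+1}\ge\cdots\ge a_{mm}=\nu_m\ge0$ by chaining (\ref{Cp5}$'$) with (\ref{Cp3}$'$)), strictly increasing in the first column (condition (\ref{Cp6}$'$)), and Yamanouchi (condition (\ref{Cp5}$'$) via your prefix argument). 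That self-contained bijection establishes the corollary without any appeal to \eqref{C5}.
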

 
The polytope defined by the equations (\ref{Cp1}$'$)--(\ref{Cp6}$'$)
is an analogue of the hive polytope of Knutson and Tao \cite{KnTao}.
 
\begin{Example}
The two Yamanouchi immaculate tableaux from Example \ref{ex:smallex} and \ref{ex:smallarray} have the following hive arrays.
$$
\begin{array}{ccccccc}
&&&0&&&\\
&&1&&3&&\\
&3&&6&&7&\\
3&&6&&8&&9\\
\end{array}
\hskip .3in
\begin{array}{ccccccc}
&&&0&&&\\
&&1&&3&&\\
&3&&5&&7&\\
3&&6&&8&&9\\
\end{array}
$$
\end{Example}
 
\begin{Example} The tableau $T$ from Example \ref{ex:largerex} has the following hive array.
$$\begin{array}{ccccccccccc}
&&&&&0&&&&&\\
&&&&3&&3&&&&\\
&&&5&&8&&8&&&\\
&&8&&12&&13&&13&&\\
&9&&10&&16&&18&&18&\\
9&&10&&16&&19&&20&&20\\
\end{array}$$
\end{Example}


\end{document}